\newcommand{\nero}{\smallskip$\bullet\quad$}
\newcommand{\h}{\mathbb H}
\newcommand{\R}{\mathbb R}
\newtheorem{theorem}{Theorem} [section]
\newtheorem{lemma}{Lemma} [section]
\newtheorem{proposition}{Proposition} [section]
\newtheorem{corollary}{Corollary} [section]
\newtheorem{remark}{Remark}[section]
\newcommand{\mnote}[1]
{\protect{\stepcounter{mnotecount}}$^{\mbox{\footnotesize
$
\bullet$\themnotecount}}$ \marginpar{
\raggedright\tiny\em $\!\!\!\!\!\!\,\bullet$\themnotecount: #1} }
\newcounter{mnotecount}[section]
\begin{document}

  \title{ Caccioppoli's inequalities on constant mean curvature hypersurfaces in Riemannian manifolds}

\author{S. Ilias, B. Nelli, M.Soret }
\date{ }
\maketitle

\begin{abstract} 
We prove some Caccioppoli's inequalities for the traceless part of the second fundamental form of a complete, noncompact, finite index, constant mean curvature hypersurface of a Riemannian manifold, satisfying some curvature conditions. This allows us to unify and clarify many results 
scattered 
in the literature and to obtain some new results. For example, we prove that there is no stable,  complete, noncompact hypersurface in ${\mathbb R}^{n+1},$ $n\leq 5,$ with constant mean curvature $H\not=0,$ provided  that, for suitable $p,$ the  $L^p$-norm of the traceless part of second fundamental form satisfies some growth condition.

\noindent
{\it Keywords.} Caccioppoli's inequality, Simons' equation,  constant mean curvature hypersurfaces, stable, finite
index.

\noindent
{\it M.S.C.  2010.} 53A10, 53C40, 53C42, 58E15.
\end{abstract}

\section{Introduction}
\label{introduction}

In this article,  we give a general setting that unifies  and clarifies  the proofs of many results present in the literature  on nonexistence of stable constant mean curvature hypersurfaces. We obtain some new results in the subject as well. The key result of this article is the following generalization of a result of R. Schoen, L. Simon, S.T. Yau \cite{SSY}.
Throughout the article,  ${\mathcal N}$ is  an  orientable Riemannian manifold with bounded sectional curvature  and such that the norm of the derivative of the curvature tensor is bounded. 

\ 

{\bf Theorem} (see Theorem \ref{general-ineq-theo}).
{\it Let $M$ be a complete, noncompact  hypersurface with constant mean curvature $H$ and finite index, immersed in  $\mathcal N.$ 
Denote  by $\varphi$ the norm of the traceless part of the second fundamental form of $M.$  
Then, there exists a compact subset $K$ of $M$, such that for any $q>0$ and  for any $f\in C_0^{\infty}(M\setminus K)$  one has 

\begin{align*}
\int_{M\setminus K} f^{2}\varphi^{2q+2}&({\mathcal A}\varphi^{2}+{\mathcal B}H\,\varphi+{\mathcal C}H^{2}+{\mathcal E})\notag \\ 
&\le {\mathcal D} \int_{M\setminus K}\varphi^{2q+2}|\nabla f|^{2}+{\mathcal F}\int_{M\setminus K} f^{2} \varphi^{2q+1}+ 
{\mathcal G} \int_{M\setminus K} f^{2} \varphi^{2q}
\end{align*}

where the constants are as in Theorem \ref{general-ineq-theo}. Moreover, if $M$ is stable, $K=\emptyset.$ 
}

\

The inequality of the previous Theorem is a Caccioppoli's inequality for finite index hypersurfaces with constant mean curvature.  In order to get it, we first prove  
a Simons' inequality for constant mean  curvature hypersurfaces of a Riemannian manifold (Theorem \ref{simon-gen-theo}).
Beyond their own  interest,  Caccioppoli's inequalities  are useful to deduce nonexistence results  for stable constant mean curvature hypersurfaces in space forms, under some restrictions on the dimension of the space  and on the growth of the $L^p$ norm of the traceless part of the second fundamental form, for suitable $p.$ For example,  we prove that there is no stable,  complete, noncompact hypersurface in ${\mathbb R}^{n+1},$ $n\leq 5,$ with constant mean curvature $H\not=0,$ provided  that, for suitable $p,$ the  $L^p$-norm of the traceless second fundamental form satisfies some growth condition (Corollary \ref{coro-docarmo-peng-H}). This is an answer to a do Carmo's question in a particular case (see pg. 133 in \cite{Do}).

It is worthwhile to point out that the analogous of the previous Theorem  can be proved in the setting of $\delta$-stable hypersurfaces, and for hypersurfaces with constant $H_r$-curvature, that is the $r$-th symmetric function of the principal curvatures (in \cite{ASZ} and \cite{ASZ1} one can find some related results).  Moreover, we restrict ourselves to hypersurfaces, but, in fact, the previous Theorem  can be easily adapted to submanifolds of any codimension with parallel mean curvature. 

We analyze  the   consequences of the previous Theorem in 
 the case where the ambient manifold has constant  curvature. In a forthcoming article we will analyze the case where  
the ambient space is 
either a product  or a warped product of constant  curvature manifolds (see \cite{BE} for related results in the minimal case). 

As the  proof of  the previous Theorem relies on a Simons' inequality in this setting, 
we recall some literature about the classical Simons' inequality. 
In the pioneer  paper  \cite{Si}, J. Simons proved an identity for the Laplacian of the norm of the second fundamental form of a minimal submanifold of the Euclidean space. Such identity is known as {\em Simons' formula.} Using  Simons' formula and some more work on minimal cones, J. Simons was able to deduce that, for $n\leq 7,$ the only entire solutions of the minimal surface equation in ${\mathbb R}^n$ are linear functions. Concerning the  restriction on the dimension in the result 
of Simons, we recall that E. Bombieri, E. De Giorgi and E. Giusti  \cite{BGG} proved that, for $n>7,$ there are entire solutions of the minimal surface equation that are not linear.  Then, Bernstein's question \cite{Bern} was completely answered. 

Later, there has been  a lot of work about Simons' formula. In \cite{SSY}, R. Schoen, L. Simon, S.T. Yau generalized Simons' formula to an inequality
(known as {\em Simons' inequality}) for minimal hypersurfaces in a  Riemannian manifold. Then, they applied  Simons' inequality in order to prove 
an estimate for the $L^p$ norm of the second fundamental form of a stable minimal hypersurface in a  Riemannian manifold, for a suitable 
$p$ (see Theorem 1 in \cite{SSY}). 
Among many interesting consequences of the $L^p$ estimate in \cite{SSY}, we point out the following one: there is no nontotally geodesic, area minimizing hypersurfaces of dimension $n\leq 5,$ in a flat Riemannian manifold.

Some authors proved generalizations of  Simons' inequality. We quote two important works in this direction : the article of
P. B\'erard,  \cite{B}, where the author deduced a very general  Simons' identity satisfied by the second fundamental form of a   submanifold of arbitrary codimension of a  Riemannian manifold and the article of K. Ecker and G. Huisken \cite{EH}, where a general Simons inequality is obtained for hypersurfaces of the  Euclidean space.

Our article is organized as follows.

 In Section \ref{stability}, we give some generalities about stability. In particular,  we   clarify the relation 
 between the different notions of index and stability. 

In Section \ref{simons-formula}, we obtain a Simons' inequality for the traceless part of the second  fundamental form of a constant mean curvature hypersurface in a Riemannian manifold 
(see Theorem \ref{simon-inequality}). We deduce it from the very general formula obtained by P. B\'erard in \cite{B1}, \cite{B}. 
Our computations are strongly inspired by  those of R. Schoen, L. Simon, S.T. Yau \cite{SSY}.  We give them, because, to our knowledge they are not present  in the literature, except for hypersurfaces in space forms \cite{AD1}, \cite{EH}.
 
 Then, we evaluate the terms of Simons' inequality depending on the curvature of the ambient space, in order to obtain a handier inequality (Theorem \ref{simon-gen-theo}). 
 
 In Section \ref{SSY-section}, we prove the Theorem that we stated at the beginning of the Introduction (Theorem \ref{general-ineq-theo}). 
 
 In Section \ref{Caccioppoli}, we deduce, from  Theorem \ref{general-ineq-theo}, three different kinds of Caccioppoli's inequalities on a constant mean curvature hypersurface with finite index.   The first one is the analogous, for finite index constant mean curvature hypersurfaces, of  the inequality obtained by R. Schoen, L. Simon, S. T. Yau (see Theorem 1 in \cite{SSY}).  The  second one is different in nature because it involves the gradient of the norm of the second fundamental form of the hypersurface. The third one is obtained by a careful study of the sign of the coefficients of the inequality of  Theorem  \ref{general-ineq-theo}, in the case where the ambient space has constant  curvature.  Later on,  we obtain some refinements of  Caccioppoli's inequality of third type, that will be useful for the applications.
 
 In Section \ref{applications}, we  show how our Cacioppoli's inequalities can be used to obtain some nonexistence results for stable constant mean curvature hypersurfaces. In many cases, we recover the known results present in the literature.

\section{ Stability notions and finite index hypersurfaces}
\label{stability}


The following assumptions will be maintained throughout this article.
  Let ${\mathcal N}$ be an  orientable Riemannian manifold and let  $M$  be an orientable  hypersurface  immersed in ${\mathcal N}.$ Assume that $M$ 
has  constant mean curvature. When the mean curvature is non zero,  we  orient $M$ by its mean curvature vector $\vec H.$ Then $\vec H=H\vec \nu$ and $H$ is positive.    When the mean curvature is zero, we choose, once for all, an orientation $\vec\nu$ on $M.$


  We introduce the {\em stability 
operator} $L,$ defined on  smooth functions  with compact support  in $M,$ that is  $L:=\Delta+Ric(\nu,\nu)+|A|^2,$ where $\Delta=tr\circ Hess$  and $A$ is the shape operator on $M.$

Let $\Omega$ be a relatively compact domain of $M.$ We denote by ${\rm Index}(\Omega)$ (respectively ${\rm WIndex}(\Omega)$) the number of negative eigenvalues of the operator  $-L,$ for the Dirichlet problem on $\Omega$ 

$$-Lf=\lambda f,\ \   f_{|\partial\Omega}=0$$
$$  ({\rm respectively}  \  -Lf=\lambda f,\   f_{|\partial\Omega}=0,\  \int_{\Omega}f=0).$$ 
The ${\rm Index}(M)$ (respectively ${\rm WIndex}(M)$) is defined as follows

$${\rm Index}(M):=\sup\{ {\rm Index}(\Omega) \ | \ \Omega\subset M \ {\rm rel.\ comp.}\}$$
$$({\rm respectively} \ {\rm WIndex}(M):=\sup\{ {\rm WIndex}(\Omega) \ | \ \Omega\subset M \ {\rm rel.\ comp.}\})$$

It is easy to see that $-\int_M fL(f)$ is the second derivative of the volume in the direction of $f\nu$ (see \cite{BDE}),   then ${\rm Index}(M)$  (respectively ${\rm WIndex}(M)$)  measures the  number of linearly  independent normal deformations with compact support of $M,$ decreasing  area
(respectively decreasing area, leaving fixed a volume). 
When $H=0,$ one can drop the condition $\int_{M}f=0,$ then, for a minimal  hypersurface, one considers only the ${\rm Index}(M).$  

The hypersurface $M$ is called {\em stable} (respectively {\em weakly stable}) if ${\rm Index}(M)=0$ (respectively  ${\rm WIndex}(M)=0$). This means that 

\begin{equation*}
Q(f,f):=-\int_MfL(f)\geq0,\ \ \forall f\in C_0^{\infty}(M)\ \ ({\rm respectively}\ \forall f\in C_0^{\infty}(M)\ \int_Mf=0).
\end{equation*}

It is proved in \cite{BB} that ${\rm Index}(M)$  is finite if and only if  ${\rm WIndex}(M)$  is finite. So,
when  we assume finite index, we
are referring to either of the indexes without distinction. 
Let us give some relations between stability and finite index.

\begin{proposition}
\label{stability-index-compact} Let $M$ be  a complete, noncompact constant mean curvature hypersurface in $\mathcal N.$ 

(1) If  $M$ is   weakly stable, then there exists a compact subset $K$ in $M$ such that  $M\setminus K$ is  stable.

(2) The hypersurface $M$ has finite index if and only if  there exists a compact subset $K$ in $M$ such that  
$M\setminus K$ is  stable.

\end{proposition}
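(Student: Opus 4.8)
The plan is to treat part (2) as the core of the statement and to deduce part (1) from it. I begin with the easier implication of (2), that finite index forces stability outside a compact set. Since $\mathrm{Index}(M)=\sup_\Omega\mathrm{Index}(\Omega)$ is a nondecreasing supremum along an exhaustion, equal to a finite number $k$, I fix a relatively compact domain $\Omega_0$ with $\mathrm{Index}(\Omega_0)=k$ and let $V=\mathrm{span}(f_1,\dots,f_k)\subset C_0^\infty(\Omega_0)$ be the span of the Dirichlet eigenfunctions of $-L$ with negative eigenvalue, so that $Q$ is negative definite on $V$. Set $K=\overline{\Omega_0}$. The key observation is that if $g\in C_0^\infty(M\setminus K)$ then $\mathrm{supp}\,g\cap\mathrm{supp}\,f_i=\emptyset$, whence $Q(g,f_i)=\int_M\nabla g\cdot\nabla f_i-(Ric(\nu,\nu)+|A|^2)g f_i=0$ for every $i$. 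If some such $g$ had $Q(g,g)<0$, then $Q$ would be negative definite on the $(k+1)$-dimensional space $V\oplus\langle g\rangle$ (block diagonal, since $g\perp_Q V$), a space contained in $C_0^\infty(\Omega_0\cup\mathrm{supp}\,g)$; this contradicts $\mathrm{Index}(M)=k$. Hence $Q(g,g)\ge 0$ for all $g\in C_0^\infty(M\setminus K)$, i.e. $M\setminus K$ is stable.

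For the converse implication of (2), suppose $M\setminus K$ is stable. I would show that the bottom of the essential spectrum of $-L$ is nonnegative. Choose cutoffs $\eta_0,\eta_1\in C^\infty(M)$ with $\eta_0^2+\eta_1^2=1$, $\eta_0\equiv 1$ near $K$ and compactly supported, and $\eta_1\equiv 1$ outside a compact neighborhood of $K$. The localization (IMS) identity gives, for every $f\in C_0^\infty(M)$,
\[ Q(f,f)=Q(\eta_0 f,\eta_0 f)+Q(\eta_1 f,\eta_1 f)-\int_M\big(|\nabla\eta_0|^2+|\nabla\eta_1|^2\big)f^2. \]
Since $\eta_1 f$ is supported in $M\setminus K$, stability yields $Q(\eta_1 f,\eta_1 f)\ge 0$, while the last integrand is bounded and compactly supported; in particular $Q(f,f)\ge 0$ whenever $\eta_0 f\equiv 0$. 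Combining this with Persson's characterization
\[ \inf\sigma_{\mathrm{ess}}(-L)=\sup_{\Omega}\inf\Big\{\frac{Q(f,f)}{\int_M f^2}\ :\ f\in C_0^\infty(M\setminus\overline{\Omega}),\ f\neq 0\Big\}, \]
I obtain $\inf\sigma_{\mathrm{ess}}(-L)\ge 0$. Consequently $-L$ has only finitely many eigenvalues below $0$, and since $\mathrm{Index}(M)$ equals the number of such negative eigenvalues, $M$ has finite index.

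Part (1) then follows from (2). If $M$ is weakly stable I claim $\mathrm{Index}(M)\le 1$: were there a $2$-dimensional space $V\subset C_0^\infty(M)$ on which $Q$ is negative definite, the single linear constraint $f\mapsto\int_M f$ would have a nontrivial kernel in $V$, producing $0\neq g\in V$ with $\int_M g=0$ and $Q(g,g)<0$, contradicting weak stability. Hence $M$ has finite index, and the implication of (2) already established provides a compact $K$ with $M\setminus K$ stable.

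The main obstacle is the converse implication of (2), namely upgrading pointwise stability on $M\setminus K$ to genuine finiteness of the index. The difficulty is that a naive splitting $f=\psi f+(1-\psi)f$ produces cross terms $Q(\psi f,(1-\psi)f)$ of indefinite sign; the quadratic partition $\eta_0^2+\eta_1^2=1$ is exactly what removes them, after which the control of the essential spectrum (equivalently, a uniform bound on $\mathrm{Index}(\Omega)$ along an exhaustion) finishes the argument. One must also check that the standing curvature bounds on $\mathcal N$ make $Ric(\nu,\nu)+|A|^2$ a well-behaved Schr\"odinger potential, so that the spectral-theoretic statements apply; this is where the hypotheses on $\mathcal N$ enter.
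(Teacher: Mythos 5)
Your part (1) and the forward half of part (2) are correct. The forward half (finite index $\Rightarrow$ stability outside a compact set), via the span of the negative Dirichlet eigenfunctions on a domain realizing the index, disjointness of supports, and min--max, is essentially the Fischer--Colbrie argument that the paper itself invokes (Proposition 1 of \cite{FC}); the only cosmetic point is that Dirichlet eigenfunctions lie in $H_0^1(\Omega_0)$ rather than $C_0^\infty(\Omega_0)$, which a routine density argument fixes. Your derivation of (1) --- weak stability forces ${\rm Index}(M)\le 1$ because a two-dimensional negative subspace would meet the hyperplane $\{\int_M f=0\}$ nontrivially, then apply the forward half of (2) --- is a valid alternative to the paper's direct and more elementary proof, which takes $K={\rm supp}(f)$ for a single destabilizing $f$ and tests weak stability with $h=\alpha f-\beta g$.

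The converse half of (2), however, has a genuine gap. From stability of $M\setminus K$, Persson's formula does give $\inf\sigma_{\mathrm{ess}}(-L)\ge 0$, but your next step --- ``consequently $-L$ has only finitely many eigenvalues below $0$'' --- is not a valid spectral implication: when $\inf\sigma_{\mathrm{ess}}(-L)=0$, the discrete negative eigenvalues may accumulate at $0$ from below, in which case the rank of the spectral projection onto $(-\infty,0)$, i.e. ${\rm Index}(M)$, is infinite. The hydrogen-atom operator $-\Delta-|x|^{-1}$ on $\mathbb{R}^3$ exhibits exactly this: essential spectrum $[0,\infty)$, yet infinitely many negative eigenvalues (so it is a counterexample to your inference, though not to the Proposition, since that operator is not nonnegative outside any compact set). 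This also refutes the parenthetical claim in your last paragraph that control of the essential spectrum is ``equivalent'' to a uniform bound on ${\rm Index}(\Omega)$ along an exhaustion: the two are not equivalent, and their difference is precisely the gap. What has to be shown is that the strictly stronger hypothesis $Q\ge 0$ on all of $C_0^\infty(M\setminus K)$ --- as opposed to the Persson-type statement that for every $\varepsilon>0$ one has $Q\ge -\varepsilon\|\cdot\|^2_{L^2}$ outside some compact set --- excludes such accumulation at $0^-$; this requires the local regularity of the potential ${\rm Ric}(\nu,\nu)+|A|^2$ and is a nontrivial theorem. This is exactly why the paper does not argue along your lines but instead quotes B. Devyver (Theorem 1.2 in \cite{De}) for this implication. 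Your IMS decomposition cannot close the argument either, because the localization error $\int_M\bigl(|\nabla\eta_0|^2+|\nabla\eta_1|^2\bigr)f^2$ lives on a compact annulus where $\eta_1 f$ does not vanish, so stability of $M\setminus K$ applied to $\eta_1 f$ does not absorb it.
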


\begin{proof} (1)   If $M$ is stable, we choose $K=\emptyset$ and (1) is proved. Assume $M$ is not stable, then there exists 
$f\in C^{\infty}_0(M)$ such that $Q(f,f)<0.$ Let $K=supp(f),$ we will prove that $M\setminus K$ is stable, i.e. 
for any $g\in C^{\infty}_0(M\setminus K),$ one has $Q(g,g)\geq 0.$ Denote by $\alpha=\int_M g$ and $\beta=\int_M f$ and define $h:=\alpha f-\beta g.$ By 
a straightforward computation, one has that $\int_M h=0.$ As $M$ is weakly stable, one has $Q(h,h)\geq 0.$ 
As $supp(f)\cap supp(g)=\emptyset,$ 
using  the bi-linearity of $Q$ one has
\begin{equation}
\label{bilinear}
0\leq Q(h,h)=\alpha^2 Q(f,f)+\beta^2 Q(g,g).
\end{equation}

As $Q(f,f)<0,$ inequality \eqref{bilinear} implies that $\beta\not=0$ and $Q(g,g)\geq 0.$ Hence $M\setminus K$ is stable.

(2) Assume that $M$ has finite index, then,  by a proof similar to the proof of Proposition 1 in \cite{FC}, one obtains that $M\setminus K$ is stable for a suitable compact subset $K.$ The vice versa is proved  by B. Devyver (see  Theorem 1.2 in \cite{De}).
\end{proof}

In the literature there are interesting relations between the stability  of a minimal hypersurface $M$  in $\mathbb R^{n+1}$ and the finiteness of $\int_M |A|^n.$ 
Let us recall some of them. 
P. Berard  \cite{B1} proved that  a complete stable minimal hypersurface of $\mathbb R^{n+1},$ $n\leq 5$ such that  $\int_M|A|^n<\infty,$ must be
a  hyperplane. 
In \cite{SZ}, Y.B. Shen and  X.H. Zhu stated that the previous result holds for any $n$  but in the proof, they use  an unpublished result by Anderson.
P. B\'erard, M. do Carmo and W. Santos  \cite{BDS} proved that,  if $M$ is a complete  hypersurface in 
$\h^{n+1},$ with constant mean curvature $H,$ $H^2<1,$ such that $\int_M|A-HI|^n<\infty,$ then $M$ has finite index. 
Notice that the converse is not true, as it is showed  by the examples by A. da Silveira \cite{S}.
 In \cite{Sp}, J. Spruck  proved that, if $M$ is a minimal submanifold of dimension $m$ of ${\mathbb R}^{n+1}$ such that   $\int_M |A|^m$  is small, then $M$  is stable 
(the definition of stability can be easily  extended to submanifolds of arbitrary codimension). In the same spirit of \cite{Sp}, one can prove the following 
well known result (we give a proof because we were not able to find one in the literature).

\begin{proposition}
 \label{bounded-finite}
Let $M$ be a complete minimal hypersurface  in ${\mathbb R}^{n+1},$  such that 
$\int_M |A|^n< \infty.$  Then  $M$ has finite index.

\end{proposition}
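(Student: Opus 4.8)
The plan is to invoke the characterization of finite index from Proposition \ref{stability-index-compact}(2): it suffices to produce a compact set $K\subset M$ such that $M\setminus K$ is stable. Since $M$ is minimal in the flat ambient ${\mathbb R}^{n+1}$, the curvature term $Ric(\nu,\nu)$ vanishes and the stability operator reduces to $L=\Delta+|A|^{2}$, so that after integrating by parts the quadratic form acting on $f\in C_0^{\infty}(M\setminus K)$ becomes
$$Q(f,f)=\int_M|\nabla f|^{2}-\int_M|A|^{2}f^{2}.$$
Thus the whole task is to show that, for a suitable $K$, the curvature term is strictly dominated by the Dirichlet term for every nonzero test function supported off $K$.

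First I would recall the Michael--Simon Sobolev inequality for minimal submanifolds of Euclidean space: there is a constant $c_n$ depending only on $n$ with $\big(\int_M|u|^{n/(n-1)}\big)^{(n-1)/n}\le c_n\int_M|\nabla u|$ for all $u\in C_0^{\infty}(M)$. Applying this to $u=|f|^{2(n-1)/(n-2)}$ and then using the Cauchy--Schwarz inequality on the resulting right-hand side yields the $L^{2}$-Sobolev inequality
$$\Big(\int_M|f|^{\frac{2n}{n-2}}\Big)^{\frac{n-2}{n}}\le C_S\int_M|\nabla f|^{2},$$
with $C_S=C_S(n)$ independent of $f$. This is exactly where the dimension enters: the exponent $\tfrac{2n}{n-2}$ (and the conjugate pair $\tfrac n2,\tfrac{n}{n-2}$ used below) forces $n\ge 3$, so the borderline case $n=2$, which is the classical finite-total-curvature situation, would have to be treated separately, while $n=1$ is trivial.

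The heart of the argument is a Hölder estimate coupling the curvature term to this Sobolev inequality. Writing $\int_M|A|^{2}f^{2}=\int_{M\setminus K}|A|^{2}f^{2}$ (as $f$ is supported off $K$) and applying Hölder with conjugate exponents $\tfrac n2$ and $\tfrac{n}{n-2}$ gives
$$\int_M|A|^{2}f^{2}\le\Big(\int_{M\setminus K}|A|^{n}\Big)^{\frac2n}\Big(\int_M|f|^{\frac{2n}{n-2}}\Big)^{\frac{n-2}{n}}\le C_S\Big(\int_{M\setminus K}|A|^{n}\Big)^{\frac2n}\int_M|\nabla f|^{2},$$
valid for every $f\in C_0^{\infty}(M\setminus K)$. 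Because $\int_M|A|^{n}<\infty$, the tail $\int_{M\setminus K}|A|^{n}$ can be made arbitrarily small by enlarging the compact set $K$; choosing $K$ so that $C_S\big(\int_{M\setminus K}|A|^{n}\big)^{2/n}<1$ forces $Q(f,f)>0$ for all nonzero $f\in C_0^{\infty}(M\setminus K)$. Hence $M\setminus K$ is stable, and Proposition \ref{stability-index-compact}(2) delivers the finite index.

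I expect the main obstacle to be essentially technical rather than conceptual: verifying that $C_S$ in the $L^{2}$-Sobolev inequality is genuinely independent of $f$ (so that tail smallness produces a coefficient strictly below $1$ \emph{uniformly} over all admissible test functions), and correctly bookkeeping the exponents as one passes from the Michael--Simon inequality to the $L^{2}$ form. Once these are in place, the key phenomenon—that finiteness of $\int_M|A|^{n}$ confines all the instability of $L$ to a compact region—follows at once.
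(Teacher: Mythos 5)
Your proof is correct and is essentially the paper's own argument: the paper likewise uses the finiteness of $\int_M|A|^n$ to choose a compact $K$ with $\int_{M\setminus K}|A|^n$ below a threshold set by the Sobolev constant, concludes that $M\setminus K$ is stable, and finishes with Proposition \ref{stability-index-compact}(2). The only difference is that the paper cites Theorem 2 of \cite{Sp} as a black box for the stability step, whereas you reprove that criterion via the Michael--Simon inequality plus H\"older, which is precisely Spruck's argument (and carries the same implicit restriction $n\geq 3$ that you yourself flag).
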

\begin{proof} 
As $M$ has infinite volume, the hypothesis yields that there exists a compact subset $K$ of $M$ depending on the Sobolev constant $c(n)$ such that 
inequality 
\begin{equation}
\label{small-curvature}
\int_{M\setminus K} |A|^n\leq {c(n)}^{-\frac{n}{2}}
\end{equation}

is satisfied. Then, one apply Theorem 2 in \cite{Sp}, to obtain that $M\setminus K$ is stable. Then, by (2) of Proposition 
\ref{stability-index-compact}, one obtains that 
$M$ has finite index.
\end{proof}

\begin{remark}
The proof of  Proposition \ref{bounded-finite}, can be  easily adapted to the case of an ambient manifold which  
is simply connected and of   curvature bounded from above by a nonpositive constant (see \cite{Fr} for the infinite volume argument).
\end{remark}

The idea of Spruck's proof of Theorem 2 in \cite{Sp} allows us to prove that if  $\int_MH^n$  small, without any further assumption on $M,$ one has a  lower bound on the volume of balls in $M.$

\begin{proposition}
\label{area}
Let $M$ be a complete submanifold  of dimension $n$ in a simply connected manifold  with   curvature bounded from above by a negative constant.  Denote by $H$ the mean curvature of $M$ and assume that 
$\left(\int_M|H|^n\right)^{\frac{1}{n}}\leq\frac{1}{2c},$ where $c$  is  a constant depending only on $n$ and on the bound on the curvature of the ambient space.
Then,  for any $R$ 

$$|B_R|\geq\frac{1}{(2cn)^n}R^n$$

where $|B_R|$ is  the volume of the  geodesic ball in $M$ of radius $R.$
\end{proposition}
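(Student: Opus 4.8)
The plan is to run the argument behind Spruck's Theorem 2 in \cite{Sp}, turning the smallness of $\int_M|H|^n$ into a differential inequality for the volume function. Fix $p\in M$, let $\rho$ be the intrinsic distance to $p$, and write $V(r):=|B_r|$ for the volume of the geodesic ball of radius $r$. Since the ambient manifold is simply connected with sectional curvature bounded above by a negative constant, the Michael–Simon type Sobolev inequality for submanifolds (the tool underlying \cite{Sp}) holds globally: there is a constant $c$, depending only on $n$ and the curvature bound, such that
$$\left(\int_M|f|^{\frac{n}{n-1}}\right)^{\frac{n-1}{n}}\le c\int_M\big(|\nabla f|+|H|\,|f|\big),\qquad f\in C_0^{\infty}(M).$$
This $c$ is the constant appearing in the statement.

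First I would insert into this inequality the family of Lipschitz cutoffs $f_{\epsilon}:=\max\{0,\,1-\tfrac{1}{\epsilon}(\rho-r)\}$, which equal $1$ on $B_r$ and are supported in $B_{r+\epsilon}$. Because $|\nabla\rho|=1$ almost everywhere, the coarea formula gives $\int_M|\nabla f_{\epsilon}|=\tfrac{1}{\epsilon}\big(V(r+\epsilon)-V(r)\big)$, while $\int_M f_{\epsilon}^{\frac{n}{n-1}}\to V(r)$ and $\int_M f_{\epsilon}|H|\to\int_{B_r}|H|$. Letting $\epsilon\to0$ at a point where $V$ is differentiable (a monotone function is differentiable a.e., with $V'(r)=|\partial B_r|$ a.e.\ by coarea) yields $V(r)^{\frac{n-1}{n}}\le c\,V'(r)+c\int_{B_r}|H|$. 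Now Hölder's inequality bounds $\int_{B_r}|H|\le\big(\int_M|H|^n\big)^{1/n}V(r)^{\frac{n-1}{n}}$, and the hypothesis $\big(\int_M|H|^n\big)^{1/n}\le\tfrac{1}{2c}$ lets me absorb this term on the left, so that
$$\tfrac12\,V(r)^{\frac{n-1}{n}}\le c\,V'(r),\qquad\text{equivalently}\qquad \frac{d}{dr}\,V(r)^{1/n}=\frac{V'(r)}{n\,V(r)^{\frac{n-1}{n}}}\ge\frac{1}{2cn}.$$

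Finally I would integrate this differential inequality from $0$ to $R$; since $V(0)=0$ this gives $V(R)^{1/n}\ge R/(2cn)$, that is $|B_R|\ge R^n/(2cn)^n$, as claimed. The elementary ODE comparison is routine. The one place where care is genuinely needed is the approximation/coarea step: the distance function $\rho$ is only Lipschitz away from the cut locus, so I must rely on $|\nabla\rho|=1$ a.e.\ and on the absolute continuity of $r\mapsto V(r)=\int_0^r|\partial B_t|\,dt$ to pass to the limit $\epsilon\to0$ at a.e.\ $r$ and thereby produce the pointwise differential inequality $V'(r)\ge\tfrac{1}{2c}V(r)^{\frac{n-1}{n}}$. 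I expect this bookkeeping, rather than any of the inequalities themselves, to be the main technical obstacle.
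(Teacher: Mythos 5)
Your proof is correct and follows essentially the same route as the paper's: the Hoffman--Spruck inequality applied to geodesic balls, H\"older's inequality to bound $\int_{B_r}|H|$, absorption of that term using the smallness hypothesis, and integration of the resulting differential inequality $V(r)^{\frac{n-1}{n}}\le 2c\,V'(r)$. The only cosmetic difference is that the paper quotes the isoperimetric form of the Hoffman--Spruck inequality directly, whereas you rederive it from the functional Sobolev inequality via cutoffs and the coarea formula.
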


\begin{proof}
Applying the isoperimetric inequality of D. Hoffman and J. Spruck \cite{HS} we obtain
\begin{equation}
\label{iso-sobolev}
|B_R|^{\frac{n-1}{n}}\leq c\left[|\partial B_R|+\int_{B_R}|H|\right].
\end{equation}

where $c$  is  a constant depending only on $n$ and on the bound on the curvature of the ambient space. 
By H\"older inequality one has

\begin{equation}\label{holder-sobolev}
\int_{B_R} |H|\leq  |B_R|^{\frac{n-1}{n}}\left(\int_{B_R} |H|^n\right)^{\frac{1}{n}}.
\end{equation}

Replacing \eqref{holder-sobolev} in \eqref{iso-sobolev} yields

\begin{equation}
\label{sobolev-2}
|B_R|^{\frac{n-1}{n}}(1-c\left(\int_{B_R} |H|^n\right)^{\frac{1}{n}})\leq c|\partial B_R|.
\end{equation}

By hypothesis, \eqref{sobolev-2} gives

 \begin{equation}\label{sobolev-3}
|B_R|^{\frac{n-1}{n}}\leq2c |\partial B_R|=2c\frac{d |B_R|}{d R}.
\end{equation}

By integrating inequality \eqref{sobolev-3} one has the result. 

\end{proof}

\begin{remark} It is clear that any minimal complete submanifold satisfies the assumptions of the Proposition \ref{area}.
\end{remark}



 \section{Simons'  inequality  for  constant mean curvature hypersurfaces}
 \label{simons-formula}
 
The following notations (that closely follow those of J. Simons \cite{Si} and  P. B\'erard \cite{B}) will be maintained throughout the article. 
Assume that $\mathcal N,$ $M,$ $H$  and $\nu$  are as at the beginning of Section \ref{stability}.
We denote by  $\overline \nabla$  the Levi-Civita connection in ${\mathcal N}$  and  by $R$ (respectively  $Ric$) the curvature tensor  (respectively  Ricci curvature)  of $\mathcal N.$ 
 We denote   
by $\nabla$ the connection on $M$ for the induced metric $g.$
   Let $A$ (respectively $B$) be the shape operator (respectively the  second fundamental form)  of $M,$ i.e. 
$\langle A(x), y\rangle=\langle \nu, B(x,y)\rangle$ for any  $x,y$ in $TM$. Furthermore, if $w$ is any  vector field normal to $M,$
 denote by $A^w$ the tensor field defined by $\langle A^w(x), y\rangle=\langle w, B(x,y)\rangle$ (note that with this notation, $A=A^{\nu}$).
Denote by $\Phi$ the traceless part of the second fundamental form, defined by $\Phi=B- H\nu g$ and   by $\phi$ the endomorphim of $TM$ 
associated $\Phi,$ i.e.   $\langle \phi(x), y\rangle=\langle \nu, \Phi(x,y)\rangle,$ for any $x,y$ in $TM.$
Finally, let $\nabla^{2}$ the rough Laplacian of the normal bundle, associated to the connection  $ \nabla,$ defined by 
$\nabla^{2}=\sum_{1}^{n} \nabla_{e_{i}}\nabla_{e_{i}}-\nabla_{\nabla_{e_{i}}e_{i}},$  
where $\{e_{i}\}$ is a local orthonormal frame in a neighborhood of a point of $M$.

From now on, the ambient manifold $\mathcal N$ will satisfy the following assumptions. 
We denote by $sec(X,Y)$ the sectional curvature of ${\mathcal N}$ for the two plane generated by $X,Y\in T{\mathcal N}.$ We assume that   $K_1$ and $K_2$ are an upper bound and a lower bound of the sectional curvatures,  i.e. for any $X,$ $Y$ $\in T{\mathcal N},$ $K_2\leq sec(X,Y)\leq K_1.$
Furthermore we assume that the derivative of the curvature tensor is bounded, that is,  there exists a constant $K'$ such that, for any elements  $e_{i},e_{j},e_{k},e_{s},e_{t}$ of a local orthonormal frame, one has  $\sqrt{\sum_{ijkst}\langle\left(\nabla_{e_t}R\right)(e_i,e_j)e_k,e_s}\rangle^2 \le K'.$

We will compute   $\langle\nabla^{2}\Phi,\Phi\rangle:=\sum_{1}^{n} \langle\nabla^2\Phi(e_i,e_j),\Phi(e_i,e_j)\rangle $  (where  $\{e_{i}\}$  is a local  orthonormal frame),  in terms of $|\phi|,\, H,\,  K_{1},\, K_{2} \;$  and $K'$ in order to obtain the following  Simon's type inequality.

\begin{theorem} [{\bf Simons' inequality}]
\label{simon-gen-theo} Let $M$ be a hypersurface of constant mean curvature  $H$ immersed in $\mathcal N$ and  let $\varphi=|\phi|.$ 
Then,  there exists $\varepsilon>0$ such that  the following inequality holds

\begin{align}
\label{simons5}
&\varphi\Delta\varphi\geq \frac{2}{n(1+\varepsilon)}|\nabla\varphi|^2-\varphi^4-\frac{n(n-2)H}{\sqrt{n(n-1)}}\varphi^3\notag
\\
&
+n\left(H^2+\frac{(K_2-K_1)H}{2}+2K_2-K_1\right)\varphi^2-2nK'\varphi
\\
&\notag
+\frac{n^2H(K_2-K_1)}{2}-\frac{n(n-1)}{2\varepsilon}(K_1-K_2)^2
\end{align}
\end{theorem}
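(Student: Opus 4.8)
The plan is to start from the pointwise Simons identity for the traceless second fundamental form (Theorem \ref{simon-inequality}) and to convert the rough-Laplacian term $\langle\nabla^2\Phi,\Phi\rangle$ into information about $\varphi\Delta\varphi$ by means of the Bochner identity. Concretely, since $\frac12\Delta\varphi^2=\varphi\Delta\varphi+|\nabla\varphi|^2$ and also $\frac12\Delta|\Phi|^2=|\nabla\Phi|^2+\langle\nabla^2\Phi,\Phi\rangle$, one obtains, away from the zero set of $\varphi$, the identity $\varphi\Delta\varphi=|\nabla\Phi|^2-|\nabla\varphi|^2+\langle\nabla^2\Phi,\Phi\rangle$. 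Everything then reduces to bounding the right-hand side from below, term by term.

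First I would handle the gradient terms. The difference $|\nabla\Phi|^2-|\nabla\varphi|^2$ is controlled from below by a refined Kato inequality: since $\Phi$ is traceless and, for constant $H$, satisfies the Codazzi equation up to ambient-curvature terms, one has $|\nabla\Phi|^2\ge(1+\tfrac2n)|\nabla\varphi|^2$, which produces the baseline coefficient $\frac2n$ for $|\nabla\varphi|^2$. The passage from $\frac2n$ to $\frac{2}{n(1+\varepsilon)}$ is then forced by the curvature cross terms discussed below, which are split off with a Young inequality of parameter $\varepsilon$. Next I would treat the algebraic terms coming from the Gauss equation. The quartic term is the standard $-|\phi|^4$. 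The cubic term is the delicate one: it is of the form $nH\,\mathrm{tr}(\phi^3)$, and to bound it I would invoke the sharp algebraic inequality $|\mathrm{tr}(\phi^3)|\le\frac{n-2}{\sqrt{n(n-1)}}|\phi|^3$ valid for traceless symmetric endomorphisms of an $n$-dimensional space, which yields exactly the coefficient $\frac{n(n-2)}{\sqrt{n(n-1)}}$ of the $H\varphi^3$ term.

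The main obstacle, and the heart of the computation, is the ambient-curvature block, which does not collapse to a single constant as it would in a space form. Here I would expand the curvature contributions to $\langle\nabla^2\Phi,\Phi\rangle$ into contractions of $\Phi$ and of $\nabla\Phi$ against the curvature tensor $R$ and its derivative $\nabla R$ of $\mathcal N$, and estimate each component using the two-sided bound $K_2\le sec\le K_1$. The key point is that sectional-type components are pinched in $[K_2,K_1]$, whereas the genuinely mixed components $\langle R(e_i,e_j)e_k,e_l\rangle$ are controlled by the oscillation $K_1-K_2$; collecting the diagonal contributions produces the term $n\bigl(H^2+\frac{(K_2-K_1)H}{2}+2K_2-K_1\bigr)\varphi^2$ together with the $H$-linear constant $\frac{n^2H(K_2-K_1)}{2}$, while the Codazzi defect couples $\nabla\varphi$ to the oscillation and, after the Young splitting, yields simultaneously the reduction of the gradient coefficient and the term $-\frac{n(n-1)}{2\varepsilon}(K_1-K_2)^2$.

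Finally the $\nabla R$ contribution is bounded in absolute value by $2nK'\varphi$ using the hypothesis $|\nabla R|\le K'$, which accounts for the linear term $-2nK'\varphi$. Assembling all these estimates and choosing $\varepsilon>0$ appropriately gives the stated inequality. I expect the principal difficulty to be not conceptual but organizational: keeping the signs and the exact constants of the mixed curvature components under control, since it is precisely these components that distinguish the general-ambient statement from the classical space-form case, and a single mishandled contraction would spoil the coefficients in the $\varphi^2$ term and in the two $H$-dependent constants.
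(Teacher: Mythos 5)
Your proposal follows essentially the same route as the paper's proof: the Weitzenb\"ock identity $\varphi\Delta\varphi=\langle\nabla^2\Phi,\Phi\rangle+|\nabla\Phi|^2-|\nabla\varphi|^2$, an $\varepsilon$-weakened Kato inequality whose Codazzi defect produces both the coefficient $\frac{2}{n(1+\varepsilon)}$ and the term $-\frac{n(n-1)}{2\varepsilon}(K_1-K_2)^2$, Okumura's bound $|\mathrm{tr}(\phi^3)|\le\frac{n-2}{\sqrt{n(n-1)}}|\phi|^3$ for the cubic term, pinching estimates of the curvature contractions by $K_1$, $K_2$, and a Cauchy--Schwarz bound of the $\nabla R$ terms by $2nK'\varphi$. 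All of these ingredients, and the way the constants are assembled, coincide with the paper's argument (its Simons identity, Kato lemma, and the term-by-term curvature estimates), so the approach is the same.
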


 In order to prove Theorem \ref{simon-gen-theo}, we need some preliminary results. First, we have to establish a Simons' identity 
  for constant mean curvature  hypersurfaces.  
We prove it here for the sake of completeness but 
the proof is essentially contained in \cite{B}. 

   \begin{proposition} \label{FSimGen}
Let $M$ be a hypersurface of constant mean curvature, immersed in $\mathcal N.$ 
Then the following relation is satisfied at any point $p$ of $M$ 
\begin{align}
\label{total-Phi2}
&\notag 
\langle \nabla^2\Phi,\Phi\rangle=
-|\phi|^4+nHtr(\phi^3)+nH^2|\phi|^2-|\phi|^2 Ric(\nu,\nu)
+nH\sum_{i=1}^nR(\nu,e_i,\nu,\phi(e_i))
\\&
+\sum_{i,k=1}^n\{2R(e_k,\phi(e_i),e_k,\phi(e_i))+2R(e_k,e_i,\phi(e_i),\phi(e_k))\}
\\&\notag
+\sum_{i,k=1}^n
\{\langle(\overline\nabla_{e_i}R)(e_k,\phi(e_i))e_k,\nu\rangle+\langle(\overline\nabla_{e_i}R)(e_i,\phi(e_k))e_k,\nu\rangle\},
\end{align}

where   $\{e_{i}\}$  is a local  orthonormal frame at $p.$ 

Moreover, if $\mathcal N$ has constant  curvature $c,$ one has

\begin{equation}
\label{total-Phi2-cost}
 \langle\nabla^2\Phi,\Phi\rangle=
-|\phi|^4+nHtr(\phi^3)+n(H^2+c)|\phi|^2
\end{equation}

\end{proposition}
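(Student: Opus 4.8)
The plan is to reduce everything to a Simons-type computation for the full second fundamental form $B$ and then to pass to its traceless part. Since $M$ has codimension one, its normal bundle is a line bundle trivialized by $\nu$, so the normal connection is flat and $\nu$ is parallel; in particular $A^{\nu}=A$ and the normal curvature will not contribute. Because $H$ is constant and both $g$ and $\nu$ are parallel, the summand $H\nu g$ of $B=\Phi+H\nu g$ is parallel for the connection $\nabla$ on $\mathrm{Sym}^{2}T^{*}M\otimes NM$; hence $\nabla\Phi=\nabla B$ and $\nabla^{2}\Phi=\nabla^{2}B$. It therefore suffices to compute $\langle\nabla^{2}B,\Phi\rangle$, and I would organise this exactly as in B\'erard's general submanifold identity \cite{B1}, \cite{B}, specialised to a hypersurface.

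The computation itself proceeds in three moves. First, I would write down the Codazzi equation in the form
\[
(\nabla_{X}B)(Y,Z)-(\nabla_{Y}B)(X,Z)=-\bigl(R(X,Y)Z\bigr)^{\perp},
\]
the right-hand side being the normal component of the ambient curvature. Second, working at a point with $\nabla_{e_i}e_j=0$ and expanding $\nabla^{2}B(e_i,e_j)=\sum_{k}(\nabla_{e_k}\nabla_{e_k}B)(e_i,e_j)$, I would apply Codazzi to move the differentiation onto the trace slot; commuting the resulting covariant derivatives by the Ricci identity then produces a Hessian of the mean curvature vector $tr\,B=nH\nu$ (which vanishes, $H$ being constant), the intrinsic curvature $R^{M}$ acting on $B$ (the normal curvature dropping out by flatness), and, from differentiating the curvature term in Codazzi, exactly the $\langle(\overline\nabla_{e_i}R)(\cdot)\cdot,\nu\rangle$ contributions of \eqref{total-Phi2}. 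Third, I would invoke the Gauss equation to rewrite $R^{M}$ through the ambient $R$ and a quadratic expression in $B$.

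At this stage I would contract with $\Phi$ and collect terms using $tr\,\phi=0$ and $tr\,B=nH$. The quadratic-in-$B$ part produced by Gauss reassembles, after the traceless reduction, into the ambient-independent block $-|\phi|^{4}+nH\,tr(\phi^{3})+nH^{2}|\phi|^{2}$, while the genuinely ambient pieces give $-|\phi|^{2}Ric(\nu,\nu)$, the term $nH\sum_{i}R(\nu,e_i,\nu,\phi(e_i))$, and the two curvature sums $\sum_{i,k}\{2R(e_k,\phi(e_i),e_k,\phi(e_i))+2R(e_k,e_i,\phi(e_i),\phi(e_k))\}$. I expect the main obstacle to be exactly this bookkeeping: correctly applying the Ricci identity and the Gauss substitution, and verifying that splitting $B=\Phi+H\nu g$ redistributes the trace so as to yield precisely the cubic coefficient $nH$ in front of $tr(\phi^{3})$ and the coefficient $nH^{2}$, rather than some other combination. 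Everything else is routine index manipulation following \cite{B}.

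Finally, for the constant-curvature case I would substitute $R(X,Y)Z=c(\langle Y,Z\rangle X-\langle X,Z\rangle Y)$. Then $\overline\nabla R=0$, so all derivative-of-curvature terms disappear; $Ric(\nu,\nu)=nc$; and $R(\nu,e_i,\nu,\phi(e_i))$ sums to a multiple of $tr\,\phi=0$ and hence vanishes. Using $tr\,\phi=0$ once more, the two remaining curvature sums reduce to multiples of $|\phi|^{2}$, and combining them with $-|\phi|^{2}Ric(\nu,\nu)$ collapses the whole ambient block to $nc\,|\phi|^{2}$. Adding the universal quadratic block then gives $-|\phi|^{4}+nH\,tr(\phi^{3})+n(H^{2}+c)|\phi|^{2}$, which is \eqref{total-Phi2-cost}.
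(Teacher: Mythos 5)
Your proposal is correct and is essentially the paper's own proof: the paper likewise begins by reducing $\nabla^{2}\Phi$ to $\nabla^{2}B$ (using that $H\nu g$ is parallel when $H$ is constant and $\nu$ is the unit normal) and then rests on B\'erard's Simons-type identity from \cite{B}, which is precisely the Codazzi--Ricci--Gauss computation you sketch, followed by the same traceless bookkeeping and the same collapse of the ambient terms in the constant-curvature case. The only organizational difference is that the paper quotes the pointwise formula for $\langle \nabla^{2}B(x,y),w\rangle$ (Theorem 2 of \cite{B}) and contracts it separately against $B$ and $g\nu$, working with $A$ throughout and substituting $A=\phi+H\,\mathrm{id}$ only at the end, whereas you contract against $\Phi$ directly.
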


\begin{proof}

By definition

\begin{equation*}
\langle \nabla^2 \Phi, \Phi\rangle=\langle \nabla^2 B, \Phi\rangle=
\langle \nabla^2 B, B\rangle-H\langle\nabla^2 B, g\nu\rangle.
\end{equation*}

Then,  at $p$

\begin{equation}
\label{total-Phi}
\langle \nabla^2 \Phi, \Phi\rangle=\sum_{i,j=1}^n
\langle \nabla^2 B(e_i,e_j), B(e_i,e_j)\rangle-H\sum_{i=1}^n\langle\nabla^2 B(e_i,e_i), \nu\rangle.
\end{equation}
We need to compute the two terms in the right-hand side of  equation \eqref{total-Phi}. First we observe that, for any normal vector field $w$ and any tangent vector fields $x,$ $y$ one has (see Theorem 2 in \cite{B}) 

\begin{align}
\label{total-eq}
&\langle \nabla^2 B(x,y), w\rangle=-|A|^2\langle A^w(x),y\rangle +\langle  R(A)^w(x), y\rangle
\\
&
\notag
+\langle  R(nH\nu,x)y, w\rangle+\langle A^w(y),A^{nH\nu}(x)\rangle+\langle  R'^w(x), y\rangle,
\end{align}
where $R'^w$ and $R(A)^w$ are defined as follows:
\begin{equation}\notag
\langle  R'^w(x), y\rangle=\sum_{k=1}^n\{\langle(\nabla_xR)(e_k,y)e_k,w\rangle+\langle(\nabla_{e_k}R)(e_k,x)y,w\rangle
\}
\end{equation}
\begin{align}
&\langle R(A)^w(x), y\rangle=\sum_{k=1}^n\{2\langle R(e_k,y)B(x,e_k),w\rangle+2\langle R(e_k,x)B(y,e_k),w\rangle\notag
\\&\notag
-\langle A^w(x),R(e_k,y)e_k\rangle-\langle A^w(y),R(e_k,x)e_k\rangle
+\langle R(e_k, B(x,y))e_k,w\rangle-2\langle A^w(e_k),R(e_k,x)y\rangle
\}
\end{align}

  In order to  compute the first  term $\sum_{i,j=1}^n
\langle \nabla^2 B(e_i,e_j), B(e_i,e_j)\rangle$ in  \eqref{total-Phi}, we take $x=e_i,$ $y=e_j,$ $w=B(e_i,e_j)$ in \eqref{total-eq} and sum on $i$ and $j$. The computation of all the terms is as follows.

\

1. The first term is
\begin{equation} \label{first-B}
-|A|^2 \sum_{i,j}\langle A^{B(e_i,e_j)}(e_i),e_j\rangle=-|A|^2 \sum_{i,j}\langle A^2(e_j),e_j\rangle=-|A|^4.
\end{equation}

2.  The second  term is
\begin{align} \label{second-B}
 \sum_{i,j=1}^n\langle  R(A)^{B(e_i,e_j)}(e_i), e_j\rangle&\notag=
\sum_{k,i,j=1}^n\{2\langle R(e_k,e_j)B(e_i,e_k),B(e_i,e_j)\rangle+2\langle R(e_k,e_i)B(e_i,e_k),B(e_i,e_j)\rangle\\
&\notag-\langle A^{B(e_i,e_j)}(e_i),R(e_k,e_j)e_k\rangle-\langle A^{B(e_i,e_j)}(e_j),R(e_k,e_i)e_k\rangle\\
&+\langle R(e_k, B(e_i,e_j))e_k,B(e_i,e_j)\rangle-2\langle A^{B(e_i,e_j)}(e_k),R(e_k,e_i)e_j\rangle\}.
\end{align}

The first two terms in the  right-hand side of \eqref{second-B} are zero, then, rearranging terms

\begin{align} \label{second-B2}
\sum_{i,j=1}^n\langle  R(A)^{B(e_i,e_j)}(e_i), e_j\rangle&
=\sum_{k,i=1}^n\{2R(e_k,A(e_i),e_k,A(e_i))+2R(e_k,e_i,A(e_i),A(e_k))\}\notag
\\&-|A|^2Ric(\nu,\nu).
\end{align}

3. The third term is
\begin{align} \label{third-B}
\sum_{i,j=1}^n nH\langle  R(\nu,e_i)(e_j), B(e_i,e_j)\rangle=
nH\sum_{i=1}^n R(\nu,e_i,\nu ,A(e_i)).
\end{align}

4.  The fourth term is
\begin{align} \label{fourth-B}
\sum_{i,j=1}^n\langle A^{B(e_i,e_j)}(e_j),A^{nHN}(e_i)\rangle=nH\sum_{i=1}^n\langle A^3(e_i),e_i\rangle=nHtr(A^3).
\end{align}

5. The fifth term is
\begin{align} \label{fifth-B}
&\sum_{i,j=1}^n\langle  R'^{B(e_i,e_j)}(e_i), e_j\rangle=\sum_{i,k=1}^n
\{\langle(\overline\nabla_{e_i}R)(e_k,A(e_i))e_k,\nu\rangle+\langle(\overline\nabla_{e_i}R)(e_i,A(e_k))e_k,\nu\rangle\}.
\end{align}

\ 
 By summing up all the term in  \eqref{first-B}, \eqref{second-B2}, \eqref{third-B}, \eqref{fourth-B} and \eqref{fifth-B} and rearranging terms, one has

\begin{align}
\label{total-B}
&\notag \sum_{i,j=1}^n
\langle \nabla^2 B(e_i,e_j), B(e_i,e_j)\rangle
=-|A|^4+nHtr(A^3)+nH\sum_{i=1}^nR(\nu,e_i,\nu,A(e_i))
\\&
+\sum_{i,k=1}^n\{2R(e_k,A(e_i),e_k,A(e_i))+2R(e_k,e_i,A(e_i),A(e_k))\}-|A|^2Ric(\nu,\nu)
\\&\notag
+\sum_{i,k=1}^n
\{\langle(\overline\nabla_{e_i}R)(e_k,A(e_i))e_k,\nu\rangle+\langle(\overline\nabla_{e_i}R)(e_i,A(e_k))e_k,\nu\rangle\}.
\end{align}

In order  to compute the second  term $\langle\nabla^2 B(e_i,e_i), \nu\rangle$ of  \eqref{total-Phi},  we take  $x=y=e_i,$ and $w=\nu$  in 
\eqref{total-eq}. The computation of  all the terms  is as follows.

1. The first term is

\begin{equation}
\label{first-N}
-\sum_{i=1}^n|A|^2\langle A(e_i),e_i\rangle=-nH|A|^2.
\end{equation}

2.  The second term is

\begin{align}
\label{second-N}
&\sum_{i=1}^n\langle  R(A)^\nu(e_i), e_i\rangle=
\sum_{k,i=1}^n\{2\langle R(e_k,e_i)B(e_i,e_k),\nu\rangle+2\langle R(e_k,e_i)B(e_i,e_k),\nu)\rangle
\\&\notag
-\langle A(e_i),R(e_k,e_i)e_k\rangle-\langle A(e_i),R(e_k,e_i)e_k\rangle
+\langle R(e_k, B(e_i,e_i))e_k,\nu\rangle-2\langle A(e_k),R(e_k,e_i)e_i\rangle
\}.
\end{align}

The first two terms in the  right-hand side of \eqref{second-N} are zero, then

\begin{align}
\label{second-N1}
\sum_{i=1}^n\langle  R(A)^{\nu}(e_i), e_i\rangle
=\sum_{k,i=1}^n
\{-2\langle A(e_i),R(e_k,e_i)e_k\rangle-2\langle A(e_k),R(e_k,e_i)e_i\rangle\}-nHRic(\nu,\nu).
\end{align}

The  first two terms in the  right-hand side of \eqref{second-N1} are opposite, then

\begin{align}
\label{second-N2}
&\sum_{i=1}^n\langle  R(A)^{\nu}(e_i), e_i\rangle=-nHRic(\nu,\nu).
\end{align}

3. The third term is

\begin{align}
\label{third-N}
&\sum_{i=1}^n nH\langle  R(\nu,e_i)e_i, \nu\rangle=nH Ric(\nu,\nu).
\end{align}

4. The fourth term is

\begin{align}
\label{fourth-N}
\sum_{i=1}^n\langle A(e_i),A^{nH\nu}(e_i)\rangle=nH|A|^2.
\end{align}

5. The fifth term is

\begin{align}
\label{fifth-C}
\sum_{i=1}^n\langle  R'^{\nu}(e_i), e_i\rangle
=\sum_{i,k=1}^n\{\langle(\overline\nabla_{e_i}R)(e_k,e_i)e_k,\nu\rangle+\langle(\overline \nabla_{e_k}R)(e_k,e_i)e_i,\nu\rangle
\}.
\end{align}

\

 The sum of the terms in \eqref{first-N}, \eqref{second-N2}, \eqref{third-N} and \eqref{fourth-N} is zero, hence one has

\begin{align}
\label{total-N}
&\sum_{i=1}^n\langle\nabla^2 B(e_i,e_i), \nu\rangle=
\sum_{i,k=1}^n\{\langle(\overline\nabla_{e_i}R)(e_k,e_i)e_k,\nu\rangle+\langle(\overline \nabla_{e_k}R)(e_k,e_i)e_i,\nu\rangle
\}.
\end{align}

 Replacing  \eqref{total-B} and \eqref{total-N} in \eqref{total-Phi}, one obtains

\begin{align}
\label{total-Phi1}
&\notag\langle \nabla^2\Phi,\Phi\rangle=
-|A|^4+nHtr(A^3)+nH\sum_{i=1}^nR(\nu,e_i,\nu,A(e_i))
\\&\notag
+\sum_{i,k=1}^n\{2R(e_k,A(e_i),e_k,A(e_i))+2R(e_k,e_i,A(e_i),A(e_k))\}-|A|^2Ric(\nu,\nu)
\\&
+\sum_{i,k=1}^n
\{\langle(\overline\nabla_{e_i}R)(e_k,A(e_i))e_k,\nu\rangle+\langle(\overline\nabla_{e_i}R)(e_i,A(e_k))e_k,\nu\rangle\}
\\&\notag
-H\sum_{i,k=1}^n\{\langle(\overline\nabla_{e_i}R)(e_k,e_i)e_k,\nu\rangle+\langle(\overline \nabla_{e_k}R)(e_k,e_i)e_i,\nu\rangle
\}.
\end{align}
 In order to  obtain \eqref{total-Phi2}, one needs to  write the  right-hand side of  \eqref{total-Phi1} in terms of $\phi.$ This is straightforward 
 by replacing in \eqref{total-Phi1} the following identities

\begin{equation}
\notag
|A|^2=|\phi|^2+H^2n,\  |A|^4=|\phi|^4+n^2H^4+2nH^2|\phi|^2, \  tr(A^3)=tr(\phi^3)+nH^3+3H|\phi|^2.
\end{equation}

Finally, equality  \eqref{total-Phi2-cost} is a straightforward consequence of \eqref{total-Phi2}.
\end{proof}


A  key step towards Theorem \ref{simon-gen-theo} is the following Proposition.

\begin{proposition}
\label{simon-inequality}
Let $M$ be a hypersurface of constant mean curvature $H,$ immersed in $\mathcal N.$ 
Then the following relation is satisfied at   any point $p$ of $M$ and for any $\varepsilon>0$

\begin{align}
\label{simons3}
&|\phi|\Delta|\phi|+|\phi|^4+\frac{n(n-2)H}{\sqrt{n(n-1)}}|\phi|^3-nH^2|\phi|^2+|\phi|^2 Ric(\nu,\nu)
-nH\sum_{i=1}^nR(\nu,e_i,\nu,\phi(e_i))\notag \\
&-\sum_{i,k=1}^n\{2R(e_k,\phi(e_i),e_k,\phi(e_i))+2R(e_k,e_i,\phi(e_i),\phi(e_k))\}\\
&-\sum_{i,k=1}^n
\{\langle(\overline\nabla_{e_i}R)(e_k,\phi(e_i))e_k,\nu\rangle+\langle(\overline\nabla_{e_i}R)(e_i,\phi(e_k))e_k,\nu\rangle\} \notag \\
&\geq\frac{2}{n(1+\varepsilon)}|\nabla|\phi||^2-\frac{2}{\varepsilon} \sum_{ij}R(\nu,e_i,e_i,e_j)^2,\notag
\end{align}

where   $\{e_{i}\}$  is a local  orthonormal frame at $p.$
\end{proposition}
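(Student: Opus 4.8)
The plan is to start from the Simons' identity \eqref{total-Phi2} established in Proposition \ref{FSimGen}, which expresses $\langle\nabla^2\Phi,\Phi\rangle$ in terms of $|\phi|$, $H$, the curvature terms, and the cubic term $tr(\phi^3)$. The strategy is to convert the \emph{tensorial} rough-Laplacian quantity on the left into the \emph{scalar} expression $|\phi|\Delta|\phi|$ that appears in \eqref{simons3}. For this I would use the standard Kato-type inequality relating $\langle\nabla^2\Phi,\Phi\rangle$ to $|\phi|\Delta|\phi|$. The refined Kato inequality for the traceless second fundamental form of a constant mean curvature hypersurface states that, away from the zero set of $|\phi|$,
\begin{equation*}
|\nabla\Phi|^2-|\nabla|\phi||^2\geq \frac{2}{n}|\nabla|\phi||^2,
\end{equation*}
equivalently $|\nabla\Phi|^2\geq\left(1+\tfrac{2}{n}\right)|\nabla|\phi||^2$. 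Combined with the Bochner-type splitting $\langle\nabla^2\Phi,\Phi\rangle=|\phi|\Delta|\phi|+|\nabla|\phi||^2-|\nabla\Phi|^2$ (valid at points where $|\phi|\neq0$, with the singular set handled by the usual approximation), this yields the coefficient $\tfrac{2}{n}$ in the desired lower bound. The role of $\varepsilon$ is to absorb, via Young's inequality, the one curvature term that does not naturally fit the traceless/Codazzi structure, namely the Codazzi defect coming from the ambient curvature.

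Concretely, the first step is to record the Kato inequality and the Bochner splitting, so that \eqref{total-Phi2} becomes an inequality for $|\phi|\Delta|\phi|$. Rearranging \eqref{total-Phi2} and moving all terms except $|\phi|\Delta|\phi|$ and the gradient terms to the left-hand side already produces almost exactly the left side of \eqref{simons3}; the only discrepancy is the cubic term, where \eqref{total-Phi2} carries $nHtr(\phi^3)$ while \eqref{simons3} carries $-\tfrac{n(n-2)H}{\sqrt{n(n-1)}}|\phi|^3$. I would reconcile these using the algebraic estimate for a traceless symmetric endomorphism $\phi$ of an $n$-dimensional space,
\begin{equation*}
tr(\phi^3)\geq-\frac{n-2}{\sqrt{n(n-1)}}|\phi|^3,
\end{equation*}
which is the sharp Okumura-type inequality. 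Multiplying by $nH$ (recall $H\geq0$ under our orientation convention) and using it to bound $nH\,tr(\phi^3)$ from below gives precisely the cubic term appearing in \eqref{simons3}.

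The remaining task is to handle the Codazzi/curvature term. Because the ambient manifold is not a space form, the Codazzi equation acquires a correction involving $R(\nu,e_i,e_i,e_j)$, and this is exactly the term that must be split off with $\varepsilon$. The plan is to isolate this defect and apply Young's inequality in the form $2ab\leq\varepsilon^{-1}a^2+\varepsilon\,b^2$, so that one part is absorbed into the $|\nabla|\phi||^2$ coefficient, converting the clean $\tfrac{2}{n}$ into $\tfrac{2}{n(1+\varepsilon)}$, and the complementary part becomes the explicit error $-\tfrac{2}{\varepsilon}\sum_{ij}R(\nu,e_i,e_i,e_j)^2$ on the right-hand side of \eqref{simons3}. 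Collecting the cubic estimate, the Kato--Bochner inequality, and this $\varepsilon$-splitting then yields \eqref{simons3}.

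I expect the main obstacle to be the careful bookkeeping of the Codazzi defect: in a general Riemannian ambient manifold one does \emph{not} have $\nabla\Phi$ fully symmetric, so the Kato inequality and the Bochner identity must be applied with the correct ambient-curvature correction terms kept track of, and it is precisely these correction terms that feed into the $\varepsilon$-dependent quantity. The sharp Okumura inequality and the refined Kato inequality are standard and clean; the delicate point is matching the non-symmetric part of $\nabla\Phi$ with exactly the curvature term $\sum_{ij}R(\nu,e_i,e_i,e_j)^2$ and verifying that the Young splitting produces the stated coefficients $\tfrac{2}{n(1+\varepsilon)}$ and $\tfrac{2}{\varepsilon}$ with no residual error.
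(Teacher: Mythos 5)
Your proposal is correct and follows essentially the same route as the paper: it combines the Simons identity \eqref{total-Phi2} with the Bochner-type splitting $|\Phi|\Delta|\Phi|=\langle\nabla^{2}\Phi,\Phi\rangle+|\nabla\Phi|^{2}-|\nabla|\Phi||^{2}$, a Kato inequality in which the Codazzi defect from the ambient curvature is split off by Young's inequality (yielding exactly the coefficients $\tfrac{2}{n(1+\varepsilon)}$ and $\tfrac{2}{\varepsilon}\sum_{ij}R(\nu,e_i,e_i,e_j)^{2}$, which is the content of the paper's Lemma \ref{kato-lemma}, itself borrowed from Schoen--Simon--Yau), and Okumura's estimate $tr(\phi^{3})\geq-\tfrac{n-2}{\sqrt{n(n-1)}}|\phi|^{3}$ together with the convention $H\geq0$ for the cubic term. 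The only cosmetic difference is bookkeeping: the paper packages the $\varepsilon$-splitting inside the statement of Kato's inequality, while you perform it explicitly after the clean $\tfrac{2}{n}$ estimate, which is the same computation.
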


\begin{proof}

As in the Weitzenb\"ock formulas, by a straightforward computation, one has
\begin{align}\notag
\langle \nabla^2\Phi,\Phi\rangle=-|\nabla\Phi|^2+\frac{1}{2}\Delta|\Phi|^2
=-|\nabla\Phi|^2+|\Phi|\Delta|\Phi|+|\nabla|\Phi||^2
\end{align}

That is
\begin{align}
\label{simonsrough}
|\Phi|\Delta|\Phi|=\langle \nabla^2\Phi,\Phi\rangle+ |\nabla\Phi|^2 -|\nabla|\Phi||^2
\end{align}

In order to obtain  inequality  \eqref{simons3}, we will use a Kato's inequality  to estimate the difference $|\nabla\Phi|^2 -|\nabla|\Phi||^2.$   
R. Schoen, L.  Simon and  S. T. Yau \cite{SSY} did such estimate in the case of minimal hypersurfaces.  One can easily 
adapt their computation to the case $H\not=0,$ in order  to obtain the following result.

\begin{lemma}[{\bf Kato's inequality}]
\label{kato-lemma} Assume the hypothesis of Proposition \ref{simon-inequality} are satisfied. Then, for any positive $\varepsilon$ 
\begin{align}
\label{kato7}
|\nabla\Phi|^2 -|\nabla|\Phi||^2 \geq\frac{2}{n(1+\varepsilon)}|\nabla|\Phi||^2-\frac{2}{\varepsilon} \sum_{ij}R(\nu,e_i,e_i,e_j)^2.
\end{align}

Moreover, if   $\mathcal N$ has constant curvature, then $|\Phi|$ satisfies the simpler  inequality

\begin{align}
\label{kato8}
|\nabla\Phi|^2 -|\nabla|\Phi||^2 \geq\frac{2}{n}|\nabla|\Phi||^2.
\end{align}

\end{lemma}

Let us finish the proof of Proposition \ref{simon-inequality}.

Replacing \eqref{kato7} in \eqref{simonsrough} one has, for any positive $\varepsilon$

\begin{equation}
\label{simons1}
|\Phi|\Delta|\Phi|\geq \langle \nabla^2\Phi,\Phi\rangle
+ \frac{2}{n(1+\varepsilon)}|\nabla|\Phi||^2-\frac{2}{\varepsilon} \sum_{ij}R(\nu,e_i,e_i,e_j)^2.
\end{equation}
Replacing \eqref{total-Phi2} in \eqref{simons1} and using $|\Phi|=|\phi|$, we get

\begin{align} \label{simons2}
&|\phi|\Delta|\phi|
+|\phi|^4-nHtr(\phi^3)-nH^2|\phi|^2+|\phi|^2 Ric(\nu,\nu)
-nH\sum_{i=1}^nR(\nu,e_i,\nu,\phi(e_i))\notag
\\&
-\sum_{i,k=1}^n\{2R(e_k,\phi(e_i),e_k,\phi(e_i))+2R(e_k,e_i,\phi(e_i),\phi(e_k))\}
\\&\notag
-\sum_{i,k=1}^n
\{\langle(\overline\nabla_{e_i}R)(e_k,\phi(e_i))e_k,\nu\rangle+\langle(\overline\nabla_{e_i}R)(e_i,\phi(e_k))e_k,\nu\rangle\}
\\&\notag
\geq\frac{2}{n(1+\varepsilon)}|\nabla|\phi||^2-\frac{2}{\varepsilon} \sum_{ij}R(\nu,e_i,e_i,e_j)^2.
\end{align}

 Now, in order to estimate ${\rm tr}(\phi^{3})$ we need the following Lemma  by H. Okumura \cite{O}, \cite{AD}.

\begin{lemma}
The following algebraic inequality holds for any traceless operator $\phi:$ 
\begin{equation}
\notag
-\frac{n-2}{\sqrt{n(n-1)}}|\phi|^3\leq tr(\phi^3)\leq \frac{n-2}{\sqrt{n(n-1)}}|\phi|^3.
\end{equation}
\end{lemma}

Then, we replace the  inequality  of the last lemma in \eqref{simons2} and  obtain
\eqref{simons3}.

\end{proof}

When the ambient space has a particular geometry, one can simplify   inequality \eqref{simons3}.  
The first part of  next Corollary  is proved in \cite{AD1} (inequality (10) there).

\begin{corollary}
\label{coro-simons-spaceform-locsym}
(1) Assume that the ambient manifold has constant curvature $c.$ Then
\begin{align}
\label{simons-spaceform}
|\phi|\Delta|\phi|
+|\phi|^4
+\frac{n(n-2)H}{\sqrt{n(n-1)}}|\phi|^3
-n(H^2+c)|\phi|^2
\geq\frac{2}{n}|\nabla|\phi||^2.
\end{align}

(2) Assume that the ambient manifold  is locally symmetric, that is $\overline\nabla R\equiv 0.$ Then at any point $p\in M$ and any $\varepsilon>0$ 
\begin{align}
\label{simons-locsym}
&|\phi|\Delta|\phi|
+|\phi|^4
+\frac{n(n-2)H}{\sqrt{n(n-1)}}|\phi|^3
-n(H^2+Ric(\nu,\nu))|\phi|^2
-nH\sum_{i=1}^nR(\nu,e_i,\nu,\phi(e_i))\notag
\\&
-\sum_{i,k=1}^n\{2R(e_k,\phi(e_i),e_k,\phi(e_i))+2R(e_k,e_i,\phi(e_i),\phi(e_k))\}
\\&\notag
\geq\frac{2}{n(1+\varepsilon)}|\nabla|\phi||^2-\frac{2}{\varepsilon} \sum_{ij}R(\nu,e_i,e_i,e_j)^2.
\end{align}

where $\{e_i\}$ is a local orthonormal frame  at $p.$ 
\end{corollary}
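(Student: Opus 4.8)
The plan is to derive both inequalities as specializations of Proposition \ref{simon-inequality}, i.e. of \eqref{simons3}, by exploiting the extra structure of the ambient space to annihilate or collapse the ambient curvature terms. I would treat the two parts separately, because the constant curvature case admits the sharper Kato constant $2/n$ coming from \eqref{kato8}, whereas the locally symmetric case must keep the $\varepsilon$-dependent constant and the residual error term produced by \eqref{kato7}.

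For part (1) it is cleanest not to specialize \eqref{simons3} but to restart from the constant curvature Simons identity \eqref{total-Phi2-cost}, which already reads $\langle\nabla^2\Phi,\Phi\rangle=-|\phi|^4+nH\,tr(\phi^3)+n(H^2+c)|\phi|^2$ with no leftover curvature terms. Feeding this into the Bochner-type identity \eqref{simonsrough} together with the constant curvature Kato inequality \eqref{kato8} gives
\[
|\phi|\Delta|\phi|+|\phi|^4-nH\,tr(\phi^3)-n(H^2+c)|\phi|^2\geq\tfrac{2}{n}|\nabla|\phi||^2 .
\]
Since $H\geq0$ by our orientation convention, Okumura's Lemma yields $-nH\,tr(\phi^3)\leq\frac{n(n-2)H}{\sqrt{n(n-1)}}|\phi|^3$, so replacing $-nH\,tr(\phi^3)$ by this larger quantity only strengthens the left-hand side and produces exactly \eqref{simons-spaceform}. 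Equivalently, one can specialize \eqref{simons3} itself: in constant curvature $Ric(\nu,\nu)=nc$, the sum $\sum_iR(\nu,e_i,\nu,\phi(e_i))$ vanishes because $\phi$ is traceless, the four-index bracket sums to $2nc|\phi|^2$ (so that $nc|\phi|^2-2nc|\phi|^2=-nc|\phi|^2$ combines with $-nH^2|\phi|^2$ to give $-n(H^2+c)|\phi|^2$), and both $\overline\nabla R$ and $\sum_{ij}R(\nu,e_i,e_i,e_j)^2$ vanish, which lets one send $\varepsilon\to0$.

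For part (2) the observation is simply that local symmetry means $\overline\nabla R\equiv0$, so the two sums in \eqref{simons3} built from $(\overline\nabla_{e_i}R)$ vanish identically at every point. Deleting them from \eqref{simons3} leaves precisely the asserted inequality \eqref{simons-locsym}, with the mixed sectional-curvature terms and the error term $\frac{2}{\varepsilon}\sum_{ij}R(\nu,e_i,e_i,e_j)^2$ intact, since neither of these need vanish on a general symmetric space. No new estimate is required here; it is a pure cancellation of the derivative-of-curvature terms.

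I expect no conceptual obstacle, as all the analytic content already lives in Propositions \ref{FSimGen}, \ref{simon-inequality} and Lemma \ref{kato-lemma}. The only points needing genuine care are, in part (1), the algebraic reductions of the ambient curvature tensor, namely checking $\sum_iR(\nu,e_i,\nu,\phi(e_i))=c\,tr(\phi)=0$ and that the four-index sectional-curvature bracket collapses to $2nc|\phi|^2$, and keeping the orientation convention $H\geq0$ in force so that the \emph{lower} Okumura bound delivers the $+\frac{n(n-2)H}{\sqrt{n(n-1)}}|\phi|^3$ term with the correct sign. The main, though modest, difficulty is thus careful bookkeeping of the constant curvature tensor identities rather than any new argument.
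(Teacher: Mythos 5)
Your strategy is the paper's own: the paper offers no separate proof of this corollary, presenting it as an immediate specialization of \eqref{simons3} (and crediting part (1) to \cite{AD1}). Your part (1) is correct by either of your two routes: the algebraic identities $\sum_i R(\nu,e_i,\nu,\phi(e_i))=c\,\mathrm{tr}(\phi)=0$, $\sum_{i,k}\{2R(e_k,\phi(e_i),e_k,\phi(e_i))+2R(e_k,e_i,\phi(e_i),\phi(e_k))\}=2nc|\phi|^2$ and $R(\nu,e_i,e_i,e_j)=0$ all check out, and the route through \eqref{total-Phi2-cost}, \eqref{simonsrough} and \eqref{kato8} together with Okumura's lemma (valid since the orientation convention gives $H\geq 0$) is exactly what is needed.

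Part (2), however, contains a genuine verification gap. Deleting the $\overline\nabla R$ sums from \eqref{simons3} leaves the Ricci term with the coefficient it has there, namely $-nH^2|\phi|^2+Ric(\nu,\nu)\,|\phi|^2$, whereas \eqref{simons-locsym} as printed has $-n\bigl(H^2+Ric(\nu,\nu)\bigr)|\phi|^2$; the two differ by $(n+1)Ric(\nu,\nu)|\phi|^2$, so your claim that pure cancellation ``leaves precisely the asserted inequality'' is not true. In fact the printed statement cannot be proved this way because it is false: for the minimal Clifford torus in $\mathbb{S}^3$ (a locally symmetric ambient space, with $n=2$, $H=0$, $|\phi|^2=2$, $\nabla|\phi|=0$, $Ric(\nu,\nu)=2$) the printed left-hand side equals $4-8-8=-12$ while the right-hand side is $0$. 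What your argument actually proves is the corrected statement with $+Ric(\nu,\nu)|\phi|^2$, and that version is the one consistent with the rest of the paper: in constant curvature it reduces to \eqref{simons-spaceform} (up to the Kato constant), since then $Ric(\nu,\nu)|\phi|^2-2nc|\phi|^2=-nc|\phi|^2$, whereas the printed version would yield $-n(H^2+(n+2)c)|\phi|^2$ instead of $-n(H^2+c)|\phi|^2$. So your derivation is sound, but a complete answer must flag that \eqref{simons-locsym} as stated carries a typo (a misplaced factor $-n$ on $Ric(\nu,\nu)$) rather than assert exact agreement with it.
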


Now we are ready to prove Theorem \ref{simon-gen-theo} (Simons' inequality). 

\begin{proof} [Proof of  Theorem  \ref{simon-gen-theo}]

We choose the local orthonormal frame such that it diagonalizes the endomorphism 
$\phi$ and we denote by $\lambda_{i}$ its eigenvalue associated to $e_{i}$ (i.e. $\phi(e_{i})=\lambda_{i}\,e_{i}$).
The proof is an estimation of the terms of \eqref{simons3} depending on $R$ and $\overline \nabla R.$ 

 The first term  of \eqref{simons3} to estimate is

\begin{align}
\label{curv-est1}
&\sum_{i=1}^nR(\nu,e_i,\nu,\phi(e_i))\notag\\
&\notag=\sum_{i=1}^nR
\left(\nu,\frac{e_i+\phi(e_i)}{\sqrt{2}},\nu,\frac{e_i+\phi(e_i)}{\sqrt{2}}\right)
-\frac{1}{2}\sum_{i=1}^n \left(R(\nu,e_i,\nu,e_i)+R(\nu,\phi(e_i),\nu,\phi(e_i))\right)
\\
&\notag
=\sum_{i=1}^n sec\left(\nu,\frac{e_i+\phi(e_i)}{\sqrt{2}}\right)\left\|\frac{e_i+\phi(e_i)}{\sqrt{2}}\right\|^2
-\frac{1}{2} Ric(\nu,\nu))-
\frac{1}{2} \sum_{i=1}^n sec(\nu,\phi(e_i))\|\phi(e_i)\|^2
\\
&\notag
=\sum_{i=1}^n sec\left(\nu,\frac{e_i+\phi(e_i)}{\sqrt{2}}\right)\frac{(1+\lambda_i)^2}{2}
-\frac{1}{2} Ric(\nu,\nu)-
\frac{1}{2} \sum_{i=1}^n sec(\nu,\phi(e_i))\lambda_i^2
\\
&
\geq \frac{nK_2}{2}-\frac{(K_1-K_2)}{2}|\phi|^2-\frac{1}{2} Ric(\nu,\nu).
\end{align}

The second term  of \eqref{simons3} to estimate is
\begin{align}
\label{curv-est2}
&\sum_{i,k=1}^n\{2R(e_k,\phi(e_i),e_k,\phi(e_i))+2R(e_k,e_i,\phi(e_i),\phi(e_k))\}
\notag\\
&
=\sum_{i,k=1}^n\{2\lambda_i^2R(e_k,e_i,e_k,e_i)+2\lambda_i\lambda_kR(e_k,e_i,e_i,e_k)\}
\\
&\notag
=\sum_{i,k=1}^n(\lambda_k-\lambda_i)^2sec(e_i,e_k)\geq K_2\sum_{i,k=1}^n(\lambda_k-\lambda_i)^2=2nK_2|\phi|^2.
\end{align}

 The third term of \eqref{simons3} to estimate is
\begin{align}
\label{curv-est3}
&-\sum_{i,k=1}^n
\{\langle(\overline\nabla_{e_i}R)(e_k,\phi(e_i))e_k,\nu\rangle+\langle(\overline\nabla_{e_i}R)(e_i,\phi(e_k))e_k,\nu\rangle\}
\notag\\
&
=-\sum_{i,k=1}^n
\{\lambda_i\langle(\overline\nabla_{e_i}R)(e_k,e_i)e_k,\nu\rangle+\lambda_k\langle(\overline\nabla_{e_i}R)(e_i,e_k)e_k,\nu\rangle\}
\\
&\notag
\leq 2\sum_k\sqrt{\sum_i\lambda_i^2}\sqrt{\sum_i \langle(\overline\nabla_{e_i}R)(e_k,e_i)e_k,\nu\rangle^2}
\leq 2nK'|\phi|.
\end{align}

where in the first inequality we  have used Cauchy-Schwarz inequality.

 The  fourth and last  term of  \eqref{simons3} to estimate  is $\sum_{ij}R(\nu,e_i,e_i,e_j)^2.$ One has

\begin{align}
\notag
R(\nu,e_i,e_j,e_i)=\frac{1}{2}\left(R\left(\frac{\nu+e_j}{\sqrt 2},e_i,\frac{\nu+e_j}{\sqrt 2},e_i\right)-
R\left(\frac{\nu-e_j}{\sqrt 2},e_i,\frac{\nu-e_j}{\sqrt 2},e_i\right)\right).
\end{align}

Hence

\begin{align}
\notag
\frac{K_2-K_1}{2}\leq R(\nu,e_i,e_j,e_i)\leq \frac{K_1-K_2}{2}.
\end{align}

That is

\begin{align}
\label{curv-est6}
\sum_{i,j}R(\nu,e_i,e_i,e_j)^2\leq\frac{n(n-1)}{4}(K_1-K_2)^2.
\end{align}

Replacing  \eqref{curv-est1}, \eqref{curv-est2}, \eqref{curv-est3}, \eqref{curv-est6} in  inequality \eqref{simons3}, one has
(recall that $\varphi=|\phi|$)

\begin{align}
\label{simons4}
&\varphi\Delta\varphi\geq -\varphi^4-\frac{n(n-2)H}{\sqrt{n(n-1)}}\varphi^3+nH^2\varphi^2
+nH\left(\frac{nK_2}{2}-\frac{(K_1-K_2)}{2}\varphi^2-\frac{1}{2} K_1n\right)
\\
&\notag
+n(2K_2-K_1)\varphi^2
-2nK'\varphi-\frac{n(n-1)}{2\varepsilon}(K_1-K_2)^2+\frac{2}{n(1+\varepsilon)}|\nabla\varphi|^2.
\end{align}

Rearranging terms in \eqref{simons4}, one obtains  \eqref{simons5}.

\end{proof}

Now, we state the result of Theorem 
\ref{simon-gen-theo}  in the particular case of $\mathcal N$ being a product of manifolds with constant  curvature. 
Notice that in this case $K'=0.$ 

\begin{corollary}
\label{product}
 Let $M_i(c_i)$ be  a Riemannian manifold with constant  curvature equal to $c_i=-1,0,1,$ $i=1,2.$ 
Let $M$ be a $n$-dimensional hypersurface immersed in a manifold $M_1(c_1)\times M_2(c_2)$ with constant mean curvature $H.$  
Then, for any $\varepsilon>0,$ we have
\

(1) $c_1=c_2=-1$ or $c_1=-1,$  $c_2=0:$
\begin{align}
\notag
\varphi\Delta\varphi\geq -\varphi^4-\frac{n(n-2)H}{\sqrt{n(n-1)}}\varphi^3+n(H^2-\frac{H}{2}-2)\varphi^2
-\frac{n}{2}\left(\frac{(n-1)}{\varepsilon}+nH\right)+\frac{2}{n(1+\varepsilon)}|\nabla\varphi|^2
\end{align}

(2) $c_1=-1,$  $c_2=1:$

\begin{align}
\notag
\varphi\Delta\varphi\geq -\varphi^4-\frac{n(n-2)H}{\sqrt{n(n-1)}}\varphi^3+n(H^2-H-3)\varphi^2
-n\left(\frac{2(n-1)}{\varepsilon}+nH\right)+\frac{2}{n(1+\varepsilon)}|\nabla\varphi|^2.
\end{align}

(3) $c_1=1,$  $c_2=1$ or  $c_1=1,$  $c_2=0:$

\begin{align}
\notag
\varphi\Delta\varphi\geq -\varphi^4-\frac{n(n-2)H}{\sqrt{n(n-1)}}\varphi^3+n(H^2-\frac{H}{2}-1)\varphi^2
-\frac{n}{2}\left(\frac{(n-1)}{\varepsilon}+nH\right)+\frac{2}{n(1+\varepsilon)}|\nabla\varphi|^2.
\end{align}

\end{corollary}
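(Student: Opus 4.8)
The plan is to obtain Corollary \ref{product} as a direct specialization of the general Simons' inequality \eqref{simons5} in Theorem \ref{simon-gen-theo}. For this, the only ambient data that enter \eqref{simons5} are the bound $K'$ on the derivative of the curvature tensor and the sectional curvature bounds $K_1$ (upper) and $K_2$ (lower). So the whole task reduces to reading these three quantities off the product $M_1(c_1)\times M_2(c_2)$ and then substituting.

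First I would note that a Riemannian product of constant curvature spaces is locally symmetric, so $\overline\nabla R\equiv 0$; in particular $K'=0$, which is exactly the remark preceding the statement and which deletes the term $-2nK'\varphi$ from \eqref{simons5}. Next I would compute the optimal sectional curvature bounds. Decomposing a tangent vector of the product as $X=X_1+X_2$ along the two factors, the block structure of the curvature tensor gives, for an orthonormal pair $X,Y$,
\[
sec(X,Y)=c_1\bigl(|X_1|^2|Y_1|^2-\langle X_1,Y_1\rangle^2\bigr)+c_2\bigl(|X_2|^2|Y_2|^2-\langle X_2,Y_2\rangle^2\bigr),
\]
a combination of the two Gram determinants $a,b\ge 0$. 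Using $|X_1|^2+|X_2|^2=|Y_1|^2+|Y_2|^2=1$ and $\langle X_1,Y_1\rangle+\langle X_2,Y_2\rangle=0$, a short computation shows $a+b\le 1$. Hence $sec$ ranges exactly over the interval $[\min(c_1,c_2,0),\max(c_1,c_2,0)]$: the value $c_i$ is attained on planes tangent to a single factor and the value $0$ on mixed planes. This gives $K_2=\min(c_1,c_2,0)$ and $K_1=\max(c_1,c_2,0)$.

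Finally I would substitute these bounds case by case into \eqref{simons5} with $K'=0$. In case (1) both subcases yield $K_2=-1,\ K_1=0$; in case (2), $K_2=-1,\ K_1=1$; in case (3) both subcases yield $K_2=0,\ K_1=1$. The coefficient $n\bigl(H^2+\tfrac{(K_2-K_1)H}{2}+2K_2-K_1\bigr)$ of $\varphi^2$ and the constant term $\tfrac{n^2H(K_2-K_1)}{2}-\tfrac{n(n-1)}{2\varepsilon}(K_1-K_2)^2$ then reduce precisely to the expressions displayed in (1), (2), (3). The only step demanding genuine care is the curvature computation of the second paragraph, namely verifying that mixed planes are flat and that $a+b\le1$, which is what pins $K_1,K_2$ down to $\max(c_1,c_2,0)$ and $\min(c_1,c_2,0)$ rather than to looser bounds; once this is in hand, the Corollary follows by plain substitution.
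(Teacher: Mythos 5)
Your proposal is correct and follows essentially the same route as the paper: the paper's proof is exactly ``compute $K_1$, $K_2$ (with $K'=0$) for the product and substitute into \eqref{simons5}'', with the same values $K_1=0,K_2=-1$; $K_1=1,K_2=-1$; $K_1=1,K_2=0$ in the three cases. The only difference is that you supply the justification of the sectional-curvature bounds (the block decomposition and the estimate $a+b\le 1$), which the paper asserts without proof, and your substitutions reproduce the stated coefficients exactly.
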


\begin{proof} It is enough to  compute $K_1$ and $K_2$ in all the cases and replace such values in  \eqref{simons5}. 
In case (1), $K_1=0,$ $K_2=-1,$ in case (2), $K_1=1,$ $K_2=-1,$ in case (3), $K_1=1,$ $K_2=0.$
\end{proof}


\begin{remark} \nero M. Batista \cite{Ba} proved a formula analogous to the result of Corollary \ref{product}  for surfaces in ${\mathbb H}^2\times{\mathbb R}$ 
and ${\mathbb S}^2\times{\mathbb R}.$ 

\nero In the case of  ${\mathbb H}^3\times{\mathbb R}$ and  ${\mathbb S}^3\times{\mathbb R},$ D. Fectu and H. Rosenberg  \cite{FR} proved a formula analogous to  the result of Corollary \ref{product} for surfaces with parallel mean curvature. 
\end{remark}

\section{A generalization of a R. Schoen, L. Simon, S.T. Yau's  inequality  for  finite index  hypersurfaces with constant mean curvature}
\label{SSY-section}

In this 
section we prove a generalization of one of the integral inequalities in \cite{SSY}, for finite index, constant mean curvature hypersurfaces in a  Riemannian manifold (Theorem \ref{general-ineq-theo}).  The analogous inequality  for minimal hypersurfaces is not explicitly stated in \cite{SSY}. There, it is a  key step towards the $L^p$ estimate of the norm of the second fundamental form of a minimal stable hypersurface.

We recall that we maintain the notation  and the conditions on $\mathcal N,$  established at the beginning of Section \ref{simons-formula}. 
From now on, for any $\Omega\subset M,$ we denote by $\Omega_+:=\{ p\in \Omega\ |  \ \varphi(p)\not=0\}.$

\begin{theorem}
\label{general-ineq-theo}
Let $M$ be a complete, noncompact  hypersurface with constant mean curvature $H$ and finite index, of a manifold $\mathcal N.$ 
Then, there exists a compact subset $K$ of $M$ such that, for any $q>-\frac{n+2}{n}$ and  for any $f\in C_0^{\infty}((M\setminus K)_+)$  one has 

\begin{align}\label{i}
&\int_{(M\setminus K)_+} f^{2}\varphi^{2q+2}({\mathcal A}\varphi^{2}+{\mathcal B}H\,\varphi+{\mathcal C}H^2+{\mathcal E})\notag \\ 
&\le {\mathcal D} \int_{(M\setminus K)_+}\varphi^{2q+2}|\nabla f|^{2}+ 
{\mathcal F}\int_{(M\setminus K)_+} f^{2} \varphi^{2q+1}+ 
{\mathcal G} \int_{(M\setminus K)_+} f^{2} \varphi^{2q}
\end{align}

where

$${\mathcal D}=\left(\frac{q+1+\tilde{\varepsilon}}{\tilde{\varepsilon}}\right)\left(\frac{2}{n(1+\varepsilon)}+(3q+2)-\tilde{\varepsilon}\right),$$
$${\mathcal A}= \left(\frac{2}{n(1+\epsilon)}+(2q+1)-\tilde{\varepsilon}\right)-(q+1)(q+1+\tilde{\varepsilon}),$$
$${\mathcal B}= -a_1(q+1)(q+1+\tilde{\varepsilon}),$$
$${\mathcal C}=n\left(\frac{2}{n(1+\varepsilon)}+(2q+1)-\tilde{\varepsilon}\right)+n(q+1)(q+1+\tilde{\varepsilon}),$$
$${\mathcal E}=a_2 (q+1)(q+1+\tilde{\varepsilon})+nK_{2}\,\left(\frac{2}{n(1+\epsilon)}+(2q+1)-\tilde{\varepsilon}\right),$$
$${\mathcal F}=2nK'(q+1)(q+1+\tilde{\varepsilon}), \ \  {\mathcal G}=-a_3(q+1)(q+1+\tilde{\varepsilon}),$$

with

$$a_1=\displaystyle{\frac{n(n-2)}{\sqrt{n(n-1)}}},\ \ \  a_2=\displaystyle{n\frac{(K_2-K_1)H}{2}+n(2K_2-K_1)},\ \ 
a_3=\displaystyle{\frac{n^2H(K_2-K_1)}{2}-\frac{n(n-1)}{2\varepsilon}(K_1-K_2)^2}$$

for any $\varepsilon,$ $\tilde\varepsilon>0.$  

Moreover if, in addition, $q\geq 0,$ then we can replace $(M\setminus K)_+$ with $M\setminus K$  and,  if $M$ is stable, then $K=\emptyset.$
\end{theorem}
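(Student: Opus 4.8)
The plan is to start from Simons' inequality \eqref{simons5} of Theorem \ref{simon-gen-theo}, multiply it by a suitable power of $\varphi$ against a test function, and integrate by parts over $M\setminus K$, exactly in the spirit of the Schoen--Simon--Yau argument. Concretely, I would multiply \eqref{simons5} by $f^2\varphi^{2q}$, integrate over $(M\setminus K)_+$ (where $\varphi>0$ so that $\varphi^{2q}$ makes sense for all $q>-\tfrac{n+2}{n}$), and then transform the left-hand term $\int f^2\varphi^{2q+1}\Delta\varphi$ by integration by parts. This produces a gradient term $\int \nabla(f^2\varphi^{2q+1})\cdot\nabla\varphi$, whose expansion yields a term proportional to $\int f^2\varphi^{2q}|\nabla\varphi|^2$ (to be combined with the favorable $|\nabla\varphi|^2$ term already present on the right of \eqref{simons5}) and a cross term $\int f\varphi^{2q+1}\nabla f\cdot\nabla\varphi$. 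The key device, as in \cite{SSY}, is to absorb this cross term using Cauchy--Schwarz with a free parameter $\tilde\varepsilon$: one writes $\varphi^{2q+1}f\,\nabla f\cdot\nabla\varphi\le \tfrac{\tilde\varepsilon}{2}\varphi^{2q}f^2|\nabla\varphi|^2+\tfrac{1}{2\tilde\varepsilon}\varphi^{2q+2}|\nabla f|^2$ (up to the appropriate constant), which is exactly where the combinations $(q+1+\tilde\varepsilon)/\tilde\varepsilon$ in $\mathcal D$ and the remaining coefficients are born.

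After these manipulations, the $|\nabla\varphi|^2$ contributions must combine to give a nonnegative coefficient so that they can be discarded; this is what forces the precise shape of $\mathcal A,\ldots,\mathcal G$ and is the bookkeeping heart of the argument. The coefficient of $\int f^2\varphi^{2q}|\nabla\varphi|^2$ after integration by parts and Cauchy--Schwarz is $\tfrac{2}{n(1+\varepsilon)}+(2q+1)-\tilde\varepsilon-(q+1)(q+1+\tilde\varepsilon)+(\text{absorbed part})$, and one checks this combination is arranged so that the gradient terms can be dropped, leaving only the displayed zeroth-order terms in $\varphi$. Matching the surviving powers $\varphi^{2q+4}$, $\varphi^{2q+3}$, $\varphi^{2q+2}$, $\varphi^{2q+1}$, $\varphi^{2q}$ against the coefficients $-\varphi^4$, $-a_1\varphi^3$, the $\varphi^2$-terms, $-2nK'\varphi$, and the constant $a_3$ in \eqref{simons5} then produces exactly $\mathcal A,\mathcal B,\mathcal C,\mathcal E,\mathcal F,\mathcal G$ as stated, together with $\mathcal D$ from the $|\nabla f|^2$ term. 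I would verify this matching termwise rather than writing one long display.

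The role of stability and finite index enters through the stability operator $L=\Delta+Ric(\nu,\nu)+|A|^2$ and Proposition \ref{stability-index-compact}. Since $M$ has finite index, part (2) of that Proposition gives a compact $K$ with $M\setminus K$ stable, so $Q(g,g)=-\int_{M\setminus K} g\,L(g)\ge0$ for all $g\in C_0^\infty(M\setminus K)$. Taking the test function $g=f\varphi^{q+1}$ supported in $(M\setminus K)_+$ and expanding the stability inequality $\int|\nabla g|^2\ge\int(Ric(\nu,\nu)+|A|^2)g^2$ gives a second inequality involving $\int f^2\varphi^{2q+2}(Ric(\nu,\nu)+|A|^2)$ and the gradient terms. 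The plan is to add an appropriate multiple of the stability inequality to the integrated Simons inequality so that the curvature and $|A|^2=\varphi^2+nH^2$ contributions get folded into the coefficients $\mathcal A,\mathcal C,\mathcal E$; this is precisely how stability converts the pointwise Simons inequality into the global Caccioppoli estimate with no boundary terms. When $M$ is itself stable the same argument runs with $K=\emptyset$ by part (1) of Proposition \ref{stability-index-compact}.

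The main obstacle I anticipate is the careful sign and parameter bookkeeping: one must check that the admissible range $q>-\tfrac{n+2}{n}$ is exactly what guarantees the gradient coefficient works out (the threshold $\tfrac{2}{n}+(2q+1)+\cdots$ becoming manageable), and that for $q\ge0$ the integrand extends continuously across $\{\varphi=0\}$ so that one may replace $(M\setminus K)_+$ by $M\setminus K$ and enlarge the class of test functions from $C_0^\infty((M\setminus K)_+)$ to $C_0^\infty(M\setminus K)$. This last density/extension point, together with keeping the two free parameters $\varepsilon$ (from Simons' inequality) and $\tilde\varepsilon$ (from the Cauchy--Schwarz absorption) independent and correctly threaded through every coefficient, is where the proof is delicate rather than conceptually hard.
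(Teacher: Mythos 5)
Your proposal follows essentially the same route as the paper's own proof: multiply Simons' inequality by $f^{2}\varphi^{2q}$ and integrate by parts, absorb the cross term $\int f\varphi^{2q+1}\langle\nabla f,\nabla\varphi\rangle$ via Young's inequality with parameter $\tilde\varepsilon$, invoke Proposition \ref{stability-index-compact}(2) to get stability of $M\setminus K$, test the stability inequality with $\psi=f\varphi^{q+1}$, and take the weighted linear combination (with weights $(q+1)(q+1+\tilde\varepsilon)$ and $\frac{2}{n(1+\varepsilon)}+(2q+1)-\tilde\varepsilon$, your "appropriate multiple") that exactly cancels the $\int f^{2}\varphi^{2q}|\nabla\varphi|^{2}$ terms, which is what produces the product structure of $\mathcal A,\dots,\mathcal G$ and the factor $\bigl(\frac{q+1+\tilde\varepsilon}{\tilde\varepsilon}\bigr)$ in $\mathcal D$. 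This is precisely the paper's argument, so the proposal is correct in both strategy and detail.
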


 \begin{proof}
Using  the same notations as before, Simons' inequality (\ref{simons5}) yields 

\begin{align}
\label{simons6}
\varphi\Delta\varphi \geq \frac{2}{n(1+\varepsilon)}|\nabla\varphi|^2-\varphi^4-a_1H\,\varphi^3
+\varphi^2(nH^2+a_2)-2nK'\varphi+a_3.
\end{align}

 By Proposition \ref{stability-index-compact},  there exists a compact  subset $K$  in $M$ such that $M\setminus K$ is stable.  Notice that, if $M$ is stable, then $K=\emptyset.$ 
  
  Multiplying inequality \eqref{simons6} by $\varphi^{2q}\,f^{2},$  with $f\in C_0^{\infty}((M\setminus K)_+)$ and integrating  we obtain

\begin{align}
\label{a}
&-(2q+1)\int_{(M\setminus K)_+} \varphi^{2q}f^{2}|\nabla\varphi|^{2} -2\int_{(M\setminus K)_+}\varphi^{2q+1}f\langle\nabla f,\nabla\varphi\rangle \notag\\
&+\int_{(M\setminus K)_+} \varphi^{2q+4}f^{2}+a_1H\int_{(M\setminus K)_+}\varphi^{2q+3}f^{2}-(n\,H^{2}+a_2)\int_{(M\setminus K)_+} \varphi^{2q+2}f^{2}\\
&\notag+2\,K' \int_{(M\setminus K)_+}\varphi^{2q+1}f^{2}-a_3\,\int_{(M\setminus K)_+} \varphi^{2q}f^{2} \,\ge\, \frac{2}{ n(1+\varepsilon)}\int_{(M\setminus K)_+} |\nabla\varphi|^{2}\varphi^{2q}f^{2}.
\end{align}

We observe that, as we allow $q$ to be negative, we restrict to the subset $(M\setminus K)_+.$ If $q\geq 0,$ $f$ can be taken in $C_0^{\infty}(M\setminus K)$ and the set of integration in all the integrals in the following of the proof can be taken as $M\setminus K.$

Young's inequality gives for $\tilde{\varepsilon}>0$,

\begin{align}
\label{b}
|2\,\varphi^{2q+1}f\langle\nabla f,\nabla\varphi\rangle| &\le 2\,(\varphi^{q}f|\nabla\varphi|)(\varphi^{q+1}|\nabla f|)
\le \tilde{\varepsilon} \varphi^{2q}f^{2}|\nabla\varphi|^{2}+\frac{1}{\tilde{\varepsilon}}\varphi^{2q+2}|\nabla f|^{2}.
\end{align}

Using the estimate  \eqref{b}  in (\ref{a}),  we obtain

\begin{align}
\label{c}
&(\frac{2}{  n(1+\varepsilon)}+(2q+1)-\tilde{\varepsilon})\int_{(M\setminus K)_+}\varphi^{2q}f^{2}|\nabla\varphi|^{2}
\le \frac{1}{\tilde{\varepsilon}}\int_{(M\setminus K)_+} |\nabla f|^{2}\varphi^{2q+2}\\ 
&\notag+\int_{(M\setminus K)_+} (\varphi^{2}+a_1\,H\varphi-(n\,H^{2}+a_2))\varphi^{2q+2}f^{2}+2nK'\int_{(M\setminus K)_+}\varphi^{2q+1}f^{2}-a_3\int_{(M\setminus K)_+} \varphi^{2q}f^{2}.
\end{align}

The stability  inequality restricted to $(M\setminus K)_+$ yields, for any $\psi \in \mathcal{C}_{0}^{\infty}((M\setminus K)_+)$ 
\begin{equation}\label{d}
\int_{(M\setminus K)_+} |\nabla \psi|^{2}\,\ge\,\int_{(M\setminus K)_+}(|A|^{2}+{\rm Ric(\nu,\nu))}\psi^{2}\ge\int_{(M\setminus K)_+} (\varphi^{2}+n\,H^{2}+n\,K_{2})\psi^{2}.
\end{equation}

Taking $\psi=f\,\varphi^{q+1}$ in (\ref{d}), we get

\begin{align}\label{e}
\int_{(M\setminus K)_+}\varphi^{2q+2}|\nabla f|^{2}+&(q+1)^{2}\int_{(M\setminus K)_+} \varphi^{2q}f^{2}|\nabla \varphi|^{2}+2(q+1)\int_{(M\setminus K)_+} \varphi^{2q+1}f\langle\nabla f,\nabla\varphi\rangle\nonumber\\
&\ge \int_{(M\setminus K)_+}\varphi^{2q+4}f^{2}+n\,(H^{2}+K_{2})\int_{(M\setminus K)_+} \varphi^{2q+2}f^{2}.
\end{align}

Integrating Young's inequality \eqref{b} gives

\begin{align}\label{f}
2|\int_{(M\setminus K)_+}\varphi^{2q+1}f\langle\nabla f,\nabla \varphi\rangle|  \le \tilde{\varepsilon} \int_{(M\setminus K)_+} \varphi^{2q}f^{2}|\nabla \varphi|^{2}+\frac{1}{\tilde{\varepsilon}}\int_{(M\setminus K)_+} \varphi^{2q+2}|\nabla f|^{2}
\end{align}

 and using (\ref{f}) in (\ref{e}), we obtain

\begin{align}\label{h}
&-(q+1)(q+1+\tilde{\varepsilon})\int_{(M\setminus K)_+}  \varphi^{2q}f^{2}|\nabla \varphi|^{2} \notag\\
&\le(1+\frac{(q+1)}{\tilde{\varepsilon}})\int_{(M\setminus K)_+}  |\nabla f|^{2}\varphi^{2q+2}-\int_{(M\setminus K)_+}  f^{2}\varphi^{2q+2}(\varphi^{2}+n\,(H^{2}+K_{2})).
\end{align}

Now we make a linear combination of  the equations (\ref{c}) and (\ref{h})  in order to eliminate the term $\int \varphi^{2q}f^{2}|\nabla \varphi|^{2}.$ One needs  $(q+1)(q+1+\tilde{\varepsilon})>0$  and $(\frac{2}{n(1+\epsilon)}+(2q+1)-\tilde{\varepsilon})>0$  for $\tilde\varepsilon$ small enough. These conditions  are satisfied if

\begin{equation}
\label{condition-q}
q>-\frac{n+2}{2n}.
\end{equation}

Therefore,  $(q+1)(q+1+\tilde{\varepsilon})(\ref{c})+(\frac{2}{n(1+\epsilon)}+(2q+1)-\tilde{\varepsilon}) (\ref{h})$ gives

\begin{align}
&0\le [\frac{(q+1)(q+1+\tilde\varepsilon)}{\tilde{\varepsilon}}+(\frac{2}{n(1+\epsilon)})(1+\frac{q+1}{\tilde{\varepsilon}})]\int_{(M\setminus K)_+} |\nabla f|^{2}\varphi^{2q+2}\notag \\
&+ \int_{(M\setminus K)_+} [(q+1)(q+1+\tilde{\varepsilon})(\varphi^{2}+a_1H\,\varphi-(n\,H^{2}+a_2)\notag\\
&-(\frac{2}{n(1+\epsilon)}+(2q+1)-\tilde{\varepsilon})(\varphi^{2}+n\,H^{2}+n\,K_{2})] \varphi^{2q+2}f^{2}\notag \\
&+2nK'(q+1)(q+1+\tilde{\varepsilon})\int_{(M\setminus K)_+} \varphi^{2q+1}f^{2}-a_3(q+1)(q+1+\tilde{\varepsilon})\int_{(M\setminus K)_+} \varphi^{2q}f^{2}
\end{align}

which gives \eqref{i},  where  the constants  are as in the statement of  Theorem \ref{general-ineq-theo}.
\end{proof}

\begin{remark} As we observed in the Introduction:

\nero any of our results  of this section can be easily adapted to the case of $\delta$-stable minimal hypersurfaces. 
More generally  one can give a definition of 
$\delta$-stable  constant mean curvature hypersurface and study the corresponding inequalities.

\nero  an inequality analogous to \eqref{i} can be obtained for a hypersurface with constant $H_r$-curvature, that is the $r$-th symmetric function of the principal curvatures (in \cite{ASZ} and \cite{ASZ1} one can find some related results).  
\end{remark}

\begin{remark}\label{A-positive-rem}
In the following, any interesting application of inequality \eqref{i} is obtained for $\mathcal A>0.$  So, we determine  the condition on $q$ in order to have $\mathcal A>0.$ As $\mathcal A$ is continuous with respect to $\varepsilon$ and $\tilde \varepsilon,$  the sign of $\mathcal A$ for 
$\varepsilon=\tilde \varepsilon=0$ is preserved for $\varepsilon$ and $\tilde \varepsilon$ small, so we study the sign of $\left(\frac{2}{n}+2q+1\right)-(q+1)^2.$
 By a straightforward computation one obtains that $\mathcal A>0$ if and only if 
\begin{equation}
\label{A-positive}
-\sqrt{\frac{2}{n}}<q< \sqrt{\frac{2}{n}}
\end{equation}
\end{remark}

With a technique analogous to that of the proof of Theorem \ref{general-ineq-theo}  we are able to prove a kind of reversed H\"{o}lder  inequality. 
L. F. Cheung and D. Zhou \cite{CZ} proved such inequality in  the case 
$q=0.$

 \begin{theorem} \label{reduction-exponent}
 Let $M$ be a complete noncompact hypersurface immersed with constant mean curvature $H$ in a manifold with constant  curvature $c.$  
 Assume $M$ has finite index. Then there exists  a geodesic ball $B_{R_0}$ in  $M$  such that, for any $q\in\left[0,\sqrt{\frac{2}{n}}\right)$

\begin{align}
\label{reduction-ineq}
\int_{M\setminus B_{R_0}} \varphi^{2q+4}\leq {\mathcal S} \int_{M\setminus B_{R_0}} \varphi^{2q+2}
\end{align}

for some positive constant ${\mathcal S}$. 
Moreover, if $M$ is stable, then we can choose $B_{R_0}=\emptyset.$

\end{theorem}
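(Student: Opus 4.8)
The plan is to specialize the master inequality \eqref{i} of Theorem \ref{general-ineq-theo} to the constant-curvature ambient setting and then choose the test function $f$ to be a cutoff that absorbs the gradient term into the right-hand side. Since $\mathcal N$ has constant curvature $c$, the derivative of the curvature tensor vanishes, so $K' = 0$; hence the term $\mathcal F \int f^2 \varphi^{2q+1}$ drops out entirely. Moreover $K_1 = K_2 = c$, so the constants $a_2$ and $a_3$ simplify dramatically: $a_2 = n(2c - c) = nc$ and $a_3 = 0$ (the $(K_1 - K_2)$ factors kill both pieces). Consequently the $\mathcal G \int f^2 \varphi^{2q}$ term also vanishes, and \eqref{i} collapses to the clean form
\begin{equation*}
\int f^2 \varphi^{2q+2}\bigl(\mathcal A \varphi^2 + \mathcal B H \varphi + \mathcal C H^2 + \mathcal E\bigr) \le \mathcal D \int \varphi^{2q+2}|\nabla f|^2,
\end{equation*}
with $\mathcal E = nc\,(\tfrac{2}{n(1+\varepsilon)} + (2q+1) - \tilde\varepsilon)$ in this case. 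By Remark \ref{A-positive-rem}, the hypothesis $q \in [0, \sqrt{2/n})$ guarantees $\mathcal A > 0$ for $\varepsilon, \tilde\varepsilon$ small, so the $\mathcal A \varphi^{2q+4}$ term is the good leading term on the left.

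First I would isolate the $\varphi^{2q+4}$ term. The remaining left-hand terms $\mathcal B H \varphi + \mathcal C H^2 + \mathcal E$ multiply $\varphi^{2q+2}$; the worst of these is $\mathcal B H \varphi$ (a $\varphi^{2q+3}$ contribution), which one handles by Young's inequality: $|\mathcal B| H \varphi^{2q+3} \le \tfrac{\mathcal A}{2}\varphi^{2q+4} + C \varphi^{2q+2}$ for a suitable constant $C$ depending on $\mathcal A, \mathcal B, H$. This moves a fraction of the $\varphi^{2q+4}$ mass to the right as a harmless $\varphi^{2q+2}$ term, leaving a strictly positive multiple of $\int f^2 \varphi^{2q+4}$ on the left and a sum of $\int f^2 \varphi^{2q+2}$ terms (with constant coefficients) plus $\mathcal D \int \varphi^{2q+2}|\nabla f|^2$ on the right.

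Next I would make the cutoff choice. By Proposition \ref{stability-index-compact}, finite index gives a compact $K$, which I enlarge to a geodesic ball $B_{R_0}$ so that $M \setminus B_{R_0}$ is stable (and $B_{R_0} = \emptyset$ when $M$ is stable). For $R_0 < R < 2R$ I take $f$ to be the standard logarithmic or linear cutoff equal to $1$ on $B_R \setminus B_{R_0}$, supported in $B_{2R}$, with $|\nabla f| \le C/R$; this controls $\int \varphi^{2q+2}|\nabla f|^2 \le (C/R)^2 \int_{B_{2R} \setminus B_R} \varphi^{2q+2}$. Since $\varphi \in L^{2q+2}$ must already be finite on $M \setminus B_{R_0}$ for the statement to be non-vacuous, letting $R \to \infty$ sends the gradient term to zero, yielding
\begin{equation*}
\int_{M \setminus B_{R_0}} \varphi^{2q+4} \le \mathcal S \int_{M \setminus B_{R_0}} \varphi^{2q+2}
\end{equation*}
with $\mathcal S$ built from $\mathcal A, \mathcal B, \mathcal C, \mathcal E, H, c$.

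\emph{The main obstacle} I anticipate is the passage $R \to \infty$: it only works if $\int_{M \setminus B_{R_0}} \varphi^{2q+4}$ is a priori known to be finite (otherwise the left-hand side is infinite and the cutoff argument is circular). The clean way around this is to keep $f$ as a genuine cutoff throughout, bound $\int_{B_R} f^2 \varphi^{2q+4}$ by $\mathcal S \int_{B_{2R}} \varphi^{2q+2} + o(1)$, and conclude by monotone convergence only after verifying that the right-hand side stays bounded — i.e. one should read the inequality as holding for each finite $R$ and only then take the supremum, so that finiteness of the $\varphi^{2q+4}$ integral becomes a \emph{conclusion} rather than a hidden hypothesis. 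Keeping careful track of which side the $o(1)$ gradient term lives on is the one genuinely delicate point; everything else is the constant-curvature simplification of \eqref{i} plus a single Young's inequality.
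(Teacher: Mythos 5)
Your route is the paper's: specialize \eqref{i} to the constant curvature case (so $K'=0$, $a_3=0$, hence $\mathcal F=\mathcal G=0$; note in passing that your value of $\mathcal E$ dropped the term $a_2(q+1)(q+1+\tilde\varepsilon)$ --- in fact $\mathcal E=c\,\mathcal C$ here --- but this is harmless since $\mathcal E$ is dumped into the constant anyway), absorb the $\mathcal B H\varphi^{2q+3}$ term by Young's inequality into a small multiple of $\varphi^{2q+4}$ plus a $\varphi^{2q+2}$ term, and finish with a cutoff. The paper performs the Young step one stage earlier, on its inequality \eqref{c} before taking the linear combination with \eqref{h}, rather than on the combined inequality \eqref{i}; the two orders are equivalent and give the same positivity range $q\in\left[0,\sqrt{\tfrac{2}{n}}\right)$ via Remark \ref{A-positive-rem}.

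The step that fails as written is the limit $R\to\infty$. Inequality \eqref{i} is derived from stability on $M\setminus K$, so the test function must lie in $C_0^{\infty}(M\setminus K)$: it must vanish on a neighborhood of $K\subset B_{R_0}$. Your $f$, equal to $1$ on all of $B_R\setminus B_{R_0}$, is not admissible; once you insert the mandatory inner transition collar (say $f\equiv 0$ on $B_{R_0}$, $f\equiv 1$ outside $B_{R_0+1}$), its gradient contribution $\int_{B_{R_0+1}\setminus B_{R_0}}\varphi^{2q+2}|\nabla f|^2$ is independent of $R$ and does not tend to zero, so the claim that the gradient term vanishes in the limit is false even under your (legitimate) reduction to the case $\int_{M\setminus B_{R_0}}\varphi^{2q+2}<\infty$. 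The paper's remedy --- which is in effect what your ``main obstacle'' paragraph proposes, though motivated by a different worry --- is not to kill the gradient term but to absorb it: take $|\nabla f|\le C$ with $C$ a fixed constant (collar of width $1$ inside, width $R$ outside), bound the entire term $\mathcal D\int\varphi^{2q+2}|\nabla f|^2$ by $C^2\mathcal D\int_{M\setminus B_{R_0}}\varphi^{2q+2}$, which is exactly the kind of quantity permitted on the right of \eqref{reduction-ineq}, and only then let $R\to\infty$ by monotone convergence on the left-hand side (relabelling $B_{R_0+1}$ as the ball of the statement). Also note your worry about a priori finiteness of $\int\varphi^{2q+4}$ is not an issue: with the cutoff present every integral is finite, and the conclusion holds in $[0,\infty]$ by monotone convergence. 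With these corrections your argument coincides with the paper's proof.
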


\begin{proof}  Let $K$ be the subset of $M$ such that  $M\setminus K$ is stable.
Inequality \eqref{c} in the  present case yields ($\varepsilon=0,$ $K'=a_3=0,$ $a_2=nc,$ $q\geq 0$) 

\begin{align}
\label{c-red-exp}
(\frac{2} { n}&+(2q+1)-\tilde{\varepsilon})\int_{M\setminus K}\varphi^{2q}f^{2}|\nabla\varphi|^{2}
\leq\frac{1} { \tilde{\varepsilon}}\int_{M\setminus K} |\nabla f|^{2}\varphi^{2q+2}\notag\\
&+\int_{M\setminus K}\varphi^{2q+4}f^2
+a_1H\int_{M\setminus K}\varphi^{2q+3}f^2-n(H^2+c)\int_{M\setminus K}\varphi^{2q+2}f^{2}.
\end{align}

Young's inequality implies, for any  positive $\delta,$ 

\begin{equation}
\label{d-red-exp}
\varphi^{2q+3}f^2\leq\frac{\delta}{2}\varphi^{2q+4}f^2+\frac{1}{2\delta}\varphi^{2q+2}f^2.
\end{equation}

Replacing \eqref{d-red-exp} in \eqref{c-red-exp} one has

\begin{align}
\label{e-red-exp}
(\frac{2} { n}&+(2q+1)-\tilde{\varepsilon})\int_{M\setminus K}\varphi^{2q}f^{2}|\nabla\varphi|^{2}
\le \frac{1 }{\tilde{\varepsilon}}\int_{M\setminus K} |\nabla f|^{2}\varphi^{2q+2}\notag\\
&+\left(1+\frac{a_1H\delta}{2}\right)\int_{M\setminus K}\varphi^{2q+4}f^2
+\left(\frac{a_1H}{2\delta}-n(H^2+c)\right)\int_{M\setminus K}\varphi^{2q+2}f^2.
\end{align}

Inequality \eqref{h} in the present case yields ($K_2=c$)

\begin{align}\label{f-red-exp}
&-(q+1)(q+1+\tilde{\varepsilon})\int_{M\setminus K}  \varphi^{2q}f^{2}|\nabla \varphi|^{2} \notag\\
&\le(1+\frac{(q+1)}{\tilde{\varepsilon}})\int_{M\setminus K}  |\nabla f|^{2}\varphi^{2q+2}-\int_{M\setminus K}  
f^{2}\varphi^{2q+2}(\varphi^{2}+n(H^{2}+c))
\end{align}

 By doing  $((q+1)(q+1+\tilde \varepsilon)\eqref{e-red-exp}+(\frac{2}{n}+2q+1-\tilde\varepsilon)\eqref{f-red-exp}$
and rearranging terms, one has

\begin{equation}
\label{g-red-exp}
{\mathcal P}\int_{M\setminus K}\varphi^{2q+4}f^2\leq{\mathcal L}\int_{M\setminus K}  \varphi^{2q+2}|\nabla f|^2+
{\mathcal Q}\int_{M\setminus K}\varphi^{2q+2}f^2
\end{equation}

where 

$${\mathcal P}=\left(\frac{2}{n}+2q+1-\tilde\varepsilon\right)-(q+1)(q+1-\tilde\varepsilon)\left(1+\frac{a_1H\delta}{2}\right)$$

$${\mathcal L}=\frac{(q+1)(q+1-\tilde\varepsilon)}{\tilde\varepsilon}+\left(\frac{2}{n}+2q+1-\tilde\varepsilon\right)
\left(1+\frac{q+1}{\tilde\varepsilon}\right)$$

$${\mathcal Q}=(q+1)(q+1-\tilde\varepsilon)\left(\frac{a_1H}{2\delta}-n(H^2+c)\right)-(\frac{2}{n}+2q+1-\tilde\varepsilon)n(H^2+c).$$

For $\delta<<1$  and $\tilde\varepsilon<<1,$  the positiveness of $\mathcal P$ is equivalent to the positiveness of  
$\frac{2}{n}+2q+1-(q+1)(q+1),$  that is $q\in\left[0,\sqrt{\frac{2}{n}}\right).$ 

Fix $R_0$ such that $K\subset B_{R_0},$ so $M\setminus B_{R_0}$  is stable.   Define   $f\in C^{\infty}_0(M\setminus B_{R_0})$  to be a radial function such that $f\equiv 0$ on $B_{R_0}$ and on  $M\setminus B_{R_0+2R+1},$ 
$f\equiv 1$ on $B_{R_0+R+1}\setminus B_{R_0+1}$ and $|\nabla f|\leq C,$ with $C$ a positive constant. 


Replacing  $f$ in \eqref{g-red-exp} yields

\begin{equation}
\label{l-red-exp}
\int_{(B_{R_0+2R+1}\setminus B_{R_0})}\varphi^{2q+4}\leq{\mathcal S}\int_{ M\setminus B_{R_0}}  \varphi^{2q+2}
\end{equation}

where $\mathcal S=\frac{{\mathcal L}+C^2{\mathcal Q}}{\mathcal P}.$ 
Then, by letting $R$ go to infinity  in \eqref{l-red-exp}   one has

\begin{equation}
\label{m-red-exp}
\int_{M\setminus B_{R_0}}\varphi^{2q+4}\leq{\mathcal S}\int_{M\setminus B_{R_0}}  \varphi^{2q+2}
\end{equation}

 \end{proof}

\begin{remark} Theorem   \ref{reduction-exponent} has interesting consequences about  the relations between the volume entropy   
of a hypersurface $M$  and $\int_M \varphi^p$ for suitable $p$ (see \cite{INS}).
\end{remark}

\section{Caccioppoli's Inequalities}
\label{Caccioppoli}

  In this section   we give three consequences of   inequality \eqref{i}. Such consequences are Caccioppoli's type inequalities.  The first one is a generalization of Theorem 1 in \cite{SSY} and  involves $\varphi$ and the curvature of the ambient space. The second one involves, in addition, $|\nabla\varphi|.$  In order to obtain the third one, we restrict ourselves to the case    of constant  curvature ambient spaces and we improve inequality \eqref{i},  by estimating   carefully  the  involved constants.

\begin{theorem} [{\bf Caccioppoli's inequality of type I}]
\label{caccio-theo}
Let $M$ be a complete noncompact hypersurface immersed with constant mean curvature $H$ in a manifold $\mathcal N.$  Assume $M$  has finite index. 
Then, there exist a compact subset $K$ of $M$   and   constants $\beta_1,$  $\beta_2,$  $\beta_3,$ such that for every $f\in C_0^{\infty}((M\setminus K)_+)$ and $q>-\frac{n+2}{2n}$

 \begin{equation}
\label{i3}
\beta_1\int_{(M\setminus K)_+} f^{2q+4}\varphi^{2q+4} \leq \beta_2 \int_{(M\setminus K)_+}  |\nabla f|^{2q+4}+\beta_3\int_{(M\setminus K)_+}  f^{2q+4}.
\end{equation}

Moreover:

(i)   the constant $\beta_1$ is positive  if and only if $|q|<\sqrt{\frac{2}{n}},$ 

(ii)  if, in addition, $q\geq 0,$ then we can replace $(M\setminus K)_+$ with $M\setminus K.$

 \end{theorem}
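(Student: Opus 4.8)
The plan is to start from the master inequality \eqref{i} of Theorem \ref{general-ineq-theo}, which already isolates a compact set $K$ outside of which $M$ is stable, and to absorb all the lower-order terms on the right-hand side into a single Caccioppoli-type estimate of the form \eqref{i3}. First I would fix $q>-\frac{n+2}{2n}$ and apply \eqref{i} with a test function of the form $f\varphi^{\,r}$ for a suitable power $r$, or equivalently rescale the cutoff, so that the left-hand side carries the factor $f^{2q+4}\varphi^{2q+4}$ rather than $f^2\varphi^{2q+2}\varphi^2$; the homogeneity of the exponents $2q+4$ on both $f$ and $\varphi$ in \eqref{i3} strongly suggests replacing the arbitrary $f\in C_0^\infty$ of \eqref{i} by $f^{q+2}$ (or by $f^{\,s}$ with $s$ chosen so that $2s=2q+4$), which turns $|\nabla f|^2$ into $f^{2q+2}|\nabla f|^2$ and, after a further Young-type step, into a clean $|\nabla f|^{2q+4}$ term. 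This is the mechanism that produces the $\beta_2\int|\nabla f|^{2q+4}$ on the right of \eqref{i3}.

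Next I would identify $\beta_1$ with the coefficient $\mathcal A$ of the $\varphi^{2q+4}$ term in \eqref{i}: the left-hand side of \eqref{i} contains $f^2\varphi^{2q+2}(\mathcal A\varphi^2+\mathcal B H\varphi+\mathcal C H^2+\mathcal E)$, and the leading piece $\mathcal A\,f^2\varphi^{2q+4}$ is precisely what becomes $\beta_1 f^{2q+4}\varphi^{2q+4}$ after the substitution above. The remaining terms $\mathcal B H\varphi$, $\mathcal C H^2$, $\mathcal E$ on the left, together with the $\mathcal F\int f^2\varphi^{2q+1}$ and $\mathcal G\int f^2\varphi^{2q}$ on the right of \eqref{i}, all involve strictly lower powers of $\varphi$ (namely $\varphi^{2q+3}$, $\varphi^{2q+2}$, $\varphi^{2q+1}$, $\varphi^{2q}$). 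I would handle each of these by Young's inequality, splitting a mixed power $\varphi^{a}$ with $2q<a<2q+4$ into a small multiple of $\varphi^{2q+4}$ (absorbed back into the left-hand side, using $\beta_1>0$) plus a large multiple of $\varphi^{2q}$; the latter is then bounded by a constant times $f^{2q+4}$ on the support where the cutoff construction keeps $\varphi$ and $f$ comparable, feeding the $\beta_3\int f^{2q+4}$ term. The claim \eqref{A-positive} of Remark \ref{A-positive-rem} gives exactly the statement that $\mathcal A>0$ iff $|q|<\sqrt{2/n}$, which is assertion (i); and assertion (ii) is inherited directly from the corresponding clause of Theorem \ref{general-ineq-theo}, since for $q\geq 0$ the integrals over $(M\setminus K)_+$ extend to $M\setminus K$.

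The main obstacle, as I see it, is the bookkeeping needed to convert the gradient term cleanly. In \eqref{i} the gradient enters as $\mathcal D\int\varphi^{2q+2}|\nabla f|^2$, but the target \eqref{i3} asks for $\int|\nabla f|^{2q+4}$ with no $\varphi$ weight; bridging these requires that after the substitution $f\rightsquigarrow f^{q+2}$ one applies Young's inequality to $\varphi^{2q+2}|\nabla(f^{q+2})|^2 \sim f^{2q+2}\varphi^{2q+2}|\nabla f|^2$, separating the $\varphi^{2q+2}$ weight (to be absorbed via the $\varphi^{2q+4}$/$\varphi^{2q}$ Young split) from a pure $|\nabla f|^{2q+4}$ factor. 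Getting the exponents to balance in this double application of Young's inequality — choosing conjugate exponents $\frac{q+2}{q+1}$ and $q+2$ so that the $\varphi$-powers land on $2q+4$ and $2q$ and the $|\nabla f|$-power lands on $2q+4$ — is the delicate step, and it is where the positivity $\beta_1=\mathcal A>0$ is essential to allow the absorption. Once these substitutions and Young splits are arranged so that every $\varphi$-power other than $2q+4$ and $2q$ is eliminated and the $\varphi^{2q+4}$ surplus is reabsorbed, the inequality collapses to \eqref{i3} with $\beta_1,\beta_2,\beta_3$ the resulting explicit combinations of $\mathcal A,\dots,\mathcal G$, $H$, $K_1$, $K_2$, and the Young parameters.
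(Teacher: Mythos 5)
Your overall architecture coincides with the paper's proof: start from \eqref{i}, keep only the leading term $\mathcal A\, f^2\varphi^{2q+4}$ on the left-hand side, treat the gradient term by Young's inequality with conjugate exponents $\tfrac{q+2}{q+1}$ and $q+2$ (on $M'=\{f\neq 0\}$, writing $\varphi^{2q+2}|\nabla f|^2=f^2\bigl[\varphi^{2q+2}|\nabla f|^2/f^2\bigr]\le \varepsilon_5 f^2\varphi^{2q+4}+\varepsilon_5^{-1}|\nabla f|^{2q+4}/f^{2q+2}$), substitute $f\mapsto f^{q+2}$ at the end so that the exponents close up, and obtain (i) from Remark \ref{A-positive-rem} and (ii) from the corresponding clause of Theorem \ref{general-ineq-theo}. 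This is exactly the paper's \eqref{young5} and \eqref{i2}, followed by the same substitution, with $\beta_1$ essentially $\mathcal A$ up to small Young parameters.

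There is, however, a genuine flaw in your treatment of the sub-leading powers of $\varphi$. You split $\varphi^{a}$, $2q<a<2q+4$, as $\varepsilon\varphi^{2q+4}+C_\varepsilon\varphi^{2q}$, and then claim that the resulting terms $\int f^{2q+4}\varphi^{2q}$ (including the $\mathcal G$-term of \eqref{i}, which already sits at the endpoint power $\varphi^{2q}$ and is untouched by your splitting) are bounded by a constant times $\int f^{2q+4}$ because ``the cutoff construction keeps $\varphi$ and $f$ comparable.'' That step fails: the theorem asserts \eqref{i3} for \emph{every} $f\in C_0^{\infty}((M\setminus K)_+)$ with $\beta_1,\beta_2,\beta_3$ independent of $f$, and $\varphi$ is a geometric quantity bearing no relation to an arbitrary test function; for $q\neq 0$ the bound $\int f^{2q+4}\varphi^{2q}\le C\int f^{2q+4}$ holds only with $C=\sup_{\mathrm{supp}\, f}\varphi^{2q}$, which depends on $f$. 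The correct device, and the one the paper uses, is to run Young's inequality all the way down to a \emph{pure constant}: $|\mathcal B|H\varphi^{2q+3}\le\varepsilon_1\varphi^{2q+4}+\varepsilon_1^{-1}(|\mathcal B|H)^{2q+4}$, $|\mathcal E+\mathcal C H^2|\varphi^{2q+2}\le\varepsilon_2\varphi^{2q+4}+\varepsilon_2^{-1}|\mathcal E+\mathcal C H^2|^{q+2}$, $\mathcal F\varphi^{2q+1}\le\varepsilon_3\varphi^{2q+4}+\varepsilon_3^{-1}\mathcal F^{(2q+4)/3}$, $|\mathcal G|\varphi^{2q}\le\varepsilon_4\varphi^{2q+4}+\varepsilon_4^{-1}|\mathcal G|^{(q+2)/2}$ (the paper's \eqref{young1}--\eqref{young4}), so that the leftovers are genuine constants multiplying $\int f^2$, which becomes $\beta_3\int f^{2q+4}$ after the substitution $f\mapsto f^{q+2}$. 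With that replacement your argument reproduces the paper's proof, with $\beta_1=\mathcal A-\sum_{i=1}^4\varepsilon_i-\mathcal D\varepsilon_5$.
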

 
 \begin{proof} Let $K$ be the compact set in $M$ such that $M\setminus K$ is stable. Let us first write \eqref{i}  taking only the  term with highest exponent of $\varphi$ in the left-hand side.

\begin{align}\label{i1}
&\notag \int_{(M\setminus K)_+}  {\mathcal A}f^{2}\varphi^{2q+4} \le {\mathcal D} \int_{(M\setminus K)_+} \varphi^{2q+2}|\nabla f|^{2}- {\mathcal B}H\int_{(M\setminus K)_+} f^2\varphi^{2q+3}\\
&-({\mathcal E}+{\mathcal C}H^2) \int_{(M\setminus K)_+}  f^{2}\varphi^{2q+2}+ {\mathcal F}\int_{(M\setminus K)_+}  f^{2} \varphi^{2q+1}+ {\mathcal G} \int_{(M\setminus K)_+} f^{2} \varphi^{2q}
\end{align}

We will transform all the  terms of the right-hand side of \eqref{i1}, using  Young's inequality, in order to obtain terms with $f^2\varphi^{2q+4},$ that can be reabsorbed by the left-hand side.  By Young's inequality one has

\begin{equation}
\label{young1}
|{\mathcal B}|H\varphi^{2q+3}\leq\varepsilon_1\varphi^{2q+4}+\frac{1}{\varepsilon_1}(|{\mathcal B}|H)^{2q+4} \ \ \ \ {\rm for\  any\   } \varepsilon_1>0,
\end{equation}

\begin{equation}
\label{young2}
-({\mathcal E}+{\mathcal C}H^2) \varphi^{2q+2}\leq \varepsilon_2\varphi^{2q+4}+\frac{1}{\varepsilon_2}|{\mathcal E}+{\mathcal C}H^2|^{q+2} \ \ \ \ {\rm for\  any\   } \varepsilon_2>0,
\end{equation}

\begin{equation}
\label{young3}
{\mathcal F}\varphi^{2q+1}\leq \varepsilon_3\varphi^{2q+4}+\frac{1}{\varepsilon_3}{\mathcal F}^{\frac{2q+4}{3}} \ \ \ \ { \rm for\  any\   } \varepsilon_3>0,
\end{equation}

\begin{equation}
\label{young4}
{\mathcal G} \varphi^{2q}\leq  \varepsilon_4\varphi^{2q+4}+\frac{1}{\varepsilon_4}|{\mathcal G}|^{\frac{q+2}{2}}\ \ \ \ {\rm for\  any\   } \varepsilon_4>0.
\end{equation}

Define $M'=\{ p\in M\  | \ f(p)\not=0\}.$ Then, on $M'$
\begin{align}
\label{young5}
&\notag\varphi^{2q+2}|\nabla f|^2=f^2\left[\varphi^{2q+2}\frac{|\nabla f|^2}{f^2}\right]
\\
&\leq \varepsilon_5f^2\varphi^{2q+4}+\frac{1}{\varepsilon_5}\frac{|\nabla f|^{2q+4}}{f^{2q+2}} \ \ \ \ {\rm for\  any\   } \varepsilon_5>0.
\end{align}

Now,  we replace \eqref{young1},  \eqref{young2},  \eqref{young3},  \eqref{young4},  \eqref{young5} in  \eqref{i1} and we get

\begin{align}\label{i2}
&({\mathcal A}-\sum_{i=1}^{4}\varepsilon_i-{\mathcal D}\varepsilon_5)\int_{(M\setminus K)_+}  f^{2}\varphi^{2q+4} \leq \frac{{\mathcal D}}{\varepsilon_5} \int_{(M\setminus K)_+\cap M'}\frac{|\nabla f|^{2q+4}}{f^{2q+2}}\\
&\notag+ \left(\frac{1}{\varepsilon_1}(|{\mathcal B}|H)^{2q+4}+\frac{1}{\varepsilon_2}|{\mathcal E}+{\mathcal C}H^2|^{q+2}+\frac{1}{\varepsilon_3}{\mathcal F}^{\frac{2q+4}{3}}+\frac{1}{\varepsilon_4}|{\mathcal G}|^{\frac{q+2}{2}}\right)
\int_{(M\setminus K)_+} f^2.\end{align}

One obtains inequality \eqref{i3}, after replacing  $f$ by $f^{q+2}$ in \eqref{i2} and letting 

$$\beta_1={\mathcal A}-\sum_{i=1}^{4}\varepsilon_i-{\mathcal D}\varepsilon_5,\    \  \beta_2=(q+2)^{2q+4}\frac{\mathcal D}{\varepsilon_5},$$
$$\beta_3=\frac{1}{\varepsilon_1}(|{\mathcal B}|H)^{2q+4}+\frac{1}{\varepsilon_2}|{\mathcal E}+{\mathcal C}H^2|^{q+2}+
\frac{1}{\varepsilon_3}{\mathcal F}^{\frac{2q+4}{3}}+\frac{1}{\varepsilon_4}|{\mathcal G}|^{\frac{q+2}{2}}.$$

Choosing   $\varepsilon_1,$ $\varepsilon_2,$ $\varepsilon_3,$    small enough and using Remark \ref{A-positive-rem}  one obtains (i). 
(ii) follows in the same way as in Theorem \ref{general-ineq-theo}.
\end{proof}

\begin{remark}
\label{growth-remark}
If $M$ is stable  and $q\geq 0,$ then inequality  \eqref{i3}  holds on $M$  for any $f\in C_0^{\infty}(M).$ Therefore, fixing  $t\in(0,1)$ and  
choosing a radial  function $f$  such that $f\equiv1$   on  the geodesic ball  $B_{ tR}$, $f\equiv 0$ on $M\setminus B_R$ and $f$ is linear on the annulus $B_R\setminus B_{ tR},$ 
one has 
\begin{equation}
\label{caccio-ball}
\beta_1\int_{B_{t R}} \varphi^{2q+4}\leq |B_R|\left(\frac{\beta_2}{(1-t)^{2q+4}R^{2q+4}}+\beta_3\right).
\end{equation}

Inequality \eqref{caccio-ball} yields interesting relations between $\int_M\varphi^{2q+4}$ and the volume entropy of $M$ (see \cite{INS}).

\end{remark}

Now we prove  a Caccioppoli's inequality involving the gradient of the norm of  $\varphi.$

 \begin{theorem}[{\bf Caccioppoli's  inequality of type II}]\label{caccio-nabla}
 Let $M$ be a complete noncompact hypersurface immersed with constant mean curvature $H$ in a manifold $\mathcal N.$  Assume $M$  has finite index. 
 Then, there exist  a compact subset $K$ of $M$ and positive constants $\beta_4,$ $\beta_5$  $\beta_6$ such that, for any function $f\in C_0((M\setminus K)_+)$ and $q>-\frac{n+2}{2n}$

\begin{align}
\label{l1}
{\mathcal A}\int_{(M\setminus K)_+}f^{2}|\nabla\varphi|^{2q}&\leq \beta_4\int_{(M\setminus K)_+} |\nabla f|^{2}\varphi^{2q+2}\notag\\
&+\beta_5\int_{(M\setminus K)_+} f^2\varphi^{2q+3}+\beta_6\int_{(M\setminus K)_+}  f^2.
\end{align}
Moreover:

(i)   the constant $\mathcal A$ is positive  if and only if $|q|<\sqrt{\frac{2}{n}},$ 

(ii)  if, in addition, $q\geq 0,$ then we can replace $(M\setminus K)_+$ with $M\setminus K.$

\end{theorem}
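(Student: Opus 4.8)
The plan is to mimic the proof of Caccioppoli's inequality of type I (Theorem \ref{caccio-theo}), but instead of discarding the gradient term early on, I would keep the term $\int \varphi^{2q}f^2|\nabla\varphi|^2$ and make it the quantity to be estimated. Recall that the whole machinery of Theorem \ref{general-ineq-theo} was built by combining the two key integral inequalities \eqref{c} and \eqref{h}, each of which features the gradient integral $\int_{(M\setminus K)_+}\varphi^{2q}f^2|\nabla\varphi|^2$. In Theorem \ref{general-ineq-theo} this gradient term was \emph{eliminated} by taking the linear combination $(q+1)(q+1+\tilde\varepsilon)\eqref{c}+(\tfrac{2}{n(1+\epsilon)}+(2q+1)-\tilde\varepsilon)\eqref{h}$. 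Here I would instead \emph{solve for} it. Concretely, first I would take $K$ to be the compact set of Proposition \ref{stability-index-compact} so that $M\setminus K$ is stable, and I would work on $(M\setminus K)_+$ (dropping to $M\setminus K$ when $q\ge 0$, exactly as before).

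The main step is to isolate $\int f^2|\nabla\varphi|^{2q}$—wait, more precisely the statement has $f^2|\nabla\varphi|^{2q}$, but the natural quantity produced by the Simons machinery is $\int\varphi^{2q}f^2|\nabla\varphi|^2$; I read the intended left-hand side as ${\mathcal A}\int f^2\varphi^{2q}|\nabla\varphi|^2$. Starting from \eqref{c}, the coefficient of $\int\varphi^{2q}f^2|\nabla\varphi|^2$ on the left is $(\tfrac{2}{n(1+\varepsilon)}+(2q+1)-\tilde\varepsilon)$, and its right-hand side already contains the gradient-of-$f$ term $\tfrac1{\tilde\varepsilon}\int|\nabla f|^2\varphi^{2q+2}$ together with the three lower-order terms in $\varphi^{2q+4},\,\varphi^{2q+3},\,\varphi^{2q+2},\,\varphi^{2q+1},\,\varphi^{2q}$. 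The obstacle is that \eqref{c} by itself does not control the top-order term $\int\varphi^{2q+4}f^2$, so I would use the stability-derived inequality \eqref{h} to trade it away: \eqref{h} bounds $-(q+1)(q+1+\tilde\varepsilon)\int\varphi^{2q}f^2|\nabla\varphi|^2$ from above by $|\nabla f|^2$-terms minus exactly $\int f^2\varphi^{2q+2}(\varphi^2+n(H^2+K_2))$, which contains the coveted $\varphi^{2q+4}$ with a favorable sign. Adding a suitable positive multiple of \eqref{h} to \eqref{c} cancels $\int\varphi^{2q+4}f^2$ and leaves, on the left, a positive multiple of the gradient integral whose coefficient is (up to $\varepsilon,\tilde\varepsilon\to 0$) precisely $\mathcal A=(\tfrac{2}{n}+2q+1)-(q+1)^2$; this is why the positivity condition is again $|q|<\sqrt{2/n}$ via Remark \ref{A-positive-rem}.

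After this cancellation the surviving right-hand side consists of an $\int|\nabla f|^2\varphi^{2q+2}$ term (giving $\beta_4$), an $\int f^2\varphi^{2q+3}$ term coming from the $a_1H$ Okumura contribution (giving $\beta_5$), and the remaining lower-order $\varphi^{2q+2},\,\varphi^{2q+1},\,\varphi^{2q}$ terms. To collapse these last three into the single constant-coefficient term $\beta_6\int f^2$, I would apply Young's inequality to each, as in \eqref{young1}--\eqref{young4}, absorbing the resulting small multiples of $\int f^2\varphi^{2q+4}$; but here there is no $\int f^2\varphi^{2q+4}$ term left on either side to reabsorb into, so instead I would absorb them against the $\varphi^{2q+3}$ and (harmlessly) bound the purely polynomial-in-$H,K_1,K_2,K'$ prefactors, pushing all $\varphi$-free remainders into $\beta_6\int f^2$. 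The technically delicate point is checking that every Young split produces only terms already present on the right (namely $\varphi^{2q+3}$ and constants), so that nothing of order $\varphi^{2q+4}$ appears that would have to be moved back to the left; since the left-hand side here is the gradient integral and carries no $\varphi^{2q+4}$, one must be careful that the exponents match, which they do because $2q+2,2q+1,2q$ all interpolate between $2q$ and $2q+3$. Finally, parts (i) and (ii) follow verbatim as in Theorem \ref{caccio-theo}: (i) from the sign analysis of $\mathcal A$ in Remark \ref{A-positive-rem}, and (ii) from the same observation used in Theorem \ref{general-ineq-theo} that for $q\ge 0$ one may enlarge the domain of integration from $(M\setminus K)_+$ to $M\setminus K$.
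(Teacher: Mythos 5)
Your proposal is correct and is essentially the paper's own proof: the paper likewise sums \eqref{c} and \eqref{h} (your ``suitable positive multiple'' of \eqref{h} is exactly $1$), which cancels the $\varphi^{2q+4}$ terms and leaves the gradient integral on the left with coefficient exactly $\mathcal A$, and then applies Young's inequality, as in Theorem \ref{caccio-theo}, to reduce the $f^2\varphi^{2q+2}$, $f^2\varphi^{2q+1}$, $f^2\varphi^{2q}$ terms to a combination of $f^2\varphi^{2q+3}$ and $f^2$, with (i) and (ii) obtained as in Theorem \ref{caccio-theo}. Your reading of the left-hand side as ${\mathcal A}\int f^2\varphi^{2q}|\nabla\varphi|^2$ (correcting the typo $|\nabla\varphi|^{2q}$) is also what the paper's proof actually establishes.
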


\begin{proof}

We sum the two inequalities \eqref{c} and \eqref{h} and we obtain

\begin{align}
\label{l}
&\left(\frac{2}{n(1+\varepsilon)}+(2q+1)-\tilde\varepsilon-(q+1)(q+1+\tilde\varepsilon)\right)\int_{(M\setminus K)_+} \varphi^{2q}f^{2}|\nabla\varphi|^{2}\notag\\
\leq &\left(1 +\frac{q+2}{\tilde\varepsilon}\right)\int_{(M\setminus K)_+}  |\nabla f|^{2}\varphi^{2q+2}-(2nH^2+a_2+nK_2)\int_{(M\setminus K)_+}  f^2\varphi^{2q+2}\notag\\
&+a_1H\int_{(M\setminus K)_+}  f^2\varphi^{2q+3}+2K'\int_{(M\setminus K)_+} \varphi^{2q+1}f^2-a_3\int_{(M\setminus K)_+} \varphi^{2q}f^2.
\end{align}

Using Young's inequality, as in the proof of  Theorem \ref{caccio-theo}, we reduce the terms containing $f^2\varphi^{2q},$ $f^2\varphi^{2q+1},$ $f^2\varphi^{2q+2},$   to a sum of terms containing $f^2\varphi^{2q+3},$ and $f^2.$ 

Then, there exist constants $\beta_5,$ $\beta_6,$ such that 

\begin{align*}
{\mathcal A}\int_{(M\setminus K)_+} \varphi^{2q}f^{2}|\nabla\varphi|^{2}&\leq \left(1 +\frac{q+2}{\tilde\varepsilon}\right)\int_{(M\setminus K)_+}  |\nabla f|^{2}\varphi^{2q+2}\notag\\
&+\beta_5\int_{(M\setminus K)_+}  f^2\varphi^{2q+3}+\beta_6\int_{(M\setminus K)_+}  f^2.
\end{align*}

Now we choose $\beta_4=(q+1)^{-2}\left(1 +\frac{q+2}{\tilde\varepsilon}\right)$ and we are done.

(i) and (ii) are obtained as in Theorem \ref{caccio-theo}.

\end{proof}

\begin{remark} 
 If $K_1=K_2=c$ and $H^2+c\geq 0,$ one obtains $\beta_6=0,$  so the integral in \eqref{l1} that does not contain $\varphi$ disappears. 
\end{remark}

When the ambient manifold $\mathcal N$ has  constant  curvature, by studying  carefully the sign of the  constants involved in \eqref{i}, one  obtains Caccioppoli's inequalities of type III. Let us start with the minimal case. 

In the following and without loss of generality, when the ambient space $\mathcal{N}$ has constant curvature $c,$ the constant $c$ will be  
-1, 0, +1,  according to its sign.

\begin{theorem} [{\bf Caccioppoli's inequality of type III - $H=0$}]
\label{caccio-min-theo}

Let $M$ be a complete noncompact minimal hypersurface immersed  in a manifold $\cal N$  with nonnegative constant  curvature $c.$  Assume 
$M$ has finite index. Then, there exists a compact subset $K$ of $M$  such that,  for any $f\in C^{\infty}_0(M\setminus K),$ one has

\begin{equation}
\label{jmin-bis}
{\mathcal A}\int_{M\setminus K}f^2 |A|^{2x+2}\leq {\mathcal D}\int_{M\setminus K}|A|^{2x}|\nabla f|^2
\end{equation}

provided $x\in\left[1,1+\sqrt{\frac{2}{n}}\ \right).$ 

Moreover if $x\in \left(1-\sqrt{\frac{2}{n}}, 1+\sqrt{\frac{2}{n}}\ \right)$ an  inequality analogous to \eqref{jmin-bis} holds with $(M\setminus K)_+$ instead of 
$M\setminus K.$ 
\end{theorem}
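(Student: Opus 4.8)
The plan is to specialize the general inequality \eqref{i} to the minimal, constant-curvature case and then reinterpret everything in terms of $|A|$ rather than $\varphi$. When $H=0$ the traceless part $\Phi$ coincides (up to the scalar mean-curvature term) with the full second fundamental form, so that $\varphi=|\phi|=|A|$; this is the observation that converts inequality \eqref{i3} of Theorem \ref{caccio-theo} into a statement about $|A|$. First I would set $H=0$ throughout the constants $\mathcal A,\dots,\mathcal G$ of Theorem \ref{general-ineq-theo}. This immediately kills several terms: the coefficient $\mathcal B$ of $H\varphi$ and the coefficient $\mathcal C$ of $H^2$ no longer contribute, and the curvature constant $K'$ vanishes because $\mathcal N$ has constant curvature, forcing $\mathcal F=0$. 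With $K_1=K_2=c$ one also gets $a_3=0$, hence $\mathcal G=0$. Thus the right-hand side of \eqref{i} collapses to the single gradient term $\mathcal D\int\varphi^{2q+2}|\nabla f|^2$.

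The next step is to track the constant $\mathcal E$ in this reduced setting. With $H=0$ and $K_1=K_2=c$ one has $a_2=n(2K_2-K_1)=nc$, and $\mathcal E=nc\,(q+1)(q+1+\tilde\varepsilon)+nc\bigl(\tfrac{2}{n(1+\varepsilon)}+(2q+1)-\tilde\varepsilon\bigr)$. Since $c\ge 0$ by hypothesis, $\mathcal E\ge 0$, and moving the $\mathcal E$-term to the left side only \emph{strengthens} the resulting inequality: one obtains
\begin{equation*}
\mathcal A\int_{M\setminus K}f^2\varphi^{2q+4}\le \mathcal D\int_{M\setminus K}\varphi^{2q+2}|\nabla f|^2,
\end{equation*}
which after the substitution $\varphi=|A|$ is exactly \eqref{jmin-bis} upon writing $x=q+1$. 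So the desired exponent shift is purely notational: $|A|^{2q+4}=|A|^{2x+2}$ and $|A|^{2q+2}=|A|^{2x}$ with $x=q+1$.

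It then remains to translate the constraints on $q$ into constraints on $x$ and to verify the sign conditions. The hypothesis $q>-\tfrac{n+2}{2n}$ of Theorem \ref{general-ineq-theo} guarantees the linear combination producing \eqref{i} is valid; combined with the requirement $q\ge 0$ (needed in order to integrate over $M\setminus K$ rather than $(M\setminus K)_+$), this becomes $x\ge 1$. For the leading coefficient $\mathcal A$ to be positive, Remark \ref{A-positive-rem} gives $|q|<\sqrt{2/n}$, i.e. $1-\sqrt{2/n}<x<1+\sqrt{2/n}$. Intersecting $x\ge 1$ with this window yields precisely $x\in[1,1+\sqrt{2/n})$, the stated range for the $M\setminus K$ version; dropping the $q\ge 0$ requirement and retaining only $\mathcal A>0$ gives the wider interval $x\in(1-\sqrt{2/n},1+\sqrt{2/n})$ with the domain $(M\setminus K)_+$. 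The main obstacle, such as it is, is bookkeeping: one must confirm that every term one discards or absorbs on the left has the correct (nonnegative) sign under $c\ge 0$ and $H=0$, so that the inequality is genuinely implied rather than merely formally rearranged. Everything else follows directly from Theorem \ref{general-ineq-theo} and Remark \ref{A-positive-rem}, so no new analytic input is required.
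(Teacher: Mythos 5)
Your proposal is correct and follows essentially the same route as the paper: the paper's proof (given jointly for Theorems \ref{caccio-min-theo} and \ref{caccio-H}) likewise specializes inequality \eqref{i} with $K_1=K_2=c$, $K'=0$, hence $\mathcal F=\mathcal G=0$ and $\mathcal E=c\,\mathcal C\ge 0$, discards the nonnegative curvature term, identifies $\varphi=|A|$ since $H=0$, and converts the constraints $q\ge 0$ (respectively $q>-\tfrac{n+2}{2n}$) and $\mathcal A>0$ from Remark \ref{A-positive-rem} into $x=q+1\in[1,1+\sqrt{2/n})$ (respectively $x\in(1-\sqrt{2/n},1+\sqrt{2/n})$ on $(M\setminus K)_+$). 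The only cosmetic difference is that the paper sets $\varepsilon=0$ in Kato's inequality outright, whereas you keep $\varepsilon$ generic and rely on the continuity argument of Remark \ref{A-positive-rem}, which changes nothing of substance.
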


\begin{remark} In the case  where $M$ is stable, the analogous of inequality \eqref{jmin-bis} in ${\mathbb R}^{n+1}$ was proved by M. do Carmo and C. K. Peng  
\cite{DP1}.
\end{remark}

Before stating   next Theorem, we give two definitions:

\nero  For $\gamma=\frac{n-2}{n},$ $\mu=\frac{n^2}{4(n-1)}, $   let $g$ be the following function
\begin{equation}
\label{function-g}
g_n(x)=\frac{(2x-\gamma)^2-x^4}{(2x-\gamma)^2-\mu x^4}
\end{equation}

\nero Let $x_1$ and $x_2$ be the following real numbers

\begin{equation}
\label{roots}
x_1=\frac{2\sqrt{n-1}}{n} \left(1-\sqrt{1-\frac{n-2}{2\sqrt{n-1}}}\right), \  x_2=\frac{2\sqrt{n-1}}{n} \left(1+\sqrt{1-\frac{n-2}{2\sqrt{n-1}}}\right)
\end{equation}

\begin{theorem} [{\bf Caccioppoli's inequality of type III - $H\not=0$}]
\label{caccio-H}
Let $M$ be a complete  noncompact hypersurface immersed with constant mean curvature $H\not=0,$ in a manifold $\cal N$ with  constant  curvature $c.$ Assume 
$M$ has finite index  and $n\leq 5.$   
Then there exist  a compact subset $K$ in $M$  and a constant $\gamma$ such that,  for any $f\in C^{\infty}_0(M\setminus K)$

\begin{equation}
\label{j-bis}
\gamma\int_{M\setminus K} f^2 \varphi^{2x}\leq {\mathcal D}\int_{M\setminus K} \varphi^{2x}|\nabla f|^2
\end{equation}

provided either 

(1)  $c\geq0,$  $x\in[1,x_2).$

or 

(2)   $c=-1,$  $\varepsilon>0,$  $x\in[1, x_2-\varepsilon],$ $H^2\geq g_n(x).$

Moreover, if $n\leq 6$  and $x\in (x_1,x_2)$ in (1) (respectively $x\in [x_1+\varepsilon,x_2-\varepsilon]$ in (2)),  
 an  inequality analogous to \eqref{j-bis} holds with $(M\setminus K)_+$ instead of 
$M\setminus K.$ 
\end{theorem}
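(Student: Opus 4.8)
The plan is to specialize the master inequality \eqref{i} to the constant curvature setting and then to show that, for the admissible exponents, the quadratic polynomial in $\varphi$ sitting on the left-hand side is bounded below by a positive constant, which can then be factored out. When $\mathcal N$ has constant curvature $c$ one has $K_{1}=K_{2}=c$, hence $K'=0$, $a_{2}=nc$ and $a_{3}=0$; consequently $\mathcal F$ and $\mathcal G$ vanish and \eqref{i} collapses to
\[
\int_{(M\setminus K)_+} f^{2}\varphi^{2q+2}\,P(\varphi)\;\le\;\mathcal D\int_{(M\setminus K)_+}\varphi^{2q+2}|\nabla f|^{2},\qquad P(\varphi):=\mathcal A\varphi^{2}+\mathcal B H\varphi+(\mathcal C H^{2}+\mathcal E).
\]
Setting $x=q+1$ turns the gradient term into $\mathcal D\int\varphi^{2x}|\nabla f|^{2}$, exactly the right-hand side of \eqref{j-bis}. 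Thus it suffices to prove $P(\varphi)\ge\gamma>0$ for every $\varphi\ge0$: then $\gamma\int f^{2}\varphi^{2x}\le\int f^{2}\varphi^{2q+2}P(\varphi)$ and \eqref{j-bis} follows at once. Whether the integration is over $(M\setminus K)_+$ or $M\setminus K$ is dictated by Theorem \ref{general-ineq-theo}: for $q\ge0$, i.e. $x\ge1$, one may use $M\setminus K$, otherwise $(M\setminus K)_+$.

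Next I would analyze $P$ as a quadratic in $\varphi$. By Remark \ref{A-positive-rem} the leading coefficient $\mathcal A$ is positive exactly when $|q|<\sqrt{2/n}$, that is $x\in(1-\sqrt{2/n},1+\sqrt{2/n})$; there $P$ is convex and, since $\mathcal B<0$ and $H>0$, its vertex $\varphi^{*}=-\mathcal B H/(2\mathcal A)$ is positive, so the minimum of $P$ over $\varphi\ge0$ equals $(\mathcal C H^{2}+\mathcal E)-\mathcal B^{2}H^{2}/(4\mathcal A)$. As all coefficients depend continuously on $\varepsilon,\tilde\varepsilon$, I would compute this minimum in the limit $\varepsilon=\tilde\varepsilon=0$, writing $A_{0},\mathcal C_{0},\mathcal E_{0},\mathcal B_{0}$ for the limiting coefficients and $\bar\gamma=\tfrac{n-2}{n}$, $\mu=\tfrac{n^{2}}{4(n-1)}$ for the constants of \eqref{function-g}. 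The computation rests on the algebraic identities $2x-\bar\gamma=A_{0}+x^{2}$, $\mathcal C_{0}=n(A_{0}+2x^{2})$, $A_{0}\mathcal C_{0}=n[(2x-\bar\gamma)^{2}-x^{4}]$, together with $\mathcal B_{0}^{2}=a_{1}^{2}x^{4}$ and $4+\tfrac{(n-2)^{2}}{n-1}=\tfrac{n^{2}}{n-1}$, which combine to give $4A_{0}\mathcal C_{0}-a_{1}^{2}x^{4}=4n[(2x-\bar\gamma)^{2}-\mu x^{4}]$ and $\mathcal C_{0}H^{2}+\mathcal E_{0}=\mathcal C_{0}(H^{2}+c)$. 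Substituting yields
\[
\min_{\varphi\ge0}P=\frac{H^{2}}{4A_{0}}\,4n\big[(2x-\bar\gamma)^{2}-\mu x^{4}\big]+c\,\mathcal C_{0},
\]
so, since $\mathcal C_{0}=n(A_{0}+2x^{2})>0$ whenever $A_{0}>0$, the sign of the minimum is controlled by $(2x-\bar\gamma)^{2}-\mu x^{4}$ and by $c$.

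The two cases then fall out. For $c\ge0$ the added term $c\,\mathcal C_{0}$ is nonnegative, so $\min P>0$ as soon as $(2x-\bar\gamma)^{2}-\mu x^{4}>0$; solving $\sqrt\mu\,x^{2}-2x+\bar\gamma<0$ (with $2x-\bar\gamma>0$) produces precisely the interval $(x_{1},x_{2})$ of \eqref{roots}, whose endpoints are real iff $1-\tfrac{n-2}{2\sqrt{n-1}}\ge0$, i.e. iff $n\le6$. For $c=-1$ the inequality $\min P>0$ reads $H^{2}n[(2x-\bar\gamma)^{2}-\mu x^{4}]>n[(2x-\bar\gamma)^{2}-x^{4}]$, i.e. $H^{2}>g_{n}(x)$, exactly condition (2). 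Evaluating the endpoint $x=1$, where $(2-\bar\gamma)^{2}-\mu>0$ reduces to $4(n-1)(n+2)^{2}>n^{4}$, one finds this holds iff $n\le5$; this is why the range $[1,x_{2})$, which includes $x=1$ and forces $q\ge0$ (hence permits $M\setminus K$), requires $n\le5$, while the open version on $(M\setminus K)_+$ needs only $(x_{1},x_{2})$ nonempty, namely $n\le6$. To descend from the limiting computation to genuine small $\varepsilon,\tilde\varepsilon$ I would invoke continuity of the coefficients; this forces the admissible $x$ in case (2) into the compact subintervals $[1,x_{2}-\varepsilon]$ and $[x_{1}+\varepsilon,x_{2}-\varepsilon]$, on which a uniform positive lower bound $\gamma$ persists.

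The main obstacle is the bookkeeping of the second step: isolating the $\varepsilon=\tilde\varepsilon=0$ limit cleanly, verifying the nonobvious identities that collapse the vertex value into $(2x-\bar\gamma)^{2}-\mu x^{4}$ and $g_{n}(x)$, and confirming that $\mathcal A>0$ (equivalently $A_{0}>0$, by \eqref{A-positive}) holds throughout the intervals $(x_{1},x_{2})$ appearing in (1) and (2)—so that the vertex is genuinely the minimum—before concluding, by continuity, that a strictly positive $\gamma$ survives for small $\varepsilon,\tilde\varepsilon$.
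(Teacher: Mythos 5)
Your proposal is correct and follows essentially the same route as the paper: both specialize inequality \eqref{i} to constant curvature (so ${\mathcal F}={\mathcal G}=0$, ${\mathcal E}=c\,{\mathcal C}$), reduce the claim to the positivity of the quadratic ${\mathcal A}\varphi^{2}+{\mathcal B}H\varphi+{\mathcal C}(H^{2}+c)$ at $\varepsilon=\tilde\varepsilon=0$ by continuity, and the same case analysis produces $x_{1},x_{2}$, the constraint $n\le 6$ (resp.\ $n\le 5$ for $1\in(x_{1},x_{2})$), and $H^{2}>g_{n}(x)$ when $c=-1$. Your vertex-minimum formulation is just the discriminant condition $\Delta<0$ of the paper in disguise (the minimum equals $-\Delta/4{\mathcal A}$), and your algebraic identities ${\mathcal A}_{0}{\mathcal C}_{0}=n[(2x-\bar\gamma)^{2}-x^{4}]$ and $4{\mathcal A}_{0}{\mathcal C}_{0}-a_{1}^{2}x^{4}=4n[(2x-\bar\gamma)^{2}-\mu x^{4}]$ check out.
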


The Caccioppoli's  inequality of type III for $H\not=0$ is strongly different from the corresponding inequality for minimal hypersurfaces. 
Indeed, the power of $\varphi$ in the left hand-side term is $2x$ while in the minimal case it is $2x+2.$




\begin{proof} [Proof of Theorems  \ref{caccio-min-theo}, \ref{caccio-H}.]

 It is worthwhile to write here inequality \eqref{i}   and the value of the constant involved.   One has $K_{1}=K_{2}=c,$  $K'=0,$   $a_1=\frac{n(n-2)}{\sqrt{n(n-1)}},$ $a_2=nc,$ $a_3=0.$  
Furthermore,  we can take $\varepsilon=0$ in the Kato's inequality. Therefore, we get

$${\mathcal F}=0,\;\; {\mathcal G}=0 $$

$${\mathcal A}= \big(\frac{2}{n}+(2q+1)-\tilde\varepsilon\big)-(q+1)(q+1+\tilde\varepsilon).$$

$${\mathcal B}= -a_1\,(q+1)(q+1+\tilde\varepsilon).$$

$${\mathcal E}=c\ {\mathcal C}=cn\left[\left(\frac{2}{n}+(2q+1)-\tilde\varepsilon\right)+\,(q+1)(q+1+\tilde\varepsilon)\right].$$

 Then, inequality \eqref{i} yields (notice that we are assuming  $q\geq 0$)

\begin{equation}\label{j}
\int_{M\setminus K} f^{2}\varphi^{2q+2}({\mathcal A}\varphi^{2}+{\mathcal B}H\varphi+{\mathcal C}(H^{2}+c)) \le {\mathcal D} \int_{M\setminus K}\varphi^{2q+2}|\nabla f|^{2}
\end{equation}

We must find conditions on ${\mathcal A},$ $\mathcal B,$ $\mathcal C,$ $c,$ $H$ such that  the coefficient $\beta_7={\mathcal A}\varphi^{2}+{\mathcal B}H\varphi+{\mathcal C}(H^{2}+c)$  in \eqref{j} is positive. 
It is enough to do the computation for $\tilde\varepsilon=0,$ because  all the quantities are continuous with respect to  $\tilde\varepsilon.$  
We notice that, in some cases, 
the coefficient $\beta_7$ is positive without any condition on $H,$ while in some other cases, we have to look for  positiveness of $\beta_7$ 
provided  $H$ satisfies some conditions.

In order to simplify notation, we  let $x=q+1.$ Condition  \eqref{A-positive}  for the positiveness of $\mathcal A$ in terms of $x$ is
\begin{equation}
\label{condition-x}
\alpha_1:=1-\sqrt{\frac{2}{n}}<x<\alpha_2:=1+\sqrt{\frac{2}{n}}
\end{equation}

Let us study the different cases ($c=0,-1,1,$ $H=0,$ $H\not=0$).
\medskip

 (1) $c=0.$

 \nero $H=0:$ in this case  $\beta_7={\mathcal A}\varphi^{2}.$ We only need ${\mathcal A}=\displaystyle{-x^{2}+2x-(\frac{n-2}{n})}>0,$ 
so $x$ must satisfy condition \eqref{condition-x}.

\nero $H\not=0:$  in this case  $\beta_7={\mathcal A}\varphi^{2}+{\mathcal B}H\varphi+{\mathcal C}H^{2}.$ The quantity 
$\beta_7$ is positive for  any  value of $\varphi$ if and only if ${\mathcal A}>0,$  and 
$\Delta_0=H^2({\mathcal B}^2-4{\mathcal AC})< 0.$  As 
 
\begin{equation*}
\Delta_0=nH^{2}\left(x^{4}\frac{n^{2}}{n-1}-4\left(2x-\frac{n-2}{n}\right)^{2}\right),
\end{equation*}

then, $\Delta_0<0$ if and only if

$$x^4\frac{n^2}{n-1}< 4\left(2x-\frac{n-2}{n}\right)^2.$$

Condition \eqref{condition-x} guarantees that $x>\frac{n-2}{2n},$ then the previous inequality is equivalent to 

\begin{equation}
\label{polynomial}
x^2\frac{n}{\sqrt{n-1}}-4x+\frac{2(n-2)}{n}< 0.
\end{equation}

The  discriminant  of the polynomial in \eqref{polynomial} is positive if and only if  $n\leq 6.$

Then inequality \eqref{polynomial} is satisfied for  $n\leq 6$ and $x\in(x_1, x_2)$ where  $x_1$ and $x_2$ are defined in \eqref{roots} and 
are the roots of the polynomial in \eqref{polynomial}.

We observe that the value 1 is  contained in $ (x_1,x_2),$ if and only if $n\leq 5.$ Moreover, $\alpha_1\leq x_1< x_2\leq\alpha_2 $ with equality when $n=2.$
Therefore, for $n=2,$ the range of   $x$ is the same for any $H\geq 0.$ 
The conditions on $x$ are summed up in Table 1.

\


(2)  $c=-1.$

\nero $H\geq 0:$ in this case  $\beta_7={\mathcal A}\varphi^2+{\mathcal B}H\varphi+{\mathcal C}(H^2-1).$  
In order to study the positiveness of $\beta_7$  we compute the discriminant $\Delta_{-1}=({\mathcal B}^2-4{\mathcal AC})H^2+4{\mathcal AC}.$ 
The only case when one does not have a condition on $\varphi$ is ${\mathcal A}>0,$ $\Delta_{-1}< 0.$ We observe that for $H=0$ the last two conditions are never satisfied together. 

Looking at  Table 1, in order to have  ${\mathcal A}>0,$ $\Delta_{-1}< 0,$  one needs that $x\in (x_1, x_2)$  and $H^2>g_n(x), $ where  $g_n$ is the function that we 
have defined in \eqref{function-g}.
As  the supremum of $g_n$ on $(x_1,x_2)$  is $+\infty,$   in order to have some result one needs to restrict the interval of $x$ to  
$[x_1+\varepsilon,x_2-\varepsilon],$ for some positive $\varepsilon.$

\

(3)  $c=1.$

\nero $H=0:$ in this case,   $\beta_7={\mathcal A}\varphi^2+{\mathcal C},$ is positive for any value of $\varphi$ if and only if $x\in[\alpha_1,\alpha_2]$ (notice that ${\mathcal C}>0$).  

\nero  $H\not=0:$ in this case $\beta_7={\mathcal A}\varphi^2-{\mathcal B}H\varphi+{\mathcal C}(H^2+1).$   The quantity $\beta_7$ is positive for any value of $\varphi$ if and only if 
${\mathcal A}>0$ and $\Delta_1=H^2({\mathcal B}^2-4{\mathcal AC})-4{\mathcal A}{\mathcal C}<0.$
The two conditions are  verified   for any value of $H$ if $x\in(x_1,x_2).$ While, if $x\not\in(x_1,x_2)$ one needs $H^2\leq \frac{4{\mathcal A}{\mathcal C}}{{\mathcal B}^2-4{\mathcal A}{\mathcal C}}=-g_n(x)$ (see Table 1).

\begin{center}
\begin{variations}
x &\frac{n-2}{2n}&    &\alpha_{1}& & x_{1}&    &\frac{n-2}{n}&   &x_{2}&    &\alpha_{2}&    & +\infty \\
\filet
g_n'(x) &\z& &~-~& &\bb&~-~&\z&~+~&  \bb&  &~~+~~&      \\
\filet
g_n(x) & \h{\mu} &\d &  &-\infty  &\bb&   \h\pI &\d g(\frac{n-2}{n}) \c & \h\pI &\bb & -\infty&& \c &&&\h {\mu}\\
\filet
{\mathcal A} & \l& -& \z& &+&  &+&  &+& &\z &-&  \\
\filet
{\mathcal B}^{2}-4{\mathcal AC} & \l  &~~+~~& & &\z&  &~~-~~&  &\z &&~~+~~& \\
\end{variations}
\end{center}
\ 

\centerline {Table  1}
\

The results of Theorems \ref{caccio-min-theo}, \ref{caccio-H},  are obtained just putting together the previous estimates.

\end{proof}


Now we refine Caccioppoli's inequalities of type III (Theorem  \ref{caccio-min-theo} and \ref{caccio-H}) in order to allow more general exponents 
of $\varphi.$ This will be useful in Section \ref{applications}.

\begin{theorem} 
\label{theo-cacciomin-with-s}
Let $M$ be a complete  noncompact minimal   hypersurface immersed in a manifold  $\cal{N}$ with nonnegative constant   curvature $c.$  Assume $M$ has finite index.  
Let $\mu\in[2,\alpha_2+1)$ and $\eta>0,$ such that $\eta\mu\geq 1.$ 
Then,  there exists  a compact subset $K$ of $M,$ and a  positive constant $\delta_1$  such that  for any $f\in C^{\infty}_0(M\setminus K),$ one has
\begin{equation}
\label{cacciomin-with-s}
\int_{M\setminus K}f^{2\mu\eta}|A|^{2\mu}\leq \delta_1\int_{M\setminus K} |A|^{2\mu(1-\eta)}|\nabla f|^{2\mu\eta}.
\end{equation}

Moreover, if $\mu\in(\alpha_1+1,\alpha_2+1),$ an inequality analogous to \eqref{cacciomin-with-s} holds with $(M\setminus K)_+$ instead of 
$M\setminus K.$ 
\end{theorem}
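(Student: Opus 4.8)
The plan is to obtain \eqref{cacciomin-with-s} from the minimal Caccioppoli inequality \eqref{jmin-bis} of Theorem \ref{caccio-min-theo} by a change of test function, followed by a Young--absorption argument. Put $x=\mu-1$. Since $\mu\in[2,\alpha_2+1)$ we have $x\in[1,\alpha_2)$, exactly the range in which \eqref{jmin-bis} is available, and because $[1,\alpha_2)\subset(\alpha_1,\alpha_2)$ (as $\alpha_1=1-\sqrt{2/n}<1$) the coefficient ${\mathcal A}$ is positive throughout. Let $K$ be the compact set furnished by Theorem \ref{caccio-min-theo}, so that $M\setminus K$ is stable and \eqref{jmin-bis} holds with left exponent $2x+2=2\mu$ and right exponent $2x=2\mu-2$. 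Since the quantities $f^{2\mu\eta}$ and $|\nabla f|^{2\mu\eta}$ presuppose a nonnegative function, and only nonnegative cutoffs are used in the applications, I take $f\ge 0$.

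First I would insert the test function $g=f^{\mu\eta}$ into \eqref{jmin-bis}. This is exactly where the hypothesis $\eta\mu\ge 1$ is needed: for a nonnegative $f\in C^\infty_0(M\setminus K)$ the function $f^{\mu\eta}$ is Lipschitz with compact support (it is $C^1$ when $\mu\eta>1$), hence an admissible test function, and $|\nabla g|^2=(\mu\eta)^2 f^{2\mu\eta-2}|\nabla f|^2$. Substituting gives
\[
{\mathcal A}\int_{M\setminus K} f^{2\mu\eta}|A|^{2\mu}\le {\mathcal D}(\mu\eta)^2\int_{M\setminus K} |A|^{2\mu-2}f^{2\mu\eta-2}|\nabla f|^2 .
\]

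Next I would split the integrand on the right by Young's inequality. The governing algebraic identity is the pointwise factorization
\[
|A|^{2\mu-2}f^{2\mu\eta-2}|\nabla f|^2=\bigl(f^{2\mu\eta}|A|^{2\mu}\bigr)^{\lambda}\bigl(|A|^{2\mu(1-\eta)}|\nabla f|^{2\mu\eta}\bigr)^{1-\lambda},
\qquad \lambda=1-\tfrac{1}{\mu\eta};
\]
the value of $\lambda$ is forced by matching the power of $|\nabla f|$ (namely $2=2\mu\eta(1-\lambda)$), and one then checks that the powers of $f$ and of $|A|$ also agree. If $\mu\eta=1$ then $\lambda=0$ and \eqref{cacciomin-with-s} drops out of \eqref{jmin-bis} immediately; if $\mu\eta>1$ then $\lambda\in(0,1)$ and Young's inequality with conjugate exponents $1/\lambda$ and $\mu\eta$ yields, for every $\sigma>0$,
\[
|A|^{2\mu-2}f^{2\mu\eta-2}|\nabla f|^2\le \lambda\,\sigma^{1/\lambda}\,f^{2\mu\eta}|A|^{2\mu}+(1-\lambda)\,\sigma^{-\mu\eta}\,|A|^{2\mu(1-\eta)}|\nabla f|^{2\mu\eta}.
\]

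Finally I would integrate and absorb. Because $f$ has compact support and $|A|$ is continuous, the integral $\int_{M\setminus K} f^{2\mu\eta}|A|^{2\mu}$ is finite, so I may choose $\sigma$ small enough that ${\mathcal D}(\mu\eta)^2\lambda\,\sigma^{1/\lambda}<{\mathcal A}$ (possible precisely because ${\mathcal A}>0$) and transfer that term to the left-hand side with a positive coefficient. This gives \eqref{cacciomin-with-s} with the explicit constant $\delta_1={\mathcal D}(\mu\eta)^2(1-\lambda)\sigma^{-\mu\eta}\bigl({\mathcal A}-{\mathcal D}(\mu\eta)^2\lambda\sigma^{1/\lambda}\bigr)^{-1}>0$. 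The last assertion follows verbatim after replacing $M\setminus K$ by $(M\setminus K)_+$ and using the second conclusion of Theorem \ref{caccio-min-theo}, which holds for $x=\mu-1\in(\alpha_1,\alpha_2)$, i.e. $\mu\in(\alpha_1+1,\alpha_2+1)$. The main obstacles I anticipate are purely technical: justifying that $f^{\mu\eta}$ is an admissible test function (the role of $\eta\mu\ge 1$) and carefully verifying the exponent bookkeeping in the factorization; once ${\mathcal A}>0$ and the finiteness of the absorbed integral are in hand, the absorption step is routine.
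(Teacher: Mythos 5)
Your proposal is correct and takes essentially the same route as the paper: both arguments start from \eqref{jmin-bis} with $x=\mu-1$, split the right-hand integrand by Young's inequality with the same conjugate exponents $\mu\eta/(\mu\eta-1)$ and $\mu\eta$, absorb the resulting $f^{2\mu\eta}|A|^{2\mu}$ term using ${\mathcal A}>0$, and invoke the substitution $f\mapsto f^{\mu\eta}$, handling $\mu\eta=1$ separately as the trivial case. The only difference is the order of operations (the paper applies Young's inequality first and replaces $f$ by $f^{\mu\eta}$ at the very end, while you substitute first and then split), which is immaterial.
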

\begin{proof}

When $\eta\mu=1$ inequality \eqref{cacciomin-with-s} is the same as inequality \eqref{jmin-bis} with $\mu=x+1$ and $\delta_1=\frac{\mathcal D}{\mathcal A}.$  Then assume $\mu\eta>1.$  
 Let $\mu=x+1$ in inequality \eqref{jmin-bis}  and   apply Young's inequality to the second integrand  of inequality \eqref{jmin-bis} on $M'$ as follows ($y\geq 0$) 

\begin{align*}
|A|^{2(\mu-1)}|\nabla f|^2&=f^2\left[|A|^{2(\mu-1)}\frac{|\nabla f|^2}{f^2}\right]=f^2\left[|A|^{2(\mu-1-y)}\frac{|A|^{2y}|\nabla f|^2}{f^2}\right]\\
&\leq f^2\left[\varepsilon |A|^{2(\mu-1-y)t}+\frac{1}{\varepsilon}\frac{|A|^{2ys}|\nabla f|^{2s}}{f^{2s}}\right]\\
\end{align*}

with $t=\frac{\mu}{\mu-1-y},$ $s=\mu\eta,$ $\eta=\frac{1}{y+1}.$

 Then replacing the previous inequality in inequality \eqref{jmin-bis} one has
 
 \begin{equation*}
 ({\mathcal A}-\varepsilon {\mathcal D})\int_{(M\setminus K)\cap M'} f^{2}|A|^{2\mu}\leq \frac{{\mathcal D}}{\varepsilon}\int_{(M\setminus K)\cap M'} |A|^{2\mu(\eta-1)}\frac{|\nabla f|^{2\mu\eta}}{f^{2\mu\eta-2}}
 \end{equation*}
 
Replacing $f$ by $f^{\mu\eta},$ one has inequality \eqref{cacciomin-with-s}, with $\delta_1=\frac{\mathcal D}{\varepsilon({\mathcal A}-\varepsilon{\mathcal D})}.$

\end{proof}

\begin{theorem} 
\label{theo-caccioH-with-s}
Let $M$ be a  complete noncompact hypersurface immersed with constant mean curvature $H$ in a manifold $\cal{N}$ with constant  curvature $c.$
Assume $M$ has finite index  and $n\leq 5.$
Then, there exists a compact $K\subset M$ and a positive constant $\delta_2$ such that
for any  $s\geq 1 $ and  any $f\in C^{\infty}_0(M\setminus K),$  one has

\begin{equation}
\label{caccioH-with-s}
\int_{M\setminus K}f^{2s}\varphi^{2x}\leq \delta_2\int_{M\setminus K} \varphi^{2x}|\nabla f|^{2s}
\end{equation}

provided either 

(1)  $c\geq0,$  $x\in[1,x_2),$

or 

(2)   $c=-1,$  $\varepsilon>0,$  $x\in[1, x_2-\varepsilon],$ $H^2\geq g(x).$

Moreover if $n\leq 6$ and $x\in (x_1,x_2)$ in (1) (respectively $x\in [x_1+\varepsilon,x_2-\varepsilon]$ in (2)) 
 an  inequality analogous to \eqref{j-bis} holds with $(M\setminus K)_+$ instead of 
$M\setminus K.$ 

\end{theorem}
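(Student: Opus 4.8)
The plan is to derive \eqref{caccioH-with-s} from the Caccioppoli inequality of type III already proved in Theorem \ref{caccio-H}, in exactly the same spirit as Theorem \ref{theo-cacciomin-with-s} was derived from \eqref{jmin-bis} in the minimal case. Under hypotheses (1) or (2) with $n\le 5$, Theorem \ref{caccio-H} furnishes a compact set $K$ with $M\setminus K$ stable, a positive constant $\gamma$ (the minimum over $\varphi\ge 0$ of the quadratic $\beta_7={\mathcal A}\varphi^2+{\mathcal B}H\varphi+{\mathcal C}(H^2+c)$, whose positivity is the content of Table 1), and the inequality $\gamma\int_{M\setminus K}f^2\varphi^{2x}\le{\mathcal D}\int_{M\setminus K}\varphi^{2x}|\nabla f|^2$ for every $f\in C_0^\infty(M\setminus K)$. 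For $s=1$ this is already \eqref{caccioH-with-s} with $\delta_2={\mathcal D}/\gamma$, so I may assume $s>1$, and since both sides of \eqref{caccioH-with-s} depend only on $|f|$ it suffices to treat $f\ge 0$.

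For $s>1$, first I would substitute $f^s$ for $f$ in \eqref{j-bis}. Using $|\nabla f^s|^2=s^2 f^{2(s-1)}|\nabla f|^2$, this gives $\gamma\int f^{2s}\varphi^{2x}\le {\mathcal D}s^2\int\varphi^{2x}f^{2(s-1)}|\nabla f|^2$, all integrals over $M\setminus K$. The essential point, and the feature that makes the non-minimal case \emph{simpler} than Theorem \ref{theo-cacciomin-with-s}, is that the power $2x$ of $\varphi$ is the same on both sides, so a one-parameter Young's inequality suffices instead of the two-parameter splitting used there. Concretely I would use the factorisation $\varphi^{2x}f^{2(s-1)}|\nabla f|^2=(\varphi^{2x}f^{2s})^{\frac{s-1}{s}}(\varphi^{2x}|\nabla f|^{2s})^{\frac{1}{s}}$ and apply Young's inequality with conjugate exponents $\frac{s}{s-1}$ and $s$, obtaining, for every $\tau>0$, the pointwise bound $\varphi^{2x}f^{2(s-1)}|\nabla f|^2\le \frac{s-1}{s}\tau^{\frac{s}{s-1}}\varphi^{2x}f^{2s}+\frac{1}{s\,\tau^{s}}\varphi^{2x}|\nabla f|^{2s}$.

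Inserting this estimate, the integral $\int\varphi^{2x}f^{2s}$ occurs on both sides, so I would choose $\tau$ small enough that ${\mathcal D}s(s-1)\tau^{\frac{s}{s-1}}<\gamma$ and reabsorb that term into the left-hand side. This subtraction is legitimate because $\int_{M\setminus K}f^{2s}\varphi^{2x}$ is finite, $f$ having compact support and $\varphi$ being smooth hence bounded on compact sets. The result is $(\gamma-{\mathcal D}s(s-1)\tau^{\frac{s}{s-1}})\int f^{2s}\varphi^{2x}\le \frac{{\mathcal D}s}{\tau^{s}}\int\varphi^{2x}|\nabla f|^{2s}$, which is \eqref{caccioH-with-s} with the explicit constant $\delta_2=\frac{{\mathcal D}s/\tau^{s}}{\gamma-{\mathcal D}s(s-1)\tau^{s/(s-1)}}>0$. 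For the final assertion on $(M\setminus K)_+$ I would run the identical argument starting from the form of \eqref{j-bis} valid on $(M\setminus K)_+$, relying on the positivity of $\gamma$ in the wider range $x\in(x_1,x_2)$ (respectively $x\in[x_1+\varepsilon,x_2-\varepsilon]$) available for $n\le 6$, exactly as in Theorem \ref{caccio-H}.

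I expect no genuine obstacle: the whole proof is a single Young's inequality followed by reabsorption. The only points demanding care are confirming $\gamma>0$ under (1)–(2) — which is precisely Theorem \ref{caccio-H} together with the sign analysis in Table 1 — and checking the finiteness that permits the reabsorption, so that the passage to the combined inequality is valid.
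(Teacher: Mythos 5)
Your proposal is correct and follows essentially the same route as the paper: both derive \eqref{caccioH-with-s} from \eqref{j-bis} by a Young's inequality with conjugate exponents $\frac{s}{s-1}$ and $s$, reabsorption of the $\int f^{2s}\varphi^{2x}$ term, and the substitution $f\mapsto f^s$; the only (immaterial) difference is that you substitute $f^s$ first and then apply Young, while the paper applies Young to $\varphi^{2x}|\nabla f|^2$ by splitting the powers of $\varphi$ and substitutes $f^s$ at the end.
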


\begin{proof}  When $s=1,$ inequality \eqref{caccioH-with-s} is the same as \eqref{j-bis} with $\delta_2=\frac{\mathcal D}{\gamma}.$ 
Then, assume $s>1$ and  apply  Young's inequality to the second integrand of inequality \eqref{j-bis}  on $M'$ as follows ($y\geq 0$) 

\begin{align*}
\varphi^{2x}|\nabla f|^2&=f^2\left[\varphi^{2x}\frac{|\nabla f|^2}{f^2}\right]=f^2\left[\varphi^{2(x-y)}\frac{\varphi^{2y}|\nabla f|^2}{f^2}\right]\\
&\leq f^2\left[\varepsilon \varphi^{2(x-y)t}+\frac{1}{\varepsilon}\frac{\varphi^{2ys}|\nabla f|^{2s}}{f^{2s}}\right]\\
\end{align*}

with $t=\frac{x}{x-y},$ $s=\frac{x}{y}.$

 Then replacing the previous inequality in \eqref{j-bis} one has 
 
 \begin{equation*}
 (\gamma-\varepsilon {\mathcal D})\int_{(M\setminus K)\cap M'} f^{2}\varphi^{2x}\leq \frac{{\mathcal D}}{\varepsilon}\int _{(M\setminus K)\cap M'}\varphi^{2x}\frac{|\nabla f|^{2s}}{f^{2s-2}}
 \end{equation*}
 
Replacing $f$ by $f^s,$ one has the result with $\delta_2=\frac{\mathcal D}{\varepsilon(\gamma-\varepsilon{\mathcal D})}.$
\end{proof}

\section{Applications  of the   Caccioppoli's  inequalities in the stable case}
\label{applications}

In this section we assume that $M$ is stable and we discuss  some consequences of Caccioppoli's inequality of type III. 
As the literature on the subject is wide and broken up, we  will compare our results with the old ones that we are aware of.

Notice that, when $M$ is   stable, all the results of the previous Sections hold taking  the compact subset 
$K=\emptyset.$  We split the discussion about the consequences of Caccioppoli's inequality  into two parts.  
First we deal with minimal hypersurfaces in a manifold of nonnegative constant curvature. We give conditions  on the total curvature, which ensure  that the hypersurface is totally geodesic.  Then we deal with hypersurfaces with constant mean curvature $H\not=0,$ in  $\mathbb{R}^{n+1}$, $\mathbb{S}^{n+1}$ and $\mathbb{H}^{n+1}.$  We give nonexistence results, provided some conditions on the total curvature are satisfied.
It will be clear in the following that all our results hold  when $\int_M \varphi^{p}$  is finite, for  suitable $p$ (see Remark \ref{finite-total}). 
We restrict ourselves to the complete noncompact case since in  $\mathbb{R}^{n+1},$ $\mathbb{S}^{n+1}$ and $\mathbb{H}^{n+1},$ the only weakly stable compact hypersurfaces of constant mean curvature are geodesic spheres \cite{BDE}.  

We recall that the  classification of  stable  constant mean curvature surfaces in $\mathbb{R}^{3},$ $\mathbb{S}^{3}$ and $\mathbb{H}^{3}$  is completely known. 
Stable, complete, orientable, minimal surfaces in ${\mathbb R}^3$ are planes, as it was proved  independently by  
 M. do Carmo and C.K. Peng \cite{DP}, D. Fischer-Colbrie and R. Schoen \cite{FCS} and A. V. Pogorelov \cite{Po}.  Later, A. Ros \cite{R} proved that 
there are no nonorientable stable minimal surfaces in ${\mathbb  R}^3.$ Finally, F. Lopez and A. Ros \cite{LR} proved that
weakly stable,  complete, noncompact,  constant mean curvature surfaces in ${\mathbb R}^3$ are planes.  


Let us now deal with the spherical case. There is no stable complete
minimal surface in ${\mathbb S}^3,$ as it can be deduced by  using  Theorem 4 in \cite{LR}  and   Theorem 5.1.1 in  \cite{Si}. 
 Later, K. Frensel  \cite {Fr} proved that there is no weakly stable complete
noncompact surface of constant mean curvature in ${\mathbb S}^3.$


Finally, in ${\mathbb H}^3$ one has the following results. In \cite{S}, da Silveira  proved that, in ${\mathbb H}^3,$ there are no weakly stable complete, noncompact surfaces with constant mean curvature $H\geq1$ except horospheres, while there are many examples of  weakly stable, complete surfaces with constant mean curvature $H\in(0,1).$ 
Furthermore, G. de Oliveira and the third author  
\cite{DS} found many examples of stable minimal  surfaces in ${\mathbb H}^3.$ 

It is proved in \cite{ENR}, \cite{X} and  \cite{Q} 
that, for $n=3,4,$  in ${\mathbb R}^{n+1}$ (respectively   in ${\mathbb H}^{n+1}$)
there is no finite index, complete,  noncompact hypersurface with constant mean curvature $H\not=0$ (respectively $H$ large enough). 
The analogous  problem  in higher dimension is still open. We give a partial answer to it, assuming $n\leq 5$ and some growth condition on 
$\int_M\varphi^{2x},$ for suitable $x.$ We observe that the arguments we use are of different nature from those used in  \cite{ENR} and do not allow us to deduce their results.
We also observe that  very little is known about noncompact stable hypersurface with constant mean curvature in the sphere ${\mathbb S}^{n+1},$ $n>2$ (see for instance  \cite{AD2}, where a nonexistence result is obtained under the assumption of   polynomial volume growth). 
Nevertheless, we obtain some results in this case, as well.

In the following, $B_R$ denotes,  as before, the geodesic ball in $M$ of radius $R.$ 

We start by studying some  consequences of Caccioppoli's inequality  for   $H=0.$ The first  result  is a consequence of Theorem \ref{theo-cacciomin-with-s}.

\begin{corollary}
\label{docarmo-peng-generalized}
Let $M$ be a complete noncompact minimal stable hypersurface immersed in a manifold with nonnegative constant curvature.  Assume that, for $\mu\in[2,\alpha_2+1),$ $\eta>0,$ $\eta\mu\geq1$ 

\begin{equation}\label{limit-mu}
\lim_{R\longrightarrow\infty}\frac{\int_{B_{2R}\setminus B_R} |A|^{2\mu(1-\eta)}}{R^{2\eta\mu}}=0.
\end{equation}

Then $M$ is totally geodesic. 

\end{corollary}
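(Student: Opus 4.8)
The plan is to apply Theorem \ref{theo-cacciomin-with-s} with a suitable choice of cutoff function and then exploit the growth hypothesis \eqref{limit-mu} to force the relevant integral to vanish. Since $M$ is minimal and stable, Theorem \ref{theo-cacciomin-with-s} applies with $K=\emptyset$, so for the given $\mu\in[2,\alpha_2+1)$ and $\eta>0$ with $\eta\mu\geq 1$ we have, for every $f\in C_0^\infty(M)$,
\begin{equation*}
\int_M f^{2\mu\eta}|A|^{2\mu}\leq \delta_1\int_M |A|^{2\mu(1-\eta)}|\nabla f|^{2\mu\eta}.
\end{equation*}

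First I would fix a radius $R$ and choose a standard logarithmic-type or piecewise-linear radial cutoff $f=f_R$ with $f_R\equiv 1$ on $B_R$, $f_R\equiv 0$ on $M\setminus B_{2R}$, and $|\nabla f_R|\leq C/R$ on the annulus $B_{2R}\setminus B_R$. Since $\nabla f_R$ is supported on that annulus, the right-hand side is controlled by
\begin{equation*}
\delta_1\left(\frac{C}{R}\right)^{2\mu\eta}\int_{B_{2R}\setminus B_R}|A|^{2\mu(1-\eta)}=\delta_1 C^{2\mu\eta}\,\frac{\int_{B_{2R}\setminus B_R}|A|^{2\mu(1-\eta)}}{R^{2\mu\eta}},
\end{equation*}
while the left-hand side dominates $\int_{B_R}|A|^{2\mu}$. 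Hypothesis \eqref{limit-mu} says precisely that the right-hand quantity tends to $0$ as $R\to\infty$. Letting $R\to\infty$ therefore yields $\int_M|A|^{2\mu}=0$, whence $|A|\equiv 0$ and $M$ is totally geodesic.

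The one point requiring care is the exponent bookkeeping on the gradient term: the power of $|A|$ appearing there is $2\mu(1-\eta)$, which is exactly the exponent in \eqref{limit-mu}, so the hypothesis is tailored to make the annular integral divided by $R^{2\mu\eta}$ vanish. I expect the main obstacle to be verifying that the left-hand side genuinely controls $\int_{B_R}|A|^{2\mu}$ uniformly in $R$ — that is, that the constant $\delta_1$ and the positivity of $\mathcal{A}$ are independent of the cutoff and of $R$ — and that the admissible range $\mu\in[2,\alpha_2+1)$ together with $\eta\mu\geq 1$ is compatible with the existence of a valid $(\mu,\eta)$ pair for the given $n$. Once these are checked, the conclusion follows by the monotone convergence theorem applied as $R\to\infty$.
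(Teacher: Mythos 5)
Your proposal is correct and follows essentially the same route as the paper's own proof: plug the standard annular cutoff ($f\equiv 1$ on $B_R$, vanishing outside $B_{2R}$, gradient bounded by $C/R$) into the Caccioppoli inequality of Theorem \ref{theo-cacciomin-with-s} with $K=\emptyset$ (stability), bound the right-hand side by $\delta_1 C^{2\mu\eta}R^{-2\mu\eta}\int_{B_{2R}\setminus B_R}|A|^{2\mu(1-\eta)}$, and let $R\to\infty$ using hypothesis \eqref{limit-mu} to conclude $|A|\equiv 0$. Your worries about uniformity are unfounded but harmless: $\delta_1$ comes from Theorem \ref{theo-cacciomin-with-s} and depends only on $n$, $\mu$, $\eta$ and the chosen Young parameter, not on the cutoff or on $R$.
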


\begin{remark}

\label{power-lim}
Before proving Corollary \ref{docarmo-peng-generalized} we observe that,   in the denominator of \eqref{limit-mu}, one can take any power of $R$ smaller than $\eta\mu.$ In fact, for any $s\leq \eta\mu,$ one has 
\begin{equation*}
\frac{\int_{B_{2R}\setminus B_R} |A|^{2\mu(1-\eta)}}{R^{{2\mu\eta}}}\leq 
\frac{\int_{B_{2R}\setminus B_R} |A|^{2\mu(1-\eta)}}{R^{2s}}
\end{equation*}
\end{remark}

\begin{proof}[Proof of Corollary \ref{docarmo-peng-generalized}.]  Let $f\in C_0(M)$ such that $f\equiv 1$ on $B_R,$ $f\equiv 0$ on $B_{2R}\setminus B_R$ and $|\nabla f|\leq \frac{1}{R}.$ Replacing such $f$ in inequality  \eqref{cacciomin-with-s} yields, for any $\mu\in[2,\alpha_2+1),$ $\eta>0,$ $\eta\mu\geq 1$

\begin{equation*}
\int_{B_R}|A|^{2\mu}\leq \frac {\delta_1}{R^{2\eta\mu}}\int_{B_{2R}\setminus B_R}|A|^{2\mu(1-\eta)}
\end{equation*}

By hypothesis the second term in the previous inequality tends to zero as $R$ tends to infinity. Hence $|A|\equiv 0$ on $M$ and $M$ is totally geodesic.

\end{proof}

\begin{remark} 
\label{docarmo-peng}
\nero Notice that taking $\eta\mu=1$ in Corollary \ref{docarmo-peng-generalized} yields  that if,  for $x\in[1,\alpha_2),$  one has

\begin{equation*}
\limsup_{R\longrightarrow\infty}\frac{\int_{B_{2R}\setminus B_R} |A|^{2x}}{R^2}=0, 
\end{equation*}

then $M$ is  totally geodesic. 

\nero Corollary  \ref{docarmo-peng-generalized} is a generalization of  the result   by M. do Carmo and C. K. Peng,  stated in Theorem 1.3 of \cite{DP1}, for $\mathcal{N}=\mathbb R^{n+1}$, that is:  if there exists  
$ t\in(0,2\alpha_2)$ such that

 \begin{equation*}
\limsup_{R\longrightarrow\infty}\frac{\int_{B_{2R}\setminus B_R} |A|^2}{R^{t}}=0
\end{equation*}
then $M$ is totally geodesic. 
In fact,  this follows by taking    $\mu(1-\eta)=1,$ $t=2\eta\mu=2(\mu-1)\in(2,2\alpha_2)$ in   Corollary  
\ref{docarmo-peng-generalized}.  Then we can extend the range of the power  of $R$ in the denominator to  $(0,2\alpha_2),$ as in Remark  
\ref{power-lim}.
  \end{remark}

Now we state a particular case of  Corollary \ref{docarmo-peng-generalized}, which is 
a generalization to higher dimension of Theorem 2  in   \cite{LWe} by H. Li and G. Wei. Our generalization is different
from the one conjectured by  H. Li and G. Wei  for dimesion $n>3.$

\begin{corollary}
\label{li-wei-generalized}
Let $M$ be a complete  noncompact stable minimal  hypersurface immersed in a manifold with nonnegative constant  curvature  and let $n\leq 7.$   
 If there exists $t\in(2\alpha_2-1),$ such that
   
\begin{equation*}
\lim_{R\longrightarrow\infty}\frac{\int_{B_{2R}\setminus B_R} |A|^{3}}{R^t}=0,
\end{equation*}
  
then $M$ is totally geodesic. 

\end{corollary}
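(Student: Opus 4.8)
The plan is to deduce this statement directly from Corollary \ref{docarmo-peng-generalized} (together with Remark \ref{power-lim}) by choosing the free parameters $\mu$ and $\eta$ so that the integrand in \eqref{limit-mu} becomes exactly $|A|^3$. Since the exponent of $|A|$ in \eqref{limit-mu} is $2\mu(1-\eta)$, first I would impose $2\mu(1-\eta)=3$, i.e. $\mu(1-\eta)=\frac{3}{2}$. With this choice the exponent of $R$ in the denominator of \eqref{limit-mu} becomes $2\mu\eta=2\mu-3$, so the hypothesis of Corollary \ref{docarmo-peng-generalized} reads $\lim_{R\to\infty} R^{-(2\mu-3)}\int_{B_{2R}\setminus B_R}|A|^3=0$.

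Next I would verify that the constraints on $(\mu,\eta)$ required by Corollary \ref{docarmo-peng-generalized}, namely $\mu\in[2,\alpha_2+1)$ and $\eta\mu\geq 1$, can be met. From $\mu\eta=\mu-\frac{3}{2}$ the condition $\eta\mu\geq 1$ is equivalent to $\mu\geq\frac{5}{2}$, so the admissible values are $\mu\in[\frac{5}{2},\alpha_2+1)=[\frac{5}{2},\,2+\sqrt{\frac{2}{n}})$. This interval is nonempty precisely when $\frac{5}{2}<2+\sqrt{\frac{2}{n}}$, that is $\sqrt{\frac{2}{n}}>\frac{1}{2}$, which holds if and only if $n\leq 7$. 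This is exactly the dimensional restriction in the statement, and is the one place where the hypothesis $n\leq 7$ is used.

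As $\mu$ runs through $[\frac{5}{2},\,2+\sqrt{\frac{2}{n}})$ the exponent $2\mu-3$ runs through $[2,\,2\alpha_2-1)$, since $2(2+\sqrt{\frac{2}{n}})-3=1+2\sqrt{\frac{2}{n}}=2\alpha_2-1$. Hence, given any $t<2\alpha_2-1$, I would pick $\mu$ with $2\mu-3\geq t$ (taking $\mu=\frac{5}{2}$ when $t\leq 2$ and $\mu=\frac{t+3}{2}$ otherwise), so that $t\leq 2\mu\eta$. By Remark \ref{power-lim}, the vanishing of $R^{-t}\int_{B_{2R}\setminus B_R}|A|^3$ forces the vanishing of $R^{-2\mu\eta}\int_{B_{2R}\setminus B_R}|A|^{2\mu(1-\eta)}$, which is precisely hypothesis \eqref{limit-mu} for this pair $(\mu,\eta)$. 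Corollary \ref{docarmo-peng-generalized} then yields that $M$ is totally geodesic.

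I expect the only real subtlety to be the bookkeeping of the two exponents: the power $2\mu(1-\eta)$ on $|A|$ and the power $2\mu\eta$ on $R$ are linked through the single relation $\mu(1-\eta)=\frac{3}{2}$, so one must check that the admissible window for $\mu$ simultaneously produces the prescribed integrand $|A|^3$ and a power of $R$ reaching up to $2\alpha_2-1$. Everything else — the insertion of a radial cutoff $f$ with $f\equiv 1$ on $B_R$, $f\equiv 0$ outside $B_{2R}$ and $|\nabla f|\leq \frac{1}{R}$ — is already carried out inside the proof of Corollary \ref{docarmo-peng-generalized} and need not be repeated.
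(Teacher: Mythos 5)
Your proposal is correct and follows essentially the same route as the paper: both set $2\mu(1-\eta)=3$ in Corollary \ref{docarmo-peng-generalized} so that $t=2\mu\eta=2\mu-3$, both trace the restriction $n\leq 7$ to the nonemptiness of the admissible interval $[\tfrac{5}{2},\alpha_2+1)$ for $\mu$, and both invoke Remark \ref{power-lim} to cover the exponents $t$ below $2$. Your write-up is, if anything, slightly more explicit than the paper's in spelling out the choice of $\mu$ for each value of $t$.
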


\begin{proof} In Corollary  \ref{docarmo-peng-generalized}, we take $2\mu(1-\eta)=3$ and define $t=2\mu\eta=2\mu-3.$ Then, in order to apply  Corollary  
\ref{docarmo-peng-generalized} one has to assume 
$2\leq t<2\alpha_2-1.$  Notice that $2<2\alpha_2-1$ if and only if $n\leq7.$  Now, as in Remark \ref{power-lim}, we extend the result to any  value 
$ t\in(0,2\alpha_2-1).$

\end{proof}

Now we deal with the case of constant mean curvature $H\not=0.$

The following result 
answers to a do Carmo's question in a particular case (see pg. 133 in \cite{Do}).

\begin{corollary}
\label{coro-docarmo-peng-H}
There is no complete noncompact stable  hypersurface $M$ with constant mean curvature $H$ in ${\mathcal N}={\mathbb R}^{n+1},$ 
$\mathbb S^{n+1}$  or 
 ${\mathbb H}^{n+1},$ $n\leq 5,$  provided there exists $s\geq1 $ such that

\begin{equation}\label{integral-polynomial}
\limsup_{R\longrightarrow\infty}\frac{\int_{B_{2R}\setminus B_R} \varphi^{2x}}{R^{2s}}=0
\end{equation}
and  either 

(1) ${\mathcal N}={\mathbb R}^{n+1}$  or $\mathbb S^{n+1},$   $H\not=0,$   $x\in[1, x_2),$

or  

(2) ${\mathcal N}={\mathbb H}^{n+1},$ $\varepsilon>0,$  $x\in[1,x_2-\varepsilon],$  $H^2>g_n(x).$  \end{corollary}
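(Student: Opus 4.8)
The plan is to feed the refined Caccioppoli inequality of type III (Theorem \ref{theo-caccioH-with-s}) a standard family of radial cut-off functions and let the radius tend to infinity, forcing $\varphi$ to vanish identically; the contradiction then comes from the classification of totally umbilical hypersurfaces in the three space forms. Since $M$ is stable it has index zero, hence finite index, and as recalled at the beginning of this section all the results of the previous sections apply with the compact set $K=\emptyset.$ The ambient spaces $\mathbb{R}^{n+1},$ $\mathbb{S}^{n+1},$ $\mathbb{H}^{n+1}$ have constant curvature $c=0,1,-1,$ so the hypotheses of case (1) ($c\geq 0,$ $x\in[1,x_2)$) cover $\mathbb{R}^{n+1}$ and $\mathbb{S}^{n+1},$ while case (2) ($c=-1,$ $x\in[1,x_2-\varepsilon],$ $H^2\geq g_n(x)$) covers $\mathbb{H}^{n+1}.$ In either situation Theorem \ref{theo-caccioH-with-s} gives, for the prescribed $x$ and for every $s\geq 1$ and $f\in C^\infty_0(M),$ the inequality
\begin{equation*}
\int_M f^{2s}\varphi^{2x}\leq \delta_2\int_M \varphi^{2x}|\nabla f|^{2s}.
\end{equation*}

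Next I would fix $s\geq 1$ as in the statement and take $f$ radial with $f\equiv 1$ on $B_R,$ $f\equiv 0$ on $M\setminus B_{2R},$ and $|\nabla f|\leq 1/R.$ Since $\nabla f$ is supported in the annulus $B_{2R}\setminus B_R,$ the displayed inequality reduces to
\begin{equation*}
\int_{B_R}\varphi^{2x}\leq \frac{\delta_2}{R^{2s}}\int_{B_{2R}\setminus B_R}\varphi^{2x}.
\end{equation*}
By hypothesis \eqref{integral-polynomial} the right-hand side tends to $0$ as $R\to\infty.$ Because $R\mapsto\int_{B_R}\varphi^{2x}$ is nondecreasing, fixing any $R_0$ and letting $R\to\infty$ yields $\int_{B_{R_0}}\varphi^{2x}\leq 0,$ whence $\varphi\equiv 0$ on $M;$ that is, $M$ is totally umbilical.

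It then remains to exclude a complete, noncompact, totally umbilical hypersurface with the prescribed mean curvature. In $\mathbb{R}^{n+1}$ a totally umbilical hypersurface with $H\neq 0$ is a round sphere, hence compact, contradicting noncompactness, and in $\mathbb{S}^{n+1}$ every complete totally umbilical hypersurface is a geodesic sphere, again compact. In $\mathbb{H}^{n+1}$ the totally umbilical hypersurfaces with $0<H\leq 1$ (equidistant hypersurfaces and horospheres) are noncompact, so here the curvature hypothesis must genuinely be used, and I expect this to be the one delicate point. I would observe that on the interval $(x_1,x_2)$ the denominator of $g_n$ is exactly the region where ${\mathcal B}^2-4{\mathcal A}{\mathcal C}<0,$ hence positive, and that
\begin{equation*}
g_n(x)-1=\frac{(\mu-1)\,x^4}{(2x-\gamma)^2-\mu x^4},\qquad \mu-1=\frac{(n-2)^2}{4(n-1)}\geq 0,
\end{equation*}
so that $g_n(x)\geq 1$ throughout $[1,x_2-\varepsilon]\subset(x_1,x_2)$ (this interval lies in $(x_1,x_2)$ precisely because $n\leq 5$ forces $x_1<1<x_2$). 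Consequently $H^2>g_n(x)\geq 1,$ i.e. $H>1,$ and a totally umbilical hypersurface of $\mathbb{H}^{n+1}$ with $H>1$ is a geodesic sphere, hence compact — the required contradiction in all three cases.
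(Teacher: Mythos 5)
Your proposal is correct and follows essentially the same route as the paper: plug radial cut-offs with $|\nabla f|\leq 1/R$ into Theorem \ref{theo-caccioH-with-s} (with $K=\emptyset$ by stability), let $R\to\infty$ using \eqref{integral-polynomial} to force $\varphi\equiv 0$, and then rule out complete noncompact totally umbilical hypersurfaces in each space form. Your only addition is the explicit computation $g_n(x)-1=\frac{(\mu-1)x^4}{(2x-\gamma)^2-\mu x^4}\geq 0$ on $(x_1,x_2)$, which nicely justifies the inequality $g_n(x)\geq 1$ that the paper invokes without proof in the hyperbolic case.
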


Before proving Corollary \ref{coro-docarmo-peng-H}, it is worthwhile to notice the following. 
Reasoning as in Remark \ref{power-lim} we can take any power of $R$ between zero and $\infty,$ in the denominator of 
\eqref{integral-polynomial}. This means that, in the hypothesis of 
Corollary \ref{coro-docarmo-peng-H},
$\int_{B_{2R}\setminus B_R} \varphi^{2x}$ can not be polynomial in $R.$

\begin{proof}[Proof of Corollary \ref{coro-docarmo-peng-H}.]
We use the same method as in the proof of Corollary \ref{docarmo-peng-generalized}, starting  with \eqref{caccioH-with-s} instead of \eqref{cacciomin-with-s}.
 Then we obtain that $\varphi\equiv0$ on $M,$ which  means that $M$ is totally umbilic. In case (1), it follows that  $M$ is contained either in a sphere or in a hyperplane. When ${\mathcal N}={\mathbb R}^{n+1},$ as  $M$ is complete  noncompact, then $M$ is a hyperplane and  $H=0.$  When ${\mathcal N}={\mathbb S}^{n+1},$   $M$ is a complete subset of a sphere, hence it is compact.  
 In case (2), it follows that $M$ is contained either in a sphere, or in a horosphere, or in a equidistant sphere. The inequality $H^2>g_n(x)\geq 1$ yields that $M$ can be only contained in a sphere. 
 As $M$ is complete and noncompact, this is a contradiction.

\end{proof}
\begin{remark}
\label{big-remark}
\nero The proof of Corollary \ref{coro-docarmo-peng-H} yields a result in  more general ambient manifolds. In fact, under the same conditions, if ${\mathcal N}$ 
has constant  curvature  and  is not necessarily simply connected, then $M$ is totally umbilical. 

\nero  Taking $s=1$ in Corollary \ref{coro-docarmo-peng-H}, one has that 
there is no complete noncompact stable  hypersurface $M$ with constant mean curvature $H$ in ${\mathcal N}={\mathbb R}^{n+1},$  
$\mathbb S^{n+1}$ or 
 ${\mathcal N}={\mathbb H}^{n+1},$ $n\leq 5,$  provided 

\begin{equation}\label{integral-polynomial-two}
\lim_{R\longrightarrow\infty}\frac{\int_{B_{2R}\setminus B_R} \varphi^{2x}}{R^{2}}=0
\end{equation}
and provided either 
(1) ${\mathcal N}={\mathbb R}^{n+1}$ or  $\mathbb S^{n+1},$ $H\not=0,$   $x\in[1, x_2),$
or  
(2) ${\mathcal N}={\mathbb H}^{n+1},$   $\varepsilon>0,$  $x\in[1,x_2-\varepsilon],$  $H^2>g_n(x).$ 
Also in this case, if ${\mathcal N}$ 
has constant curvature but it is not simply connected, then $M$ is totally umbilical. 

\nero  Taking    $x=1$ in  (1) of Corollary  \ref{coro-docarmo-peng-H}, we improve  the result   by H. Alencar and M. do Carmo,  stated in Theorem 4 of \cite{AD1}.  Furthermore,  M. do Carmo and D. Zhou  \cite{DZ1} stated a result  weaker than  (1) of Corollary \ref{coro-docarmo-peng-H} and, in their proof, they use wrongly Young's inequality (see equation (3.7) there).

\end{remark}











\begin{remark}
\label{finite-total} 
As we said before, many of the results of this article  apply to hypersurfaces $M$ such that  $\int_M\varphi^p<\infty,$ for suitable $p.$  
As an example we use Theorem \ref{reduction-exponent} in order to prove a result of L. F. Cheung, D. Zhou \cite{CZ} in a more direct and general form than the one contained in \cite{CZ}.  In fact, one can easily prove that for $n=3,4,5,$ in a simply connected manifold of constant  curvature $c$, any complete, weakly stable hypersurface $M$ with constant mean curvature satisfying $H^{2}+c>0$ and $\int_M\varphi^2<\infty,$  is a geodesic sphere. Indeed, Theorem \ref{reduction-exponent} ($q=0$)  yields $\int_M\varphi^4<\infty$  and  by Cauchy-Schwarz inequality, one has  $\int_M\varphi^3<\infty$ (since by hypothesis  $\int_M\varphi^2<\infty$). Using again Theorem \ref{reduction-exponent} ($q=\frac{1}{2}$), one obtains  $\int_M\varphi^5<\infty$. Then we apply Theorem 6.2 of \cite {BS} to derive the compactness of $M$. To conclude, we observe that a compact weakly  stable hypersurface of constant mean curvature  in a simply connected  space form is a geodesic sphere (see for instance \cite{BDE}).
\end{remark}

\subsection*
{Acknowledgements and Funds}
The second author would like to thank the {\it  Laboratoire
de math\'ematiques et physique th\'eorique} of Tours, where this article was prepared.
We also wish to thank Pierre B\'erard for pointing out to us  reference \cite{De}.

The first author was partially supported by the ANR through FOG 
project (ANR-07-BLA-0251-01).  The second author was partially supported by MIUR: PRIN 2007 Sottovariet\' a, strutture speciali e geometria
asintotica.

\textsc{Said Ilias}

{\em Universit\'e F. Rabelais, D\'ep. de Math\'ematiques, Tours

ilias@univ-tours.fr}

\textsc{Barbara Nelli}

{\em Dipartimento di Matematica Pura e Applicata, 

Universit\'a dell'Aquila

nelli@univaq.it}

\textsc{Marc Soret}

{\em Universit\'e F. Rabelais, D\'ep. de Math\'ematiques, Tours

 marc.soret@lmpt.univ-tours.fr}

 \end{document}